\numberwithin{equation}{section}
\theoremstyle{plain}
\newtheorem{thm}{Theorem}[section]
\newtheorem{lem}[thm]{Lemma}
\newtheorem{prop}[thm]{Proposition}
\newtheorem{propt}[thm]{Property}
\newtheorem{cor}[thm]{Corollary}
\newcommand{\Mp}{\widetilde{\Sp}}
\newtheorem*{hyp}{Working Hypothesis on char $p$ case}
\theoremstyle{definition}
\newtheorem{rem}[thm]{Remark}
\newtheorem{defn}[thm]{Definition}
\newcommand{\cl}[1]{\widetilde{#1}}
\newcommand{\pair}[1]{\langle #1 \rangle}
\renewcommand{\1}{{\bf 1}}
\newcommand{\tr}{\mathrm{tr}}
\def\cha{\operatorname{char}}
\def\disc{\operatorname{disc}}
\def\Hom{\operatorname{Hom}}
\def\ind{\operatorname{ind}}
\def\Ind{\operatorname{Ind}}
\def\Irr{\operatorname{Irr}}
\def\Re{\operatorname{Re}}
\def\Res{\operatorname{Res}}
\def\temp{\operatorname{temp}}
\def\beq{\begin{equation}}
\def\eeq{\end{equation}}
\def\beqn{\begin{equation*}}
\def\eeqn{\end{equation*}}
\def\beqna{\begin{eqnarray}}
\def\eeqna{\end{eqnarray}}
\def\beqnan{\begin{eqnarray*}}
\def\eeqnan{\end{eqnarray*}}
\def\wt{\widetilde}
\def\rm{\mathrm}
\def\mc{\mathcal}
\def\mf{\mathfrak}
\def\bs{\backslash}
\def\an{\mathrm{an}}
\def\GL{\mathrm{GL}}
\def\O{\mathrm{O}}
\def\SO{\mathrm{SO}}
\def\Sp{\mathrm{Sp}}
\def\G{\mathrm{G}}
\def\H{\mathrm{H}}
\def\J{\mathrm{J}}
\def\U{\mathrm{U}}
\def\P{\mathrm{P}}
\def\B{\mathrm{B}}
\def\K{\mathrm{K}}
\def\R{\mathrm{R}}
\def\N{\mathrm{N}}
\def\M{\mathrm{M}}
\def\T{\mathrm{T}}
\def\Q{\mathrm{Q}}
\def\A{\mathbb{A}}
\def\Z{\mathrm{Z}}
\def\e{\epsilon}
\def\AA{\mathbb{A}}
\def\CC{\mathbb{C}}
\def\FF{\mathbb{F}}
\def\VV{\mathbb{V}}
\def\WW{\mathbb{W}}
\def\NN{\mathbb{N}}
\def\HH{\mathbb{H}}
\def\RR{\mathbb{R}}
\def\QQ{\mathbb{Q}}
\def\ss{\subset}
\def\la{\langle}
\def\ra{\rangle}
\def\bs{\backslash}
\title{The local converse theorem for quasi-split $\O_{2n}$ and $\SO_{2n}$}
\author{Jaeho Haan, Yeansu Kim, Sanghoon Kwon}
\subjclass[2020]{11F70, 22E50}
\keywords{gamma factors, even orthogonal groups, local converse theorem, theta correspondence}
\address{Department of Mathematics Education\\
Catholic Kwandong University\\
Gangneung, Korea}
\email{jaehohaan@gmail.com}
\address{
77 Yongbong-ro, Buk-gu \\
Department of Mathematics Education\\
Chonnam National University\\
Gwangju, Korea}
\email{ykim@jnu.ac.kr}
\address{Department of Mathematics Education\\
Catholic Kwandong University\\
Gangneung, Korea}
\email{skwon@cku.ac.kr}
\begin{document}
\maketitle
\begin{abstract}
Let $F$ be a non-archimedean local field of characteristic not equal to 2. In this paper, we prove the local converse theorem for quasi-split $\O_{2n}(F)$ and $\SO_{2n}(F)$, via the description of the local theta correspondence between $\O_{2n}(F)$ and $\Sp_{2n}(F)$. More precisely, as a main step, we explicitly describe the precise behavior of the $\gamma$-factors under the correspondence. Furthermore, we apply our results to prove the weak rigidity theorems for irreducible generic cuspidal automorphic representations of $\O_{2n}(\A)$ and $\SO_{2n}(\mathbb{A})$, respectively, where $\A$ is a ring of adele of a global number field $L$.
\end{abstract}

\section{Introduction}
Let $F$ be a non-archimedean local field of odd residual characteristic, that is, a finite extension of either $\QQ_p$ for an odd prime $p$ or $\FF_{q}(\!(t)\!)$ for an odd prime power $q$. Let $V$ be a $2n$-dimensional symmetric space over $F$, let $\O(V)$ be its orthogonal group (the isometry group of $V$) and $\SO(V)$ be the special orthogonal group of $V$, the identity component of $\O(V)$. We fix an element $\e$ in $\O(V) / \SO(V)$. (See Subsection~\ref{e} for the precise definition.) We write $\G_n:=\O(V)$ and $\H_n:=\SO(V)$, and assume that both groups are quasi-split. Let $\U$ be the maximal unipotent subgroup of a fixed Borel subgroup of $\SO(V)$ and let $\wt{\U}=\U \rtimes \la \e \ra.$ Let $\wt{\mu}$ (resp. $\mu$) be a generic character of $\wt{\U}(F)$ (resp. $\U(F)$).
In this paper, we prove the following local converse theorem for quasi-split even orthogonal groups under the following hypothesis on $\cha(F)=p\ne2$ case.

\begin{hyp}
The $\gamma$-factors for $\Sp_{2n} \times \GL_l$ are properly defined in $\cha(F)=p\ne2$ cases. Furthermore, they satisfy natural properties of $\gamma$-factors. (For the precision of the natural properties, see Property~\ref{Property_gamma} (i)-(vii) 
except (vi).)
\end{hyp}

\begin{thm}[Local Converse Theorem for $\O_{2n}$]\label{thm:main} Assume the above working hypothesis on $\cha(F)=p\ne2$ case.
Let $\wt{\pi_1}$ and $\wt{\pi_2}$ be irreducible $\wt{\mu}$-generic representations of $\G_n(F)$ with the same central characters such that
\[\gamma(s,\wt{\pi_1} \times \rho,\psi)=\gamma(s,\wt{\pi_2} \times \rho,\psi)
\] holds for any irreducible supercuspidal representation $\rho$ of $\GL_i(F)$ with $1\le i \le n$. Then we have
\[\pi_1 \simeq \pi_2. 
\]
Here, the $\gamma$-factors are defined as the local gamma factors of $\G_n \times \GL_i$ (See Definition~\ref{Ogamma}.)
%{\color{red} A)See sect. 2.4.}
\end{thm}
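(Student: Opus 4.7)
The plan is to reduce the theorem to the known local converse theorem for $\Sp_{2n}(F)$ via the local theta correspondence for the dual pair $(\O(V),\Sp_{2n})$, which is the approach announced in the abstract. This route is natural because Definition~\ref{Ogamma} tells us that the $\O_{2n}$ gamma factor is itself constructed from the theta lift, so the gamma-factor hypothesis should pass very directly through the correspondence.

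First, for $i=1,2$, I would consider the equal-rank theta lift $\theta(\pi_i)$ of $\pi_i$ to $\Sp_{2n}(F)$, using the Weil representation associated to $\psi$. Since $\pi_1$ and $\pi_2$ share a central character, both lifts belong to the same Witt tower, so the comparison takes place on the single group $\Sp_{2n}(F)$. Using the precise local theta correspondence recalled earlier in the paper, I expect $\theta(\pi_i)$ to be a nonzero irreducible representation, and the $\wt{\mu}$-genericity of $\pi_i$ to force $\theta(\pi_i)$ to be generic with respect to an appropriate Whittaker datum of $\Sp_{2n}(F)$.

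Second, I would match gamma factors. By the construction of the $\O_{2n}$ gamma factor in Definition~\ref{Ogamma}, one expects an identity of the form
\[
\gamma(s,\pi_i\times\sigma,\psi) \;=\; \gamma(s,\theta(\pi_i)\times\sigma,\psi)\cdot c(s,\sigma,\psi),
\]
where $c(s,\sigma,\psi)$ depends only on $\sigma$ and $\psi$. The hypothesis then yields
\[
\gamma(s,\theta(\pi_1)\times\sigma,\psi)=\gamma(s,\theta(\pi_2)\times\sigma,\psi)
\]
for every irreducible supercuspidal $\sigma$ of $\GL_i(F)$ with $1\le i\le n$. Invoking the local converse theorem for generic irreducible representations of $\Sp_{2n}(F)$ (which is stated precisely at the level of twists by $\GL_i$, $i\le n$) yields $\theta(\pi_1)\simeq\theta(\pi_2)$. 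Finally, Howe duality, i.e.\ injectivity of the local theta correspondence, together with the equality of central characters of $\pi_1$ and $\pi_2$, forces $\pi_1\simeq\pi_2$.

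The main obstacle is the gamma-factor compatibility in the second step: one must verify that the $\O_{2n}$ gamma factor from Definition~\ref{Ogamma} and the symplectic gamma factor on the theta lift agree up to an explicit factor depending only on $\sigma$ and $\psi$. This requires a careful comparison of the doubling / Rallis--Piatetski-Shapiro type integrals on the two sides of the correspondence, and tracking how the extended Whittaker character $\wt{\mu}$, together with the outer involution $\e$ used to pass between $\SO_{2n}$ and $\O_{2n}$, interact with the Weil representation. Handling this dictionary correctly, and controlling the extra subtleties coming from the component group $\O_{2n}/\SO_{2n}$, is where the real work of the proof concentrates.
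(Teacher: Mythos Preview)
Your overall strategy matches the paper's---transfer the problem to $\Sp_{2n}(F)$ via the equal-rank theta correspondence, use the gamma-factor identity of Corollary~\ref{Theta3}, invoke Theorem~\ref{Sp}, and conclude by Howe duality---but there is one factual misreading and one genuine gap.

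The factual misreading is your claim that Definition~\ref{Ogamma} constructs the $\O_{2n}$ gamma factor from the theta lift. It does not: $\gamma(s,\wt{\pi}\times\sigma,\psi)$ is defined by restricting $\wt{\pi}$ to $\H_n(F)=\SO_{2n}(F)$ and taking the Rankin--Selberg (equivalently Langlands--Shahidi) gamma factor of an irreducible constituent; Lemma~\ref{conj} makes this well defined. The compatibility with the theta lift is a separate result, Theorem~\ref{Theta2} and Corollary~\ref{Theta3}, and the correction factor is $\gamma(s,\sigma\chi_V,\psi)$ together with a $\chi_V$-twist on $\sigma$, not something already built into the definition.

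The genuine gap is the nonvanishing of $\Theta_{\psi,V_n,W_n}(\wt{\pi})$ (or of $\wt{\pi}\otimes\det$) for an arbitrary irreducible \emph{generic} $\wt{\pi}$. The paper only has this for tempered $\wt{\pi}$ (Proposition~\ref{tem}), and the Remark following it explains that the available argument, via \cite[Theorem C.4, C.5]{GI1}, does not guarantee nonvanishing of the big theta lift in the non-tempered case. This is precisely why the paper proceeds in two steps: it first proves the theorem for tempered $\wt{\pi}$ exactly as you outline (Theorem~\ref{tem}), and then for a general generic $\wt{\pi}$ it passes to the tempered support $(v_1,\dots,v_r;\wt{\pi_0})$ with exponents $(s_1,\dots,s_r)$, uses a pole/zero analysis (Lemma~\ref{temp}, Proposition~\ref{Pole}, Proposition~\ref{equal}) to show the two tempered supports agree and that $\gamma(s,\wt{\pi_{01}}\times\sigma,\psi)=\gamma(s,\wt{\pi_{02}}\times\sigma,\psi)$, applies the tempered case to get $\wt{\pi_{01}}\simeq\wt{\pi_{02}}$, and finally invokes uniqueness of the generic constituent of the induced representation. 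Your sketch skips this reduction entirely. A secondary point you also omit is the $\mu_{c'}^{+}$ versus $\mu_{c'}^{-}$ bookkeeping (Remark~\ref{pm}): one may have to replace $\wt{\pi_i}$ by $\wt{\pi_i}\otimes\det$ to ensure the lift is nonzero and $\mu_{c'}'$-generic, and this is harmless for the gamma-factor hypothesis because $\gamma(s,\wt{\pi}\times\sigma,\psi)=\gamma(s,(\wt{\pi}\otimes\det)\times\sigma,\psi)$ by construction.
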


From this, we deduce immediately the following theorem.

\begin{thm}[Local Converse Theorem for $\SO_{2n}$] Assume the above working hypothesis on $\cha(F)=p\ne2$ case. Let $\pi_1$ and $\pi_2$ be irreducible $\mu$-generic representations of $\H_n(F)$ with the same central characters such that
\[\gamma(s,\pi_1 \times \rho,\psi)=\gamma(s,\pi_2 \times \rho,\psi)
\] holds for any irreducible supercuspidal representation $\rho$ of $\GL_i(F)$ with $1\le i \le n$. Then we have
\[\pi_1 \simeq \pi_2 \quad \text{or} \quad  \pi_1 \simeq \pi_2^{\e}. 
\]
Here, the $\gamma$-factors are the local gamma factors of $\H_n \times \GL_i$ defined by Langlands-Shahidi method or Rankin--Selberg method and $\pi_2^{\e}$ is the conjugation of $\pi_2$ by $\e$.
\end{thm}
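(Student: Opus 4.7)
The deduction from the Local Converse Theorem for $\O_{2n}$ proceeds by a standard Clifford-theoretic reduction, lifting each $\mu$-generic representation of $\H_n(F)$ to a $\tilde{\mu}$-generic representation of $\G_n(F)$, applying the first theorem there, and restricting back.

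First I would extend each $\pi_i$. Since $\H_n(F)$ is normal of index two in $\G_n(F)$, the induced representation $\Ind_{\H_n(F)}^{\G_n(F)} \pi_i$ is either irreducible (precisely when $\pi_i \not\simeq \pi_i^\e$) or splits as a sum of two inequivalent irreducible constituents $\Pi_i^\pm$ with $\Pi_i^\pm|_{\H_n(F)} \simeq \pi_i$ (precisely when $\pi_i \simeq \pi_i^\e$). In either situation one obtains an irreducible representation $\Pi_i$ of $\G_n(F)$ whose restriction to $\H_n(F)$ contains $\pi_i$; a careful choice of $\Pi_i$, made by extending the $\mu$-Whittaker functional on $\pi_i$ along $\tilde{U} = U \rtimes \langle \e \rangle$, makes $\Pi_i$ be $\tilde{\mu}$-generic. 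Since $Z(\G_n) = Z(\H_n) = \{\pm I\}$, the central characters of $\Pi_1$ and $\Pi_2$ agree, and we may assume this compatibility by twisting by the sign character if necessary.

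Second I would match gamma factors. The Rankin–Selberg integrals defining $\gamma(s, \Pi_i \times \sigma, \psi)$ for $\G_n \times \GL_i$ and $\gamma(s, \pi_i \times \sigma, \psi)$ for $\H_n \times \GL_i$ use the same unipotent integral, differing only by whether the Whittaker datum lives on $\tilde{U}$ or $U$. The construction of the $\tilde{\mu}$-generic extension ensures that the integrand is literally the same, yielding $\gamma(s, \Pi_i \times \sigma, \psi) = \gamma(s, \pi_i \times \sigma, \psi)$. Combined with the hypothesis, this produces $\gamma(s, \Pi_1 \times \sigma, \psi) = \gamma(s, \Pi_2 \times \sigma, \psi)$ for all irreducible supercuspidal $\sigma$ on $\GL_i(F)$ with $1 \le i \le n$. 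The Local Converse Theorem for $\O_{2n}$ then gives $\Pi_1 \simeq \Pi_2$.

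Finally, restrict $\Pi_1 \simeq \Pi_2$ to $\H_n(F)$ and do a case analysis. When neither $\pi_i$ is $\e$-self-dual, the restrictions $\Pi_i|_{\H_n(F)}$ equal $\pi_i \oplus \pi_i^\e$, so $\pi_1 \oplus \pi_1^\e \simeq \pi_2 \oplus \pi_2^\e$, forcing $\pi_1 \simeq \pi_2$ or $\pi_1 \simeq \pi_2^\e$. When both $\pi_i \simeq \pi_i^\e$, each restriction is already irreducible and equal to $\pi_i$, giving $\pi_1 \simeq \pi_2$. The mixed case—exactly one of $\pi_1, \pi_2$ being $\e$-self-dual—is ruled out by comparing lengths of the two restrictions, which would necessarily differ (one of length one, the other of length two). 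The principal technical step I anticipate being the main obstacle is justifying the gamma factor compatibility $\gamma(s, \Pi_i \times \sigma, \psi) = \gamma(s, \pi_i \times \sigma, \psi)$ for the chosen $\tilde{\mu}$-generic extension; once this is traced through the Rankin–Selberg construction, the rest of the argument is formal.
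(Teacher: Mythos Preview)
Your argument is correct and mirrors the paper's proof (Theorem~\ref{m2}) almost exactly: lift each $\pi_i$ to a $\mu_{c'}^+$-generic constituent $\wt{\pi_i}$ of $\Ind_{\H_n(F)}^{\G_n(F)}(\pi_i)$ via Lemma~\ref{gen}, transfer the $\gamma$-factor equalities, apply Theorem~\ref{m}, and restrict back using Proposition~\ref{Clifford}. The one place you overshoot is the step you flag as ``the main obstacle'': in the paper the $\gamma$-factor for $\G_n\times\GL_k$ is \emph{defined} (Definition~\ref{Ogamma}) to be the $\gamma$-factor of any irreducible constituent of the restriction to $\H_n(F)$, well-defined by Lemma~\ref{conj}, so the identity $\gamma(s,\wt{\pi_i}\times\sigma,\psi)=\gamma(s,\pi_i\times\sigma,\psi)$ is tautological and no analysis of Rankin--Selberg integrals on $\wt{U}$ versus $U$ is needed. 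Your remark about possibly twisting by the sign character to fix central characters is also unnecessary: since $Z(\G_n)=\{\pm I\}\subset\H_n(F)$, the central character of $\wt{\pi_i}$ is already that of $\pi_i$.
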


There are various versions of the local converse theorem that apply to different groups. To our knowledge, it seems that Henniart was the first to address this type of problem. In his work \cite{He1}, he proved a weak version of the Local Converse Theorem (LCT) for $\GL_n$. More precisely, he showed that $\pi_1$ and $\pi_2$ are equivalent if $\gamma(s,\pi_1\times\sigma,\psi)=\gamma(s,\pi_2\times\sigma,\psi)$ for any generic irreducible admissible representation $\sigma$ of $\GL_{n-1}(F)$ when $\textrm{char}(F)=0$. This was extended by Jiang and Soudry \cite{JS03} to the case of odd special orthogonal groups $\SO_{2n+1}$ using the global weak functorial lift from $\SO_{2n+1}$ to $\GL_{2n}$, the local descent from $\GL_{2n}$ to $\Mp_{2n}$, and the Howe lifts from $\SO_{2n+1}$ to $\Mp_{2n}$. They also applied a local-to-global argument to extend Henniart's result to the case of $\SO_{2n+1}$. After Jiang \cite{Jng} proposed the LCT for all classical groups, Chai \cite{Cha19}, Jacquet and Liu \cite{JL18} and  P. Yan, Q. Zhang \cite{YZ23} independently improved upon Henniart's result to the best.

Quite recently, Zhang \cite{Q18, Q19} proved the supercuspidal cases of the conjectures for $\Sp_{2n}$ and $\U_{2n+1}$ using a theory of partial Bessel functions developed by Cogdell, Shahidi and Tsai in \cite{CST17}. Following in the same vein as Zhang's work, Jo \cite{Jo} extended the results of Jiang-Soudry \cite{JS03} and Zhang \cite{Q18} to generic cases for $\SO_{2n+1}$ and $\Sp_{2n}$ for non-archimedean local fields of characteristic different from 2. For other classical groups, Morimoto \cite{M18} proved the theorem for even unitary groups using the local descent method. Recently, Hazeltine and Liu \cite{HL} announced that they have also proven it for split $\SO_{2n}$ using a similar method. Hazeltine extended it for quasi-split $\SO_{2n}$ over a local field of characteristic zero and a finite field using the local descent method in \cite{Ha} (see \cite[Introduction]{HL} for more details). It is worth noting that most of the literature cited above deals with the local field of characteristic zero case, except for \cite{Jo}.

The main purpose of the paper is to prove the LCT for quasi-split $\SO_{2n}$ with arbitrary type $(d,c)$ (see subsection \ref{orthogonal} for the definition of type $(d,c)$) and quasi-split $\O_{2n}$ when a local field $F$ is of any characteristic not equal to $2$. This can be done by thoroughly studying the local theta correspondence between $\O_{2n}$ and $\Sp_{2n}$. Specifically, we establish a precise relationship between the $\gamma$-factors of $\O_{2n}$ and those of $\Sp_{2n}$, which enables us to settle the LCT for quasi-split $\O_{2n}$. Using the restriction method from $\O_{2n}$ to $\SO_{2n}$, we then deduce the LCT for the quasi-split $\SO_{2n}$ case. Let us remark that in the case when $F$ is of characteristic zero, this is a slight generalization of Hazeltine and Liu \cite{HL}, which proves the LCT for split $\SO_{2n}$ of type $(1, -1/8)$.

\begin{rem}
\begin{enumerate}
    \item When $F$ is a local field of characteristic 0, Theorem~\ref{thm:main} follows relatively easily once we use Arthur's results \cite{Ar} on the local Langlands correspondence for quasi-split $\O_{2n}$, as explained in \cite{AG17}, together with the LCT for $\GL_{2n}$ \cite{He1} and the uniqueness of generic members in $L$-packets \cite{At17}. Note that our method is independent of Arthur's results. We believe that the proofs independent of Arthur's results have intrinsic value.
    \item The first author has also written a separate paper \cite{H} on the LCT for the metaplectic groups. We present this work separately for two reasons: to avoid making the notation too complicated, and because the metaplectic case can be achieved using existing results in the literature, while the even orthogonal case requires more non-trivial substantial work. We also note that \cite{YZ24} established a refined version of a local converse theorem for the group $\SO_4$ over a $p$-adic field $F$. Specifically, they showed that a generic supercuspidal representation $\pi$ of $\SO_4(F)$ is uniquely determined by its $\GL_1$, $\GL_2$-twisted local gamma factors, together with a twisted exterior square local gamma factor of $\pi$.    
\end{enumerate}
\end{rem}

As an application of our main results, the second purpose of the paper is to prove the rigidity theorem for $\O_{2n}$ over a global number field.

\begin{thm} [Weak rigidity theorem for $\O_{2n}$]\label{weak}
Let $L$ be a global number field and $\AA$ its adele ring. Let $\wt{\chi}$ be a non-trivial generic  character of $\wt{\U}(F)\bs \wt{\U}(\A)$ and $\wt{\pi}=\otimes_v \wt{\pi_v}$ and $\wt{\pi}'=\otimes_v \wt{\pi_v}'$ be irreducible cuspidal $\wt{\chi}$-generic automorphic representations of $\O_{2n}(\A)$. If $\wt{\pi_v}\simeq \wt{\pi_v}'$ or $\wt{\pi_v}\simeq \wt{\pi_v}'\otimes \det$  for almost all $v$, then $\wt{\pi_v}\simeq \wt{\pi_v}'$ or $\wt{\pi_v}\simeq \wt{\pi_v}'\otimes \det$ for all places of $L$.
\end{thm}

The weak rigidity theorem can be obtained using Arthur's multiplicity formula for $\O_{2n}$, as explained in \cite{AG17}. Again, we stress that our proof of Theorem~\ref{weak} is independent of Arthur's results. %We have independently established the theorem without relying on Arthur's multiplicity formula.

This paper is organized as follows. In Section \ref{Prelim}, we prepare the basic setup to state our theorem precisely. We explain the local theta correspondence and review some of its relevant results in Section~\ref{theta}.
In Section~\ref{gammacf}, we examine the relationship of $\gamma$-factors for $\O_{2n}$ and $\Sp_{2m}$ through the local theta correspondence for $(\O_{2n},\Sp_{2m})$. We provide a proof of our main theorem in Section~\ref{The proof}. For some technical issues that arise in the local theta correspondence theory, we divide the proof into two steps: we first prove the tempered case, and then we generalize it to the generic case. In Section~\ref{sec:applications}, we demonstrate the weak rigidity theorem for quasi-split $\O_{2n}$ and $\SO_{2n}$ as an application of the theorems established in the previous sections. The proofs of foundational results that are essential for establishing Proposition~\ref{Theta1} and Theorem~\ref{Theta2} are contained in Appendix~\ref{sec:tjm} and Appendix~\ref{sec:gtl}.

\subsection{Notations}
\begin{itemize}
\item $F$ : a non-archimedean local field of characteristic different from 2
\item $\cha(F)$ : the characteristic of $F$
\item $q$ : the order of the residue field of $F$ such that $q=p^e$ for some odd prime $p$
\item $\psi$ : a non-trivial additive character of $F$
\item $\varpi$ : a uniformizer of $F$
\item $|\cdot|_F$ : a normalized absolute value in $F$ such that $|\varpi|_F=q^{-1}$
\item $\Irr(G)$ : the set of isomorphism classes of irreducible smooth representations of $G:=\G(F)$, where $\G(F)$ is the $F$-points of a reductive group $\G$ defined over $F$
\item $\Irr_{\temp}(G)$ : the set of isomorphism classes of irreducible smooth tempered representations of $G:=\G(F)$, where $\G(F)$ is the $F$-points of a reductive group $\G$ defined over $F$
\item $\pi^{\vee}$ : the contragredient representation of $\pi \in \Irr(\G(F))$
\item $V=V_n$ : a $2n$-dimensional orthogonal space, i.e., a $2n$-dimensional vector space over $F$ equipped with a non-degenerate symmetric bilinear form $(\ ,\ )_{V_n}$
\item $\G_n:=\O(V_n)$ : the orthogonal group of $V_n$ 
\item $\H_n:=\SO(V_n)$ : the special orthogonal group of $V_n$ 
\item $W_m$ : a $2m$-dimensional symplectic space over $F$, i.e., a $2m$-dimensional vector space over $F$ equipped with a non-degenerate symplectic bilinear form $\la \ , \ \ra_{W_m}$

\item $\J_m:=\Sp(W_m)$ : the symplectic group of $W_m$
\item $\SO_{n} := \SO(V_n)$ for some $V_n$
\item $\Sp_{2m} := \Sp(W_m)$ for some $W_m$

\item $\mathbb{I}$ : the trivial representation
\item $N_{E/F}$ : a norm map from $E$ to $F$, where $E$ is a quadratic extension of $F$
\item $\Ind_{B}^G$ : the unnormalized induction for an algebraic group $G:=\G(F)$ and its closed subgroup $B$
\item $\ind_{B}^G$ : the unnormalized compactly supported induction for an algebraic group $G:=\G(F)$ and its closed subgroup $B$
\item $\mc{A}(G)$ : the space of automorphic forms on $G$ for an algebraic group $G$
\item $\mc{A}_{cusp}(G)$ : the space of cusp forms on $G$ for an algebraic group $G$
\item $\mu_2$ : an algebraic group of order 2
\item $\textbf{1}$ : the identity element in $\G(F)$ for a given algebraic group $\G$
\end{itemize}

\section{Preliminaries}\label{Prelim}
In this section, we prepare the basic setup to state our main theorem. While many of the theorems and propositions we quote in this paper only address the case where $\cha(F)=0$, most of the proofs apply equally to the positive characteristic cases. Therefore, instead of repeating the same proof in the references for the case $\cha(F)=0$, we adopt the position that they also cover the case $\cha(F)=p$ when it is applicable. However, when the proof of a proposition or a theorem for $\cha(F)=0$ is not clearly stated in the references, we provide a proof that covers both cases.

\subsection{Orthogonal and symplectic groups}
\subsubsection{Orthogonal groups}\label{orthogonal}
Let $V_n$ (or $V$ if there is no confusion on the dimension) be a $2n$-dimensional vector space over $F$ equipped with a non-degenerate symmetric bilinear form $(\cdot,\cdot)_V$ and let $\G_n:=\O(V)$ and $\H_n:=\SO(V)$ be the associated orthogonal and special orthogonal group, respectively. When $n=0$, we define $\G_0$ and $\H_0$ as the trivial group.

Define the discriminant of $V$ by
\[
\disc(V)=\disc(V,(\cdot,\cdot)_V)
=2^{-n}
(-1)^{\frac{n(n-1)}{2}}\det((e_i,e_j)_V)_{i,j})\bmod F^{\times2}\in F^\times/F^{\times2}.
\]
where $\{e_1, \cdots, e_{2n} \}$ is a basis of $V$.
Let $\chi_V=(\cdot,\disc(V))_{\mf{h}}$ be the quadratic character of $F^{\times}$ associated with $F(\sqrt{\disc(V)})/F$, where $(\cdot,\cdot)_{\mf{h}}$ is the quadratic Hilbert symbol. 

Denote by $V_{\an}$ the anisotropic kernel of $V$. It is easy to check that $\H_n$ %\blue{($\G_n$ as well?} \red{Yes!}) 
and $\G_n$ are quasi-split if and only if $\dim(V_{\an}) \le 2$. 

In this case, let $\{e_1,\cdots,e_{n-1}\}$  and $\{e_1^*,\cdots,e_{n-1}^*\}$ be subsets of $V$ satisfying
\[(e_i,e_j)_V=(e_i^*,e_j^*)_V=0, \quad (e_i,e_j^*)_V=\delta_{ij}.
\]
For $1\le k \le n-1$, let 
\[X_k=\text{Span}\{e_1,\cdots,e_k\} \text{ and } X_k^*=\text{Span}\{e_1^*,\cdots,e_k^*\},
\] and $V_{n,k}$ be the orthogonal complement of $X_k \oplus X_k^*$ in $V$ so that $V=X_k \oplus V_{n,k}\oplus X_k^*$. 

Next, we consider the flag of isotropic subspaces
\[X_{k_1} \ss X_{k_1+k_2} \ss \cdots \ss X_{k_1+\cdots +k_r}\ss V.
\]
The stabilizer of such a flag is a parabolic subgroup $\P$ of $\H_n$ whose Levi factor $\M$ is
\[{ \M} \simeq \GL_{k_1} \times \cdots \times \GL_{k_r} \times \H_{n-k_1-\cdots - k_r},
\]
where each $\GL_{k_i}$ is the group of invertible linear maps on Span $\{e_{k_i+1},\cdots,e_{k_{i+1}}\}$.
It is known that there exist $c,d \in F^{\times}$ such that 
\begin{equation}\label{type}
V_{n,n-1} \cong F[X]/(X^2-d)
\end{equation}
becomes a $2$-dimensional vector space equipped with the pairing \[
(\alpha,\beta)\mapsto \pair{\alpha,\beta}_{V_{n,n-1}}\coloneqq c\cdot\tr(\alpha \cdot \e(\beta)),
\]
where $\e$ is the involution on $F[X]/(X^2-d)$
induced by $a+bX\mapsto a-bX$. The images of $1,X \in F[X]$ through the isomorphism (\ref{type}) are denoted by $e,e'$, respectively.
In this setting, i.e., $V_n=X_{n-1} \oplus V_{n,n-1} \oplus X_{n-1}^*$ with the isomorphism (\ref{type}), 
we say that $V_n$ has a type $(d,c)$. Note that when $V_n$ have the types $(d,c)$ and $(d',c')$ for $c, c', d, d' \in F^{\times}$, then 
\[ d =d' \pmod{F^{\times2}}, \quad c =c' \pmod{N_{E/F}(E^{\times})},\] 
where $E=F(\sqrt{d})$.\par
Throughout the paper, when $V_n$ appears, we always assume that $\G_n$ and $\H_n$ are quasi-split, $V_n$ has a type $(d,c)$ and %(\blue{fix a representative?} \red{not fixed}) 
$c'$ is any element in $ cN_{E/F}(E^{\times})/F^{\times2}$ for fixed $c, d \in F^{\times}$.

\subsubsection{Symplectic groups}
Let $H$ be the hyperbolic plane over $F$, i.e. the split symplectic space of dimension 2,
and for $r \ge 0$, let $W_r=H^{\oplus r}$ and $\la \ ,\ \ra_{W_m}$ the non-degenerate symplectic form of $W_m$. The collection $\{W_r \ \vert \ r  \ge 0\}$ is called a Witt tower of spaces. For each $m \in \NN$, let $\J_m:=\Sp(W_m)$ be the associated symplectic group. When $m=0$, we define $\J_0$ as the trivial group.
Let $\{f_1,\cdots,f_m,f_1^*,\cdots,f_m^*\}$ be a specific basis of $W_m$ satisfying
\[\pair{f_i,f_j}_{W_n}=\pair{f_i^*,f_j^*}_{W_n}=0, \quad \pair{f_i,f_j^*}_{W_n}=\delta_{ij}.
\]
For $1\le k \le m$, let 
\[Y_k=\text{Span}\{f_1,\cdots,f_k\} \text{ and } Y_k^*=\text{Span}\{f_1^*,\cdots,f_k^*\},
\]
so that $W_m=Y_m \oplus Y_m^*$. We also set
\[W_{m,k}=\text{Span}\{f_{k+1},\cdots,f_m,f_m^*,\cdots,f_{k+1}^*\},
\]
so that $W_m=Y_k \oplus W_{m,k} \oplus Y_k^*$.\\

Next, we consider the flag of isotropic subspaces
\[Y_{k_1} \ss Y_{k_1+k_2} \ss \cdots \ss Y_{k_1+\cdots +k_r}\ss W_m.
\]
The stabilizer of such a flag is a parabolic subgroup $\P$ of $\J_m$ whose Levi factor $\M$ is
\[\M \simeq \GL_{k_1} \times \cdots \times \GL_{k_r} \times \J_{m-k_1-\cdots - k_r},
\]
where each $\GL_{k_i}$ is the group of invertible linear maps on Span $\{f_{k_i+1},\cdots,f_{k_{i+1}}\}$.

\subsection{Representations of $\SO(V)$ and $\O(V)$}\label{e}
Let $V$ be a $2n$-dimensional orthogonal space over $F$. For $n \ge 1$, suppose that $\G_n:=\O(V)$ and $\H_n:=\SO(V)$ are quasi-split. Decompose $V$ as $V=X_{n-1} \oplus V_{n,n-1} \oplus X_{n-1}^*$  and fix $\e \in \O(V_{n,n-1}) \bs \SO(V_{n,n-1} )$ such that $\e(e)=e, \e(e')=-e'$. Through the natural embedding $\O(V_{n,n-1} ) \hookrightarrow \O(V)$, we may regard $\e$ as an element in $\O(V)$ which acts trivially on $X_{n-1} \oplus X_{n-1}^*$. For $\pi \in \Irr(\H_n(F))$, denote by $\pi^{\e}$ the conjugation of $\pi$ by $\e$.

The next proposition follows from the Clifford theory
(e.g., see \cite[Lemma 4.1]{BJ}).
\begin{prop} [{\cite[Proposition 2.1]{At18}}]\label{Clifford}

The following holds true.

\begin{enumerate}
\item\label{epinv}
For $\pi\in\Irr(\H_n(F))$, the following are equivalent:
\begin{itemize}
\item
$\pi^\e\cong\pi$;
\item
there exists $\cl{\pi}\in\Irr(\G_n(F))$ such that $\cl{\pi}|\H_n(F) \cong \pi$;
\item
the induction $\Ind_{\H_n(F)}^{\G_n(F)}(\pi)$ is reducible;
\item
$\Ind_{\H_n(F)}^{\G_n(F)}(\pi)$ $\cong \cl\pi\oplus(\cl\pi\otimes\det)$
for any $\cl\pi\in\Irr(\G_n(F))$ with $\cl{\pi}|\H_n(F)\cong \pi$.
\end{itemize}
\item
For $\cl{\pi}\in\Irr(\G_n(F))$, the following are equivalent:
\begin{itemize}
\item
$\cl{\pi}\otimes\det\cong\cl{\pi}$;
\item
there exists $\pi\in\Irr(\H_n(F))$ such that $\Ind_{\H_n(F)}^{\G_n(F)}(\pi)$ $\cong \cl\pi$;
\item
the restriction $\cl{\pi}|\H_n(F)$ is reducible;
\item
$\cl\pi|\H_n(F)\cong \pi\oplus\pi^\e$
for any $\pi\in\Irr(\H_n(F))$ with $\Ind_{\H_n(F)}^{\G_n(F)}(\pi)\cong\cl\pi$.
\end{itemize}
\end{enumerate}
\end{prop}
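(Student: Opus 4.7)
The plan is to apply standard Clifford theory to the normal subgroup $\H_n(F)\trianglelefteq\G_n(F)$ of index two, whose quotient $\{1,\e\}$ carries exactly one nontrivial character, namely $\det$. The main tools are Mackey's irreducibility criterion, Frobenius reciprocity, and Schur's lemma for admissible irreducibles; since everything is at most length two, the combinatorics are extremely rigid.

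For part (i), I would first invoke Mackey: because $\G_n(F)=\H_n(F)\sqcup\e\H_n(F)$, the induction $\Ind_{\H_n(F)}^{\G_n(F)}(\pi)$ is irreducible iff $\pi\not\cong\pi^\e$, which gives the equivalence between the reducibility of $\Ind$ and $\pi^\e\cong\pi$. Assuming $\pi^\e\cong\pi$, I would fix an intertwiner $A\co\pi\to\pi^\e$, observe that $A^2$ intertwines $\pi$ with itself and hence is scalar by Schur's lemma, and rescale so that $A^2=\pi(\e^2)$; then $\cl\pi(h\e^k):=\pi(h)A^k$ produces an irreducible admissible extension of $\pi$ to $\G_n(F)$. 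Conversely, any extension $\cl\pi$ of $\pi$ forces $\pi^\e\cong\pi$ via conjugation by $\cl\pi(\e)$. Finally $\cl\pi$ and $\cl\pi\otimes\det$ are non-isomorphic extensions of the same $\pi$, so by Frobenius reciprocity they exhaust the constituents of $\Ind_{\H_n(F)}^{\G_n(F)}(\pi)$, yielding the displayed decomposition.

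For part (ii), I would let $\pi$ be any irreducible subrepresentation of $\cl\pi|\H_n(F)$ and note that $\cl\pi(\e)\cdot\pi$ is a subrepresentation isomorphic to $\pi^\e$. If $\pi\not\cong\pi^\e$ then the restriction contains $\pi\oplus\pi^\e$ and is thus reducible; Frobenius reciprocity combined with part (i) forces $\cl\pi\cong\Ind_{\H_n(F)}^{\G_n(F)}(\pi)$, the restriction is exactly $\pi\oplus\pi^\e$, and $\cl\pi\otimes\det$ shares this restriction while embedding in the same irreducible induction, so $\cl\pi\otimes\det\cong\cl\pi$. If instead $\pi\cong\pi^\e$, then by part (i) $\pi$ has two distinct extensions $\tilde\tau$ and $\tilde\tau\otimes\det$ to $\G_n(F)$; thus $\cl\pi$ is one of them, the restriction $\cl\pi|\H_n(F)$ equals the single irreducible $\pi$, and $\cl\pi\otimes\det\not\cong\cl\pi$. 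This case analysis delivers both equivalences in (ii) together with the decomposition $\cl\pi|\H_n(F)\cong\pi\oplus\pi^\e$.

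The main obstacle is the construction of the extension in part (i): for a smooth admissible (generally infinite-dimensional) $\pi$ one must verify that Schur's lemma applies to the intertwiner $A$, that the rescaled operator genuinely upgrades $\pi$ to a group representation of $\G_n(F)$, and that the result remains admissible. Once that extension step is in place, the remaining length-two constraints are routine consequences of Mackey's criterion and Frobenius reciprocity.
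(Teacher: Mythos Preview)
Your proposal is correct and follows exactly the approach the paper indicates: the paper's entire proof is the one-line remark that the proposition ``follows from the Clifford theory (e.g., see \cite[Lemma 4.1]{BJ}),'' and your sketch simply supplies the standard Mackey/Frobenius/Schur details behind that citation for the index-two normal subgroup $\H_n(F)\trianglelefteq\G_n(F)$. There is nothing to add.
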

%We define equivalent relations $\sim_{\det}$ on $\Irr(\O(V))$ and
%$\sim_\e$ on $\Irr(\SO(V))$ by
%\[
%\cl\pi\sim_{\det}\cl\pi\otimes\det
%\quad\text{and}\quad
%\pi\sim_{\e}\pi^\e
%\]
%for $\cl\pi\in\Irr(\O(V))$ and $\pi\in\Irr(\SO(V))$.
%Note that $\cl\pi|\SO(V)\cong(\cl\pi\otimes\det)|\SO(V)$ and
%$\Ind_{\SO(V)}^{\O(V)}(\pi)\cong\Ind_{\SO(V)}^{\O(V)}(\pi^\e)$.
%Hence, by Proposition \ref{Clifford}, the restriction and the induction give a canonical bijection
%\[
%\Irr(\O(V))/\sim_{\det}\longleftrightarrow \Irr(\SO(V))/{\sim_\e}.
%\]
For $\pi\in\Irr(\H_n(F))$, we denote the equivalence class of $\pi$ by
\[
[\pi]=\{\pi,\pi^\e\}\in\Irr(\H_n(F))/{\sim_\e}.
\]
\subsection{Generic characters and generic representations}\label{subsec:gcgr}
\subsubsection{Generic characters}
Throughout the paper, let $\psi$ be a fixed non-trivial additive character of $F$. Let $\U$ (resp. $\U'$) be the unipotent radical of a Borel subgroup $\B=\T\U$ (resp. $\B'=\T'\U'$) of $\H_n$ (resp. $\J_m$), where $\T$ (resp. $\T'$) is the $F$-rational torus stabilizing the lines $Fe_i$ (resp. $Ff_i $) for each $i=1,\cdots,n-1$ (resp. $i=1,\cdots,m$).

A character $\wt{\chi}$ of $\U(F)$ and a character $\chi$ of $\U'(F)$ are called to be generic if the stabilizer of $\wt{\chi}$ in $\T(F)$ and the stabilizer of $\chi$ in $\T'(F)$ are both equal to the center of $\H_n(F)$ and $\J_m(F)$, respectively. By utilizing the structure of $V$, we can define several special (and essentially all) generic characters of $\U(F)$ and $\U'(F)$.

Suppose that $V$ has type $(d,c)$ for some $c,d \in F^{\times}$. Write $E=F(\sqrt{d})$. For an arbitrary $c' \in cN_{E/F}(E^{\times})/ F^{\times2}$, we define a generic character $\mu_{c'}$ of $\U(F)$ by
\[\mu_{c'}(u)=\psi((ue_2,e_1^*)_V+\cdots + (ue_{n-1},e_{n-2}^*)_V+(ue,e_{n-1}^*)_V), \quad u\in \U(F)
\]in which we regard $V$ has a type $(d,c')$.

\noindent By \cite[Sect. 12]{GGP}, the map $c'\mapsto \mu_{c'}$ gives a bijection (not depending on $\psi$)
\[
cN_{E/F}(E^\times)/F^{\times2}\rightarrow \text{\{$\T(F)$-orbits of generic characters of $\U(F)$\}}.
\]
Note that $\e$ normalizes $\U$ and fixes $\mu_{c'}$. Put $\wt{\U}=\U \rtimes \la \e \ra$. Since $\e$ fixes $\mu_{c'}$, we can extend $\mu_{c'}$ to $\wt{\U}(F)$. There are exactly two extensions $\mu_{c'}^{+},\mu_{c'}^{-}:\wt{\U}(F) \to \CC^{\times}$ which are determined by 
\[\mu_{c'}^{\pm}(\e)=\pm{1}.
\]

On the other hand, for $d' \in F^{\times} / F^{\times2}$, define a generic character $\mu_{d'}'$ of $\U'(F)$ by
\[\mu_{d'}'(u')=\psi(\pair{u'f_2,f_1^*}_{W_m}+\cdots + \pair{u'f_m,f_{m-1}^*}_{W_m}+d\pair{u'f_m^*,f_m^*}_{W_m}), \quad u' \in \U'(F).
\]
Again by \cite[Sect. 12]{GGP}, the map $d'\mapsto \mu_{d'}'$ gives a bijection (depending on $\psi$)
\begin{equation}\label{eq:t'-orbits}
F^\times/F^{\times2}\rightarrow \text{\{$\T'(F)$-orbits of generic characters of $\U'(F)$\}}.
\end{equation}

\subsubsection{Generic representations}
\noindent For $\pi \in \Irr(\H_n(F))$ (resp. $\wt{\pi} \in \Irr(\G_n(F))$), if $\Hom_{\U(F)}(\pi,\mu_{c'}) \ne 0$ (resp. $\Hom_{\wt{\U}(F)}(\wt{\pi},\mu_{c'}^{\pm}) \ne 0$), then we say that $\pi$ (resp. $\wt{\pi}$) is $\mu_{c'}$-generic (resp. $\mu_{c'}^{\pm}$-generic.) Note that if $\wt{\pi}$ is $\mu_{c'}^{\pm}$-generic, then $\wt{\pi} \otimes \det$ is $\mu_{c'}^{\mp}$-generic. Similarly, for $\tau \in \Irr(\J_m(F))$, if $\Hom_{\U'(F)}(\tau,\mu_{d'}') \ne 0$, then we say that $\tau$ is $\mu_{d'}'$-generic. 

Note that in this subsection we follow the setting and notations in \cite[Section~{7.3}]{AG17} and parts of ideas in the proof of the results in this subsection are inspired from \cite{AG17}.

The following lemma illustrates the transformation of genericity through the induction functor $\Ind_{\H_n(F)}^{\G_n(F)}$.

\begin{lem}[{\cite[Lemma 2.3]{AG17}}]\label{gen}
Let $\pi \in \Irr(\H_n(F))$. 
\begin{enumerate}
\item
Assume that $\pi$ can be extended to $\G_n(F)$.
Then there are exactly two such extensions.
Moreover, the following are equivalent:
\begin{enumerate}
\item[(A)]$\pi$ is $\mu_{c'}$-generic;
\item[(B)]
exactly one of two extensions is $\mu_{c'}^+$-generic but not $\mu_{c'}^-$-generic,
and the other is $\mu_{c'}^-$-generic but not $\mu_{c'}^+$-generic.
\end{enumerate}

\item
Assume that $\pi$ can not be extended to $\G_n(F)$.
Then $\wt{\pi} = \Ind_{\H_n(F)}^{\G_n(F)}(\pi)$ is irreducible.
Moreover, the following are equivalent:
\begin{enumerate}
\item[(A)]$\pi$ is $\mu_{c'}$-generic;
\item[(B)]
$\wt{\pi}$ is both $\mu_{c'}^+$-generic and $\mu_{c'}^-$-generic.
\end{enumerate}

\end{enumerate}
\end{lem}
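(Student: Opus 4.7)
Both parts reduce to a formal analysis of the $\e$-action on Whittaker functionals. The key geometric observation is that $\e$ normalizes $U$ and fixes the character $\mu_{c'}$, so that $\mu_{c'}^{\pm}|_{U}=\mu_{c'}$; consequently, for any smooth representation $\rho$ of $\G_n(F)$, conjugation by $\e$ acts on $\Hom_U(\rho|_U,\mu_{c'})$ via $\phi\mapsto\phi\circ\rho(\e)$, with eigenvalues in $\{\pm 1\}$ because $\e^2=1$. The $\pm 1$-eigenspaces are precisely $\Hom_{\wt{U}}(\rho,\mu_{c'}^{\pm})$. Uniqueness of the Whittaker model keeps $\dim\Hom_U(\cdot,\mu_{c'})\le 1$ throughout, so each eigenspace is at most one-dimensional.

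For part (1), let $\cl\pi$ and $\cl\pi\otimes\det$ be the two extensions of $\pi$ granted by Proposition~\ref{Clifford}. Since they share the same underlying space as $\pi$, we have $\Hom_U(\pi,\mu_{c'})=\Hom_U(\cl\pi|_U,\mu_{c'})$. Assuming (A), this one-dimensional space is an eigenspace of the $\e$-action on $\cl\pi$ with some sign $\alpha\in\{\pm 1\}$, so $\cl\pi$ is $\mu_{c'}^{\alpha}$-generic but not $\mu_{c'}^{-\alpha}$-generic. Because $(\cl\pi\otimes\det)(\e)=-\cl\pi(\e)$, the very same functional becomes an eigenvector of sign $-\alpha$ for the other extension, so $\cl\pi\otimes\det$ is $\mu_{c'}^{-\alpha}$-generic but not $\mu_{c'}^{\alpha}$-generic, proving (B). The converse (B)$\Rightarrow$(A) is immediate: any nonzero element of $\Hom_{\wt{U}}(\cl\pi,\mu_{c'}^{\pm})$ restricts to a nonzero element of $\Hom_U(\pi,\mu_{c'})$.

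For part (2), note that $\e\in\G_n(F)\setminus\H_n(F)$ already lies in $\wt{U}$, so $\H_n(F)\cdot\wt{U}=\G_n(F)$. Hence Mackey's formula for $\Ind_{\H_n(F)}^{\G_n(F)}(\pi)|_{\wt{U}}$ collapses to the single double coset represented by $1$, with $\H_n(F)\cap\wt{U}=U$, yielding
\[
\wt{\pi}\big|_{\wt{U}}\;\cong\;\Ind_{U}^{\wt{U}}(\pi|_U).
\]
Frobenius reciprocity then gives $\Hom_{\wt{U}}(\wt{\pi},\mu_{c'}^{\pm})\cong\Hom_U(\pi,\mu_{c'})$ for \emph{both} signs simultaneously, which is exactly the equivalence (A)$\Leftrightarrow$(B).

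The proof is essentially a mechanical exercise once Proposition~\ref{Clifford} and Whittaker uniqueness are in hand; the only subtle point is understanding \emph{why} the two cases produce opposite outcomes (one sign versus both signs), and this ultimately traces back to whether the action of $\e$ is already inner on $\pi$ (part (1), so it must split the Whittaker line into a single eigenline) or must be introduced externally via induction (part (2), so the full induced space automatically contains both eigenlines). I do not anticipate any genuine obstacle beyond this bookkeeping.
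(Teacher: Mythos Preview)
Your argument is correct and is exactly the standard Clifford--Mackey analysis one expects here. Note, however, that the paper does not give its own proof of this lemma: it simply quotes \cite[Lemma~2.3]{AG}. So there is no ``paper's proof'' to compare against beyond the citation; your write-up supplies what the paper omits, and it matches the natural argument (which is essentially what \cite{AG} does as well). One cosmetic point: you implicitly use $\e^2=1$, which is not stated verbatim in the paper but holds for any element of $\O(V_2)\smallsetminus\SO(V_2)$, so no harm done.
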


There exists a global analogue of the notion of generic representations. Consider a global field $L$ and its adèle ring $\A$. One can define the algebraic groups $\G_n, \H_n, \U$ over $L$ in a similar manner as in the local case. For an automorphic (i.e., left $\U(L)$-invariant) generic character $\chi$ of $\U(\A)$ and an automorphic form $\varphi$ of $\H_n(\A)$, denote the $\chi$-Fourier coefficient of $\varphi$ as
\begin{equation*} 
W_{\varphi}^{\chi}(g)\coloneqq \int_{\U(L) \bs \U(\A)} \varphi(u g)\cdot \overline{\chi(u)} du.
\end{equation*}
For an automorphic representation $\pi$ of $\H_n(\A)$, $\pi$ is called globally $\chi$-generic if $W_{\varphi}^{\chi}(\1) \neq 0$ for some $\varphi \in \pi$.

For each place $v$, let $\K_v$ be a maximal compact subgroup of $\G_n(L_v)$ such that $\K_v$ is special if $v$ is non-archimedean. For each place $v$ of $L$, take $\e_v \in \K_v$ such that $\e_v^2=\textbf{1}$,
$\det(\e_v )=-1$, $\e_v$ stabilizes $\chi_v$ and that $\wt{\e}\coloneqq(\e_v)_v\in  \G_n(\A)$ is in $\G_n(F)$. 

For $\textbf{t}=(t_v)_v \in  \mu_2(\A)$, define $\textbf{t} \cdot \wt{\e}\in \G_n(\A)$ as
\[\begin{cases}
(\textbf{t} \cdot \wt{\e})_v=\e_v, \quad \text{if } t_v=-1,\\
(\textbf{t} \cdot \wt{\e})_v=\textbf{1}, \quad \text{ if } t_v=1.
\end{cases}\]
Since $\mu_2(\A)\cdot \wt{\e}$ stabilizes $\U(\A)$, put $\wt{\U}\coloneqq  \U \rtimes \mu_2\cdot \wt{\e}$. For an automorphic (i.e., left $\wt{\U}(L)$-invariant) character $\wt{\chi}$ of $\wt{\U}(\A)$, we say that $\wt{\chi}$ is generic if $\wt{\chi} \vert_{\U(\A)}$ is generic. 

Fix a maximal compact subgroup $\K=\prod_v \K_v$ of $\G_n(\A)$ and let $\K_1 \coloneqq \K \cap \U(\A)$, $\K_2=\K \cap (\mu_2(\A)\cdot \wt{\e})$. Define Haar measures $du$ (resp. $dt$) on $\U(\A)$ (resp. $\mu_2(\A)$) such that $\rm{vol}(\K_1,du)=\rm{vol}(\K_2,d\textbf{t})=1$.

Now we recall the precise definition of automorphic forms on $\G_n(\A)$ \cite[Section 6.7]{AG17}:
\begin{defn} We say that a function
$$
\varphi: \G_n(\A) \rightarrow \mathbb{C}
$$
is an automorphic form on $\G_n(\A)$ if $\varphi$ satisfies the following conditions:
\begin{enumerate}
    \item $\varphi$ is smooth and of moderate growth;
    \item $\varphi$ is left $\G_n(L)$-invariant;
    \item $\varphi$ is right $\K$-finite
    \item $\varphi$ is $\mathfrak{z}$-finite, where $\mathfrak{z}$ is the center of the universal enveloping algebra of $\operatorname{Lie}\left(\G_n(\K_{\infty}) \otimes_{\mathbb{R}} \mathbb{C}\right)$
\end{enumerate}
\end{defn}

Since there is no definition of cusp forms and global genericity on $\G_n(\A)$ in \cite{AG17}, we define
those concepts precisely as follows:
\begin{defn}\label{cuspO}
For an automorphic form $\varphi$ on $\G_n(\A)$, we say $\varphi$ is a cusp form if
$$
\int_{\N(F) \backslash \N(\mathbb{A})} \varphi(n) d n=0
$$
for all nontrivial unipotent subgroups $\N$ of all standard parabolic subgroups $\P=\M\N \subset \H_n.$ If an automorphic representation $\wt{\pi}$ of $\G_n(\A)$ occurs in the space of cusp forms of $\G_n(\A)$, we call $\wt{\pi}$ a cuspidal representation of $\G_n(\A)$.  
\end{defn}

Write $\mc{A}(\G_n)$ (resp. $\mc{A}_{cusp}(\G_n)$) for the space of automorphic (resp. cusp) forms of $\G_n(\A)$. For an automorphic generic character $\wt{\chi}$ of $\wt{\U}(\A)$, we define globally $\wt{\chi}$-generic automorphic representations of $\G_n(\A)$ as follows.
\begin{defn}\label{globalgenericO}
 Let $\wt{\chi}$ be an automorphic generic character of $\wt{\U}(\A)$. Assume that $\wt{\pi}$ is an irreducible cuspidal representation of $\G_n(\A)$ in $\mc{A}(\G_n)$ and let $\wt{\varphi}$ be an cusp form in $\wt{\pi}$. Let
\begin{equation} \label{ggen}
W_{\wt{\varphi}}^{\wt{\chi}}(g)=\int_{\mu_2(L) \bs \mu_2(\A)}\int_{\U(L) \bs \U(\A)} \wt{\varphi}(u(\textbf{t} \cdot \wt{\e}) g)\cdot \overline{\wt{\chi}(u(\textbf{t} \cdot \wt{\e}))} du d\textbf{t}.
\end{equation}
Then $\wt{\pi}$ is called globally $\wt{\chi}$-generic if $W_{\wt{\varphi}}^{\wt{\chi}}(\1) \neq 0$ for some $\wt{\varphi} \in \wt{\pi}$. If $\wt{\pi}$ is globally $\wt{\chi}$-generic for some automorphic generic character $\wt{\chi}$ of $\wt{\U}(\A)$, we say that $\wt{\pi}$ is globally generic.
\end{defn}

\begin{prop}\label{regen}
Let $\wt{\pi}$ be an irreducible globally $\wt{\chi}$-generic cuspidal representation of $\G_n(\A)$. Then there exists an irreducible globally $\chi$-generic cuspidal representation $\pi$ of $\H_n(\A)$ that appears in $\wt{\pi} |_{\H_n(\A)}$.  
\end{prop}
\begin{proof}
$W_{\wt{\varphi}}^{\wt{\chi}}(g)$ absolutely converges for all $\wt{\varphi} \in \wt{\pi}$ and $g \in \G_n(\A)$ because $\U(L)\bs \U(\A)$ and $\mu_2(L) \bs \mu_2(\A)$ are compact. Since $\wt{\varphi}(g)$ is right $\K$-finite, there is a finite subset $S$ of places including all archimedean places of $L$ such that $\wt{\varphi}$ is $\prod_{v \notin S} \mu_2(L_v)\cdot \wt{\e}$-invariant. 
Choose a place $v_0 \notin S$ and put $\K_{\text{fin}}\coloneqq \prod_{v \notin S \cup \{v_0\}} \mu_2(L_v)$. Then $\prod_{v \notin S} \mu_2(L_v)= \{(1, K_{\text{fin}})\} \bigsqcup \{(-1, \K_{\text{fin}})\}$. Here, $\{(1, \K_{\text{fin}})\}$ is $\{ (1, k) \mid k \in \K_{\text{fin}} \}$. 
 
Since $\prod_{v \in S} \mu_2(L_v)$ is a finite set, we can write $\prod_{v \in S} \mu_2(L_v)=\{(\pm 1,\cdots,\pm 1)\}$. Considering both $\prod_{v \in S} \mu_2(L_v)$ and $\prod_{v \notin S} \mu_2(L_v)$ as a natural subgroup of $\mu_2(\A)$, we have 
\[
\mu_2(L) \bs \mu_2(\A) \cong \bigsqcup_{a \in \prod_{v \in S} \mu_2(L_v)} \{(a, 1, \K_{\text{fin}})\}.
\]
and thus we have
\[W_{\wt{\varphi}}^{\wt{\chi}}(g)
=\sum_{a \in \prod_{v \in S} \mu_2(L_v)}\int_{ (a, 1, \K_{\text{fin}})}\int_{\U(L) \bs \U(\A)} \wt{\varphi}(u(\textbf{t} \cdot \wt{\e})g) \cdot \overline{\wt{\chi}(u(\textbf{t} \cdot \wt{\e}))} du d\textbf{t}. \]
Recall that this is a finite sum since $|\prod_{v \in S} \mu_2(L_v)|$ is finite. Therefore, if $W_{\wt{\varphi}}^{\wt{\chi}}(\textbf{1})\ne 0$, there exists some $a \in \prod_{v \in S} \mu_2(L_v)$ such that \[\overline{\wt{\chi}(a \cdot \wt{\e})}\cdot\big(\int_{ \K_{\text{fin}}}\overline{\wt{\chi}(\textbf{t} \cdot \wt{\e})} \cdot \int_{\U(L) \bs \U(\A)} \wt{\varphi}(u(a \cdot \wt{\e})(\textbf{t} \cdot \wt{\e})) \cdot \overline{\wt{\chi}(u)}dud\textbf{t}\big) \ne 0.\]
Since $\wt{\varphi}$ is $\K_{\text{fin}} \cdot \wt{\e}$-invariant, we have 
\[
\overline{\wt{\chi}(a \cdot \wt{\e})}\cdot\big(\int_{ \K_{\text{fin}}}\overline{\wt{\chi}(\textbf{t} \cdot \wt{\e})} d\textbf{t}\big) \cdot \int_{\U(L) \bs \U(\A)} \wt{\varphi}(u(a \cdot \wt{\e})) \cdot \overline{\wt{\chi}(u)}du \ne 0.
\]
As discussed in \cite[Section~{7.3}]{AG17}, we define the restriction map
\[
\Res : \mc{A}(\G_n) \to \mc{A}(\H_n), \quad \Res(\varphi)=\varphi \vert_{\H_n(\A)}.
\]
Write $\varphi=\Res\big((a \cdot \wt{\e}) \cdot \wt{\varphi} \big)$ and $\chi=\wt{\chi} \vert_{ \U(\A)}$. Then we have $W_{\varphi}^{\chi}(\textbf{1})\ne 0$. Moreover, Definition~\ref{cuspO} implies that the restriction of a cusp form on $\G_n(\A)$ to $\H_n(\A)$ is also a cusp form on $\H_n(\A)$. Therefore, there exists an irreducible cuspidal automorphic representation $\pi$ that appears in $\wt{\pi} |_{\H_n(\A)}$ such that $\varphi$ is a cusp form in the space of $\pi$. We conclude that $\pi$ is globally $\chi$-generic by definition. 
\end{proof}

Meanwhile, the converse of the above proposition holds in the following manner.
\begin{prop}\label{exgen}Let $\pi$ be an irreducible globally $\chi$-generic cuspidal representation of $\H_n(\A)$. Then there exists an automotphic character $\wt{\chi}$ of $\wt{\U}(\A)$ such that $\wt{\chi} \vert_{\U(\A)}=\chi$ and an irreducible globally $\wt{\chi}$-generic cuspidal representation $\wt{\pi}$ of $\G_n(\A)$ such that $\pi$ appears in $\wt{\pi} |_{\H_n(\A)}$. 
\end{prop}

\begin{proof}
Note that $\wt{\e}$ acts on $\H_n(\A)$ by a conjugation. By defining an action of $\wt{\e}\in \G_n(L)$ on $\pi$ as follows
\[(\wt{\e} \cdot \varphi)(h) \coloneqq \varphi({\wt{\e}}^{-1}h \wt{\e}), \quad \text{ for all } \varphi \in \pi, \ h \in \H_n(\A),
\]
we may view $\pi$ as a cuspidal representation of $\G_n(L) \cdot \H_n(\A)$.

Let $\Ind_{\G_n(L)\cdot \H_n(\A)}^{\G_n(\A)} (\pi)$ be the induced representation defined by the space of the functions $\Phi_{*}:\G_n(\A) \to \mathcal{H}(\pi)$, $g \mapsto \Phi_{g}$
such that
\begin{itemize}
    \item $\Phi_{hg}(x)=\Phi_g(xh)$ for $h \in \G_n(L)\cdot\H_n(\A)$, $g \in \G_n(\A)$, and $x\in \H_n(\A)$;
    \item for each $x \in \H_n(\A)$, the function $g \mapsto \Phi_g(x) \in \CC$ is a smooth function on $\G_n(\A)$.
\end{itemize}

As discussed in \cite[Remark~7.10]{AG17}, the map $\Phi_* \mapsto [g \mapsto \Phi_g(\textbf{1})]$ gives an embedding $\Ind_{\G_n(L)\cdot \H_n(\A)}^{\G_n(\A)} (\pi)$ into $\mc{A}_{cusp}(\G_n(\A))$. Choose an irreducible sub-representation  $\wt{\pi}$ in $\Ind_{\G_n(L)\cdot \H_n(\A)}^{\G_n(\A)} (\pi)$. Let $\Res\colon \mathcal{A}_{cusp}(\G_n(\A))\to \mathcal{A}_{cusp}(\H_n(\A))$, $\varphi\mapsto \varphi|_{\H_n(\A)}$ be the restriction map. Using the above embedding, we may regard $\wt{\pi}$ as an irreducible cuspidal representations of $\G_n(\A)$ such that $\Res(\wt{\pi})=\pi$.

Since $\pi$ is $\chi$-generic, there exists some $\varphi \in \pi$ such that $W_{\varphi}^{\chi}(\1)= \int_{\U(L) \bs \U(\A)} \varphi(u )\overline{\chi(u)} du\ne 0$. Choose an element $\wt{\phi} \in \wt{\pi}$ such that $\Res(\wt{\phi})=\varphi$. It can be expressed as a sum of pure tensors, namely,
$\wt{\phi}=\sum_{i=1}^\ell\wt{\phi}_i$, where each $\wt{\phi}$ is of the form $\wt{\phi}_{i}=\otimes_v \wt{\phi}_{i,v}$. Since $\Res(\wt{\phi})=\sum_{i=1}^\ell\Res(\wt{\phi}_i)=\varphi$ and $W_{\varphi}^{\chi}(\textbf{1}) \ne 0$, there is a $1\le j \le \ell$ such that $W_{\Res(\wt{\phi}_{j})}^{\chi}(\textbf{1}) \ne 0$. Let us denote this particular $\wt{\phi}_{j}$ by $\wt{\rho}$.

There is a finite set $S_0$ including all archimedean places of $L$ such that $\wt{\rho}$ is right $\big(\prod_{v \notin S_0} \mu_2(L_v)\big)\cdot \wt{\e}$-invariant. For each $v \in S_0$, decompose $\wt{\rho}_{v}=\wt{\rho}_{v,1}+\wt{\rho}_{v,2}$ (one of $\wt{\rho}_{v,i}$ might be zero) such that $\wt{\rho}_{v,i}$ is in $(-1)^{i+1}$-eigenspace of $\e_v$ in $\wt{\pi}_v$ for each $i=1,2$. Therefore, we can write $\wt{\rho}=\sum_{m=1}^k \wt{\rho}_m$ such that for each $\wt{\rho}_m=\otimes_v (\wt{\rho}_m)_v$, $(\wt{\rho}_m)_v$ is an eigen-vector of $\e_v$ for each $v \in S_0$.

Again, since $W_{\Res(\wt{\rho})}^{\chi}(\textbf{1}) \ne 0$ and $\Res(\wt{\rho})=\sum_{m=1}^k \Res(\wt{\rho}_m)$, there is a $1\le k_0 \le k$ such that $W_{\Res(\wt{\rho}_{k_0})}^{\chi}(\textbf{1}) \ne 0$. Denote $\wt{\rho}_{k_0}$ by $\wt{\varphi}_1$, which can be written as $\wt{\varphi}_1=\otimes_v \wt{\varphi}_{1,v} \in \wt{\pi}$. Let $t_0=(t_{0,v}) \in \prod_{v \in S_0}\mu_2(L_v)$ be given by \[t_{0,v} =
\begin{cases} 1, & \text{ if } \wt{\varphi}_{1,v} \text{ is an eigenvector of $\e_v$ with the eigenvalue 1}  \\ -1, & \text{ if } \wt{\varphi}_{1,v} \text{ is an eigenvector of $\e_v$ with the eigenvalue $-1$}.
\end{cases}
\]

Let $S$ be a subset of $S_0$ consisting of place $v$ such that $t_{0,v}=-1$. Since $(\wt{\e}\cdot \wt{\varphi}_1)(\textbf{1})=\wt{\varphi}_1(\textbf{1})$, we see that the number of elements in $S$ is even. 
We define a character $\wt{\chi}=\otimes_v \wt{\chi}_v$ of $\wt{\U}(\A)$ by $\wt{\chi}\vert_{\U(\A)}=\chi$ and \[\wt{\chi}(\textbf{t} \cdot \wt{\e})\coloneqq \prod_{v \in S}t_v, \quad \text{ for } \textbf{t}=(t_v) \in \mu_2(\A).\]
Since $\vert S \vert=\text{even}$, $\wt{\chi}$ is an automorphic character of $\wt{\U}(\A)$.

Then, we have 
\begin{align*}
W_{\wt{\varphi}_1}^{\wt{\chi}}(\mathbf{1})&=\int_{\mu_2(L) \bs \mu_2(\A)}\int_{\U(L) \bs \U(\A)} \wt{\varphi}_1(u(\textbf{t} \cdot \wt{\e}) )\cdot \overline{\wt{\chi}(u(\textbf{t} \cdot \wt{\e}))} du d\textbf{t}\\&=\int_{\mu_2(L) \bs \mu_2(\A)}\int_{\U(L) \bs \U(\A)} \wt{\varphi}_1(u)\cdot \overline{\wt{\chi}(u)} du d\textbf{t}\qquad(\text{since }\wt{\varphi}_1=\otimes_v\wt{\varphi}_{1,v}\text{ and } \prod_{v\in S}t_{0,v}=1)\\
&=\int_{\mu_2(L) \bs \mu_2(\A)}W_{\Res(\wt{\varphi}_1)}^\chi(\1)\,d\textbf{t}\ne0.
\end{align*} Hence, $\wt{\pi}$ is the globally $\wt{\chi}$-generic cuspidal representation of $\G_n(\A)$.
\end{proof}

\subsection{$\gamma$-factors}\label{gamma}
Let $\G$ be either $\H_n:=\SO_{2n}$ or $\J_m:=\Sp_{2m}$. Let $\pi$ be an irreducible generic representation of $\G(F)$ and $\sigma$ an irreducible generic representation of $\GL_k(F)$. Then the local twisted $\gamma$-factors $\gamma(s,\pi \times \sigma,\psi)$ are defined as the proportionality constants appearing in the functional equations of local Rankin--Selberg type integrals for $\G \times \GL_k$ or Langlands-Shahidi method. It is a product of a monomial of $t=q^{-s}$ and a rational function $\frac{Q(t)}{P(t)}$, where $Q(t),P(t) \in \CC[t]$ such that $Q(0)=P(0)=1$. For the precise definition, see \cite{Kap15} for local factors in terms of Rankin-Selberg type integrals and see \cite{Sha90, L15, L17} for local factors of Langlands-Shahidi method. In \cite{Kap15}, Kaplan demonstrated the equality of the definitions of twisted $\gamma$-factors via Rankin--Selberg integrals with those arising from the Langlands--Shahidi method as defined in \cite{Sha90}. Consequently, when $\cha(F)=0$, the choice of which definition of $\gamma$-factors to use becomes inconsequential.  In the case where $\cha(F)=p$, however, we employ the Langlands--Shahidi method to define twisted $\gamma$-factors since the definition using Rankin-Selberg integrals has not yet been established.

On the other hand, when $\G\coloneqq \GL_n$ and $\pi$ is an irreducible generic representation of $\G(F)$, $\gamma(s,\pi\times \sigma,\psi)$ is also defined as the Rankin-Selberg $\gamma$-factor of Jacquet, Piatetski-Shapiro and Shalika \cite{JSS83} (in the non-archimedean case) and  Jacquet and Shalika \cite{JacS90} (in the archimedean case) or $\gamma$-factors of Langlands-Shahidi. From the definition of $\gamma$-factors for $\GL_1 \times \GL_k$, it is easy to infer that \[\gamma(s,\chi \times \sigma,\psi)=\gamma(s,\mathbb{I} \times (\chi \otimes \sigma),\psi)\] for any character $\chi$ of $F^{\times}$ and $\sigma \in \text{Irr}(\GL_k(F))$. For brevity, we write $\gamma(s,\sigma,\psi)$ for $\gamma(s,\mathbb{I} \times \sigma,\psi)$.

It is worth mentioning that the Langlands-Shahidi method produces not only $\gamma$-factors but also local $L$-factors and local $\epsilon$-factors as follows:

Let us first consider the tempered case, that is, $\pi$ and $\sigma$ are tempered. Write $\gamma(s,\pi \times \sigma,\psi)=\alpha\cdot q^{-sk} \cdot \frac{Q(q^{-s})}{P(q^{-s})}$ for some $\alpha \in \CC^{\times}$ and polynomials $Q(t),P(t) \in \CC[t]$ such that $\text{gcd}\big(Q(t),P(t)\big)=1$ and $Q(0)=P(0)=1$. Then $L(s,\pi \times \sigma)$ and $\e(s,\pi \times \sigma, \psi)$ is defined by 
\begin{flalign}
\label{L-ftn}&L(s,\pi \times \sigma)\coloneqq Q(q^{-s})^{-1}\\
&\e(s,\pi \times \sigma, \psi) \coloneqq \gamma(s,\pi \times \sigma,\psi)\cdot \frac{L(s,\pi \times \sigma)}{L(1-s,\pi^{\vee} \times \sigma^{\vee})}.
\end{flalign}

In general, we follow the Langlands classification to define $L$-functions from Langlands-Shahidi method for any admissible generic representations $\pi$ and $\sigma$.

We recall some important properties of $\gamma$-factors of $\H_n \times \GL_k$ and $\J_m \times \GL_k$ that will be utilized later in our discussion. (Caution : the property (vi) does not exist in the $\cha(F)=p$ case.)

\begin{propt}[Properties of $\gamma$-factors in the generic case]\label{Property_gamma}
$ $ \
\begin{enumerate}
\item (Unramified twist) $\gamma(s,\pi \times \sigma|\det|_F^{s_0},\psi)=\gamma(s+s_0,\pi \times \sigma,\psi)$ $\quad \text{for } s_0 \in \RR.$
\item (Multiplicative property) Let $\P=\M\N$ be a parabolic subgroup of $\G$ such that $\M \cong \GL_{n_1}\times \cdots \times \GL_{n_r} \times \G'$, where $\G'$ is either $\SO_{2n'}$ or $\Sp_{2m'}$, which is the same type as $\G$. Let $\R= \M_{\R} \N_{\R}$ be a parabolic subgroup of $\GL_k$ such that $\M_{\R} \cong \GL_{k_1}\times \cdots \times \GL_{k_t}$.
Let $\tau_1 \otimes \cdots \otimes \tau_r \otimes \pi_0$ be an irreducible generic representation of $\M(F)$ and $\sigma_1 \otimes \cdots \otimes \sigma_t$ be an irreducible generic representation of $\M_{\R}(F)$.
Assume that $\pi$ (resp. $\sigma$) is an irreducible constituent of a parabolically induced representation $\Ind(\tau_1 \otimes \cdots \otimes \tau_r \otimes \pi_0)$ (resp. $\Ind(\sigma_1 \otimes \cdots \otimes \sigma_t)$ of $\M(F)$ (resp. $\M_{\R}(F)$.)
\[\gamma(s,\pi \times \sigma,\psi)=\gamma(s,\pi_0 \times \sigma,\psi)\prod_{i=1}^r \gamma(s,\tau_i \times \sigma,\psi)\gamma(s,\tau_i^{\vee} \times \sigma,\psi),\]
\[\gamma(s,\pi \times \sigma,\psi)=\prod_{i=1}^t \gamma(s,\pi \times \sigma_i,\psi).\]
\noindent 

\item (Dependence on $\psi$) Given $a\in F^{\times}$, denote by $\psi_{a}$ the character of $F$ given by $\psi_{a}(x) \coloneqq \psi(a x)$ for $x \in F$. Let $\omega_{\pi}, \omega_{\sigma}$ be the central character of $\pi$ and $\sigma$, respectively. Then,
\[\gamma(s,\pi \times \sigma,\psi_a)=\omega_{\pi}(a)^h \omega_{\sigma}(a)^k |a|_F^{hk(s-\frac{1}{2})} \gamma(s,\pi \times \sigma,\psi).
\]
Here, $h=2n$ if $\G=\H_n$; $h=2m+1$ if $\G=\J_m$.

\item (Unramified factors) When all data are unramified, we have \[\gamma(s,\pi \times \sigma,\psi)=\frac{L(1-s,\pi^{\vee}\times \sigma^{\vee})}{L(s,\pi \times \sigma)}.
\]

\item (Global property: Functional equation)
Let $K$ be a global field with a ring of adeles $\AA$ and $\Psi$ be a nontrivial character of $K \bs \AA$. Assume that $\Pi$ and $\Sigma$ are generic cuspidal representations of $\G(\A)$ and $\GL_k(\AA)$, respectively. Let $S$ be a finite set of places of $F$ such that for $v \notin S$, all data are unramified. Then 
\[L^S(s,\Pi \times \Sigma)=\prod_{v \in S} \gamma(s,\Pi_v \times \Sigma_v) L^S(1-s,\Pi^{\vee} \times \Sigma^{\vee}).
\]
Here, $L^S(s,\Pi \times \Sigma):= \displaystyle\prod_{v \notin S} L(s, \Pi_v \times \Sigma_v)$ is the partial $L$-function with respect to $S$.
\item (Archimedean property)
For an archimedean field $F$, 
\[\gamma(s,\pi \times \sigma,\psi)=\gamma^{\textrm{Artin}}(s,\pi \times \sigma,\psi).\]
Here, $\gamma^{\textrm{Artin}}(s,\pi \times \sigma,\psi)$ is the Artin $\gamma$-factor under the local Langlands correspondence.

%\item (Tempered $L$-functions)
%Let $\pi,\sigma$ be irreducible tempered representations of $\G(F)$ and $\GL_k(F)$. Let $P_{\pi \times \sigma}(t)$ be the unique polynomial with $P_{\pi \times \sigma}(0)=1$ such that $P_{\pi \times \sigma}(q^{-s})$ is the numerator of $\gamma(s,\pi \times \sigma,\psi)$. Then 
%\[L(s,\pi \times \sigma)=P_{\pi \times \sigma}(q^{-s})^{-1}.
%\]}

\item (Tempered $L$-function)
Let $\pi,\sigma$ be irreducible tempered representations of $\G(F)$ and $\GL_k(F)$. Then $L(s,\pi \times \rho)$ is holomorphic for $\Re(s)>0$.

\item (Functorial lift of $\H_1^*$)
Suppose that $\H_1^*$ is non-split but quasi-split $\SO(V_1)$. Let $\mathbb{I}$ be the trivial character of $\H_1^*(F)$. For any character $\chi$ of $F^{\times}$, \[\gamma(s,\mathbb{I} \times \chi,\psi)=\gamma(s,\chi,\psi)\cdot \gamma(s,\chi\cdot \chi_{V_1},\psi).\]
\end{enumerate}
\end{propt}
\noindent 
The properties (i)-(vi) are proved in \cite{Sha90} for the $\text{char}(F)=0$ case and in \cite{L15, L17} for the $\text{char}(F)=p$ case. Property (vii) is proved in \cite[page 573]{CS} and \cite[Theorem~1.1]{HO13} for the $\text{char}(F)=0$ case, and in \cite[Theorem~1.1]{CGL24} for the $\text{char}(F)=p$ case. Property (viii) is proved in \cite[Proposition~5.2, \S 7.2]{CPSS11} for the $\text{char}(F)=0$ case, and in \cite[Proposition~5.1.1, \S 3.4]{Ca22} for the $\text{char}(F)=p$ case.

%\begin{rem}
%The local unramified $L$-function appearing in Property \ref{Property_gamma} (iv) is defined as follows: Let $\pi$ be a unique unramified quotient of a principal series representation $\Ind(\mu_1 \otimes \cdots \otimes \mu_n)$, where each $\mu_i$ is an unramified character. In other words, it corresponds to the character
%\[\mathrm{diag}(t_1,\cdots,t_n,t_n^{-1},\cdots,t_1^{-1}) \mapsto \prod_{i=1}^n \mu_i(t_i).\]
%We also assume that $\sigma$ is a unique unramified quotient of a principal series representation induced from the unramified character $\bigotimes_{i=1}^k \chi_i$ of the maximal torus of $\GL_k(F)$. Then, $L(s,\pi \times \sigma)$ is defined as 
%\[\prod_{i,j}\left(\frac{1}{1-\mu_i(\varpi)\chi_j(\varpi)q^{-s}}\right)\cdot \left(\frac{1}{1-\mu_i(\varpi)^{-1}\chi_j(\varpi)q^{-s}}\right).\]
%\end{rem}

\begin{rem}\label{min}When $n=0$ and $m=0$, the $\gamma$-factors of $\H_0 \times \GL_k$ and $\J_0 \times \GL_k$ are defined as follows:\\
Put $\mathbb{I}_{V_0}$ (resp. $\mathbb{I}_{W_0}$) the trivial representation of $\H_0(F)$ (resp. $\J_0(F)$). Then for any irreducible generic representation $\sigma$ in $\text{Irr}(\GL_k)$,
\begin{flalign*} &\gamma(s,\mathbb{I}_{V_0} \times \sigma,\psi)\coloneqq 1 \\ &\gamma(s,\mathbb{I}_{W_0} \times \sigma,\psi)\coloneqq \gamma(s,\sigma,\psi).
\end{flalign*}
It is clear that these definitions are compatible with the local functorial lifting.
\end{rem}

%->I replaced the following in Lemma 2.2. (vii).

%Therefore, having defined the local $L$-functions and $\epsilon$-factors as in \cite[Section 7]{Sha90}, we have
%\begin{equation*}
  %\gamma(s,\pi \times \sigma,\psi)=\e(s,\pi \times \sigma,\psi)\cdot \frac{L(1-s,\pi^{\vee} \times \sigma^{\vee},\psi)}{L(s,\pi \times \sigma,\psi)}.  
%\end{equation*}

\begin{rem}\label{3gamma}
When $k=1$, Lapid and Rallis \cite{LR05} defined $\gamma$-factors for (possibly non-generic) irreducible smooth representations of $\G \times \GL_1$ which arise from the doubling method. Further, they proved `Ten Commandments' which refers to the
ten properties of $\gamma$-factors.
%Since the Rankin--Selberg $\gamma$-factors also satisfy the same properties, if we restrict the Rankin--Selberg $\gamma$-factors to $\G \times \GL_1$, then these two $\gamma$-factors coincide by the uniqueness property. 
Recently, Cai, Friedberg, and Kaplan made a breakthrough by generalizing Lapid-Rallis’s Ten Commandments to $\G\times \GL_k$ for $k\ge 1$ arising from the doubling method to the twisted doubling method in \cite{CFK}.
It is noteworthy that their $\gamma$-factors are defined not only for generic representations but also for non-generic representations of $\G(F) \times \GL_k(F)$ and they proved that three different definitions of $\gamma$-factors actually coincide for generic representations. In the proof of Theorem~\ref{thm:main}, we shall use the twisted $\gamma$-factors $\gamma(s,\pi \times \sigma,\psi)$ for (possibly) non-generic $\pi\in \text{Irr}(\J_m(F))$ and $\sigma \in \text{Irr}(\GL_k(F))$. However, due to the absence of a definition for the twisted $\gamma$-factor for non-generic representations of $\J_m(F)$ in the $\text{char}(F)=p$ case, we propose the following working hypothesis.

\begin{hyp}\label{hyp1}
The $\gamma$-factors for $\J_m \times \GL_k$ are properly defined in $\cha(F)=p$ case. Furthermore, they satisfy Property \ref{Property_gamma} (i)-(vii) (except (vi)).
\end{hyp}
\noindent When we refer the twisted $\gamma$-factor of $\J_m$ in $\cha(F)=p$ case, we work under the above working hypothesis.\end{rem}

Next, we consider the case of $\O_{2n}$ groups and define the $\gamma$-factors for $\O_{2n}\times \GL_k$. We first need the following lemma:
\begin{lem}\label{conj}For any irreducible generic representation $\pi$ of $\H_n(F)$ and $\sigma$ of $\GL_k(F)$, we have
\beq\label{tw}\gamma(s,\pi \times \sigma,\psi)=\gamma(s,\pi^{\e} \times \sigma,\psi).
\eeq
\end{lem}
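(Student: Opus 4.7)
The plan is to exploit the fact that, as recorded in Subsection~\ref{e}, $\e$ normalizes the unipotent radical $U$ of a Borel of $\H_n$ and fixes the generic character $\mu_{c'}$. Consequently $W\mapsto W^{\e}$, defined by $W^{\e}(g)=W(\e g\e^{-1})$, is an isomorphism of Whittaker models $\mc{W}(\pi,\mu_{c'})\xrightarrow{\sim}\mc{W}(\pi^{\e},\mu_{c'})$. I would then track this symmetry through the Rankin--Selberg integrals of Kaplan \cite{Kap15}, showing that the substitution $g\mapsto\e g\e^{-1}$ leaves them unchanged; this forces the same local functional equation for $\pi\times\sigma$ and $\pi^{\e}\times\sigma$, whence~\eqref{tw}.

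Concretely, the RS integrals of \cite{Kap15} pair a Whittaker function of $\pi$ against a section attached to $\sigma$, with integration over a quotient of subgroups of $\H_n$ constructed from the $\GL_k$ sitting in the Levi of the parabolic stabilizing $X_k$ together with auxiliary unipotent subgroups supported on the isotropic flag in $X_{n-1}$. Crucially, since $\e$ acts trivially on $X_{n-1}\oplus X_{n-1}^{*}$, it commutes with this $\GL_k$ and normalizes the relevant unipotent subgroups with trivial Jacobian. The change of variables $g\mapsto\e g\e^{-1}$ therefore turns the RS integral for $\pi^{\e}\times\sigma$ (built from $W^{\e}$) back into the one for $\pi\times\sigma$ (built from $W$), leaving the $\sigma$-side untouched. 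Applying the same observation on the dual integral side and using that the standard intertwining operator is insensitive to $\e$-conjugation on the $\GL_k$ factor, the two functional equations coincide, and the equality of $\gamma$-factors follows.

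A parallel argument should be available through the Langlands--Shahidi theory, which agrees with the RS definition for generic representations (as recalled in the remarks above): $\gamma(s,\pi\times\sigma,\psi)$ is the local coefficient attached to $\Ind_{P}^{\SO_{2(n+k)}}(\sigma|\det|^{s}\otimes\pi)$ for a suitable maximal parabolic $P$, and the extension $\wt{\e}\in\O_{2(n+k)}\setminus\SO_{2(n+k)}$ of $\e$ acting trivially on the $\GL_k$-factor intertwines this induced representation with $\Ind_{P}^{\SO_{2(n+k)}}(\sigma|\det|^{s}\otimes\pi^{\e})$ while preserving both the Whittaker functional and the standard intertwining operator used to define that coefficient. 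The only genuine technical step, in either formulation, is to verify that the integration cycle (respectively the Weyl element) can be arranged so that $\e$-conjugation is trivial on its $\GL_k$-component; this is immediate from the fact that $\e$ is supported on $V_2$.
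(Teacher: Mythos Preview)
Your approach is genuinely different from the paper's. The paper first treats the unramified case by writing $\pi$ as a subquotient of a principal series and noting that $\e$-conjugation only permutes the multiset $\{\chi_i,\chi_i^{-1}\}$, so multiplicativity gives the equality. In general, multiplicativity reduces to $\pi$ and $\sigma$ supercuspidal; if the supercuspidal support of $\pi$ has $V'=0$, then $\e$ conjugates one Siegel parabolic to another and multiplicativity applied to both yields~\eqref{tw}. Otherwise the paper globalizes: it chooses a totally imaginary number field $K$ with $K_{v_0}=F$ and cuspidal $\Pi,\Sigma$ with $\Pi_{v_0}=\pi$, $\Sigma_{v_0}=\sigma$, unramified away from $v_0$, and then deduces the identity at $v_0$ from the unramified case together with the global functional equation.

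Your direct local argument, exploiting that $\e$ fixes the Whittaker datum and acts trivially on $X_{n-1}\oplus X_{n-1}^*$, is more conceptual and avoids any global input. What the paper's method buys is that it never opens the integral: only multiplicativity and the global functional equation are used, both already among the listed axioms for $\gamma$.

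That said, calling the remaining technical step ``immediate'' is too optimistic. Your description of the RS integral is specific to the range $k\le n-1$; for $k\ge n$ the construction in \cite{Kap15} has a different shape (one embeds $\H_n$ rather than $\GL_k$), and the $\e$-equivariance must be checked there separately. On the Langlands--Shahidi side, the claim that $\wt{\e}$ preserves the intertwining operator comes down to choosing a representative of the relative long Weyl element in $\SO_{2(n+k)}$ commuting with $\wt{\e}$; since the naive swap of the two $k$-dimensional isotropic blocks has determinant $(-1)^k$, for odd $k$ any representative in $\SO_{2(n+k)}$ must act nontrivially on the $V$-block, and one must check both that this action can be taken to commute with $\e$ and that this choice is compatible with Shahidi's normalization of the local coefficient. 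These verifications are feasible but are not consequences of ``$\e$ is supported on $V_2$'' alone.
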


\begin{proof} 
First, we prove the case when $\pi$, $\sigma$ are unique unramified quotients of principal series representations induced from minimal parabolic subgroups. There is a Borel subgroup $\B=\T\U$ of $\H_n$ and characters $\{\chi_1,\cdots,\chi_n\}$ of $F^{\times}$ such that $\pi$ is a subquotient of the induced representation $\Ind_{\B(F)}^{\H_n(F)}(\chi_1 \otimes \cdots \otimes \chi_n)$. Similarly, $\pi^{\e}$ is a subquotient of an induced representation $\Ind_{\B(F)}^{\H_n(F)}(\chi_1' \otimes \cdots \otimes \chi_n')$ for some characters $\chi_i'$ of $F^{\times}$, for $i=1, \ldots, n$. Since conjugating the induced representation by $\e$ permutes the inducing characters and their inverses, we see
 \[ \{\chi_1,\cdots,\chi_n,\chi_1^{-1},\cdots,\chi_n^{-1}\}
=\{\chi_1',\cdots,\chi_n',\chi_1'^{-1},\cdots,\chi_n'^{-1}\}.
\] 
Then the multiplicative property of $\gamma$-factors implies 
\[\gamma(s,\pi \times \sigma,\psi)=\prod_{i=1}^n\gamma(s,\chi_i \times \sigma,\psi)\gamma(s,\chi_i^{-1} \times \sigma,\psi)=\prod_{i=1}^n\gamma(s,\chi_i' \times \sigma,\psi)\gamma(s,\chi_i'^{-1} \times \sigma,\psi)=\gamma(s,\pi^{\e} \times \sigma,\psi).
\]
Next, we prove the general case. There exists a parabolic subgroup $\P=\M\N$ of $\G$ with Levi component 
\[\M=\GL_{n_1} \times \cdots \times \GL_{n_r} \times \H_{n'}
\]
where $\H_{n'}=\SO(V')$ with $V'$ a $2n'$-dimensional symmetric space over $F$, and irreducible supercuspidal representation $\tau_i$ of $\GL_{n_i}(F)$, $i=1, \ldots, r$ and $\pi'$ of $\H_{n'}(F)$ such that $\pi$ is a subquotient of $\Ind_{\P(F)}^{\H_n(F)}(\tau_1 \otimes \cdots \otimes \tau_r \otimes \pi')$. Note that
\[\Ind_{\P(F)}^{\H_n(F)}(\tau_1 \otimes \cdots \otimes \tau_r \otimes \pi')^{\e}=\begin{cases}\Ind_{\P(F)}^{\H_n(F)}(\tau_1 \otimes \cdots \otimes \tau_r \otimes (\pi')^{\e}), \quad \text{if $V' \ne 0$}\\
\Ind_{\P^{\e}(F)}^{\H_n(F)}(\tau_1 \otimes \cdots \otimes \tau_r), \quad \quad \quad \quad \ \text{if $V'=0$},
\end{cases}
\]
where $\P^{\e}$ is the $\e$-conjugate of $\P$. \par If $V'=0$, then $\P^{\e}$ is another Siegel parabolic subgroup of $\H_n$ and hence by applying the multiplicative property of $\gamma$-factors both to $\P^{\e}$ and $\P$, we have
\[\gamma(s,\pi^{\e} \times \sigma,\psi)=\prod_{i=1}^r\gamma(s,\tau_i \times \sigma,\psi)\gamma(s,\tau_i^{\vee} \times \sigma,\psi)=\gamma(s,\pi \times \sigma,\psi).
\]
Suppose that $V' \ne 0$. Then by the multiplicative property of $\gamma$-factors, we may assume that $\pi$ and $\sigma$ are supercuspidal. Now we use a standard global-to-local argument. In the characteristic zero case (resp. positive characteristic case), \cite[Proposition 5.1]{Sha90} (resp. \cite[Theorem 1.1]{GL}) implies that we have the following data:

%\par Suppose we have the following data.
\begin{itemize}
\item $L$, a number field (resp. global function field) such that $L_{v_0}=F$ for some finite place $v_0$ of $L$
\item $\Psi$, a nontrivial additive character of $L \bs \A$ (where $\A$ is the ring of adeles of $L$) such that $\Psi_{v_0}=\psi$
\item $\mathbb{V}$, a quadratic space over $L$ of dimension $2n$ such that $\mathbb{V}_{v_0}=V$
\item $\Pi$ and $\Sigma$, a globally generic cuspidal representations of $\SO(\VV)(\A)$ and $\GL_r(\A)$ respectively such that $\Pi_{v_0}=\tau$ and $\Sigma_{v_0}=\sigma$ and for all  places $v \ne v_0$ of $L$, $\Pi_{v}$ and $\Sigma_{v}$ are unramified. 
%principal series representation induced from a minimal parabolic subgroup.
\end{itemize}

%(Note that a simple way to find such $\Pi$ is to use a construction of Henniart \cite[Appendice 1]{He} via Poincare series.)

Using a similar argument as in the unramified case, we see that the equality (\ref{tw}) also holds for all archimedean places $v$ of $L$ because $\pi_v$ is the generic quotient of a principal series representation $I(\pi_v)$. Then Property \ref{Property_gamma}(v) and the equality (\ref{tw}) at all places $v$ except for $v_0$ imply
\[\gamma(s,\pi \times \sigma,\psi)=\gamma(s,\Pi_v \times \Sigma_v,\Psi_v)=\gamma(s,\Pi_v^{\e} \times \Sigma_v,\Psi_v)=\gamma(s,\pi^{\e} \times \sigma,\psi).\]
This completes the proof of the lemma.
\end{proof}

\begin{defn} \label{Ogamma}
For $\wt{\pi} \in \Irr(\G_n(F))$, choose an irreducible sub-representation $\pi$ of $\wt{\pi} \mid_{\H_n(F)}$, the restriction of $\wt{\pi}$ to $\H_n(F)$. Assume that $\wt{\pi}$ is $\mu_{c'}^{\pm}$-generic. Then for an arbitrary element $\sigma$  in $ \Irr(\GL_r(F))$, define
\[
\gamma(s,\wt{\pi} \times \sigma,\psi)
\coloneqq
\gamma(s,\pi\times \sigma,\psi),
\]
where $\gamma(s,\pi\times \sigma,\psi)$ is defined by Rankin-Selberg type integrals \cite{Kap15} or Langlands-Shahidi methods \cite{Sha90, L15, L17}. 
\end{defn}
By Proposition~\ref{Clifford}, if $\wt{\pi} |_{\H_n(F)}$ is reducible, it decomposes into two irreducible representations of $\H_n(F)$, which are $\e$-conjugate to each other. By Lemma~\ref{conj}, the two $\gamma$-factors associated with each component of $\wt{\pi}|_{\H_n(F)}$ are the same, and therefore, $\gamma(s,\wt{\pi} \times \sigma,\psi)$ is well-defined. With this definition, we also have
\begin{equation}\label{det}
\gamma(s,\wt{\pi} \times \sigma,\psi)=\gamma(s,(\wt{\pi}\otimes \det) \times \sigma,\psi).
\end{equation}

\begin{rem}
When $F$ is an archimedean local field, we can define $\gamma(s,\wt{\pi} \times \sigma,\psi)$ in the same way as in the non-archimedean case because the proof of Lemma~\ref{conj} also covers the archimedean case. As a result, based on Property \ref{Property_gamma}(i)--(vii) of the $\gamma$-factors for $\H_n \times \GL_k$ and Proposition~\ref{regen}, it is straightforward to verify that the $\gamma$-factors for $\G_n \times \GL_k$ also satisfy these properties. Therefore, when there is no confusion, we will use the same notation $\gamma(s,\pi \times \sigma,\psi)$ and $\gamma(s,\wt{\pi}\times \sigma,\psi)$.
\end{rem}

\subsection{Extension of Jo's result}
Now we recall a local converse theorem for $\Sp_{2m}$ obtained in \cite{Jo}.

\begin{thm}[\cite{Jo}] \label{Sp} Let $\pi$ and $\pi'$ be irreducible $\mu_1'$-generic representations of $\J_m(F)$ with the same central characters such that
\[\gamma(s,\pi \times \rho,\psi)=\gamma(s,\pi' \times \rho,\psi)\] holds for any irreducible supercuspidal representation $\rho$ of $\GL_i(F)$ with $1\le i \le m$. Then $\pi\simeq\pi'$.
\end{thm}

We extend the above result to general $\mu_{\lambda}'$-generic representations of $\J_m(F)$. %To facilitate this extension, we introduce the additive character $\psi_{\lambda}$ of $F$, defined as $\psi_{\lambda}(x) \coloneqq \psi(\lambda x)$ for any $\lambda \in F^{\times}$. 
In order to accomplish our goal, we rely on the following lemma.

\begin{lem}\label{prop:mu_d} A $\T'(F)$-orbit of irreducible $\mu_{\lambda}'$-generic representations with respect to $\psi$ is equals to a $\T'(F)$-orbit of irreducible admissible $\mu_{1}'$-generic representations with respect to $\psi_{\lambda}$.
\end{lem}

\begin{proof}
   Let $\pi$ be an irreducible $\mu_{\lambda}'$-generic representation of $\J_n(F)$ with respect to $\psi$. We aim to show that $\pi$ is $\mu_1'$-generic with respect to $\psi_{\lambda}$.

Consider the action of $t \in \T'(F)$ on a generic character $\chi'$ of $\U'(F)$, given by $(\chi')^t(u') = \chi'(t^{-1}u't)$. Specifically, for $t \in \T'(F)$ with $t(f_i) = t_if_i$ and $t(f_i^*) = t_i^{-1}f_i^*$ $(1 \leq i \leq n)$, we have
$$(\mu_{\lambda}')^t(u') = \psi\left(\sum_{i=1}^{n-1}t_i^{-1}t_{i+1}\langle u'f_i,f_{i+1}^*\rangle + \lambda t_n^{-2}\frac{\langle u'f_n^*,f_n^*\rangle}{2}\right).$$

In particular, if we choose $\underline{t} \in \T'(F)$ specifically with $t_i = \lambda^{-(n-i)}$, then we obtain
$$(\mu_{\lambda}')^{\underline{t}}(u') = \psi_{\lambda}\left(\sum_{i=1}^{n-1}\langle u'f_i,f_{i+1}^*\rangle + \frac{\langle u'f_n^*,f_n^*\rangle}{2}\right).$$

Therefore, a $(\mu_{\lambda}')^{\underline{t}}$-generic representation (with respect to $\psi$) is $\mu_1'$-generic with respect to $\psi_{\lambda}$. Since the notion of genericity is invariant under $\T'(F)$-orbits, it follows that $\pi$ is $\mu_1'$-generic with respect to $\psi_{\lambda}$.
\end{proof}
Based on Jo's result and the lemma mentioned above, we obtain the following.
\begin{thm} \label{esp} Let $\pi$ and $\pi'$ be irreducible $\mu_{\lambda}'$-generic representations of $\J_m(F)$ with the same central characters such that
\[\gamma(s,\pi \times \rho,\psi)=\gamma(s,\pi' \times \rho,\psi)\] holds for any irreducible supercuspidal representation $\rho$ of $\GL_i(F)$ with $1\le i \le m$. Then $\pi\simeq\pi'$.
\end{thm}

\section{Local theta correspondence for $(\O(V_n),\Sp(W_m)$)}\label{theta}

In this section, we introduce the local theta correspondence induced by the Weil representation of $\G_n(F) \times \J_m(F)$, denoted by $\omega_{\psi,V_n,W_m}$. Here, $\psi$ is a fixed non-trivial additive character of $F$, and $V_n$ and $W_m$ are vector spaces of dimensions $2n$ and $2m$, respectively. In what follows, we will recall some fundamental results related to the local theta correspondence.

%In this section, we introduce the local theta correspondence induced by the Weil representation $\omega_{\psi,V_n,W_m}$ of $\G_n(F) \times \J_m(F)$ with $\dim(V_n)=2n$ and $\dim(W_m)=2m$. We also recall some basic results related to it.

%We fix a non-trivial additive character $\psi$ of $F$. Then the group $\G_n(F) \times \J_m(F)$ has a natural representation $\omega_{\psi,V_n,W_m}$,  called the Weil representation of $\G_n(F) \times \J_m(F)$.

For $\wt{\pi}\in\Irr(\G_n(F))$, the maximal $\wt{\pi}$-isotypic quotient of $\omega_{\psi,V_n,W_m}$
is of the form
\[
\wt{\pi}\boxtimes \Theta_{\psi,V_n,W_m}(\wt{\pi}),
\]
for some smooth finite length representation $\Theta_{\psi,V_n,W_m}(\wt{\pi})$ of $\J_m(F)$, called the big theta lift of $\wt{\pi}$. The maximal semisimple quotient of $\Theta_{\psi,V_n,W_m}(\wt
{\pi})$ is called the small theta lift of $\wt{\pi}$. %By restricting $\omega_{\psi,V_n,W_m}$ to $\H_n(F) \times \J_m(F)$, for $\pi \in\Irr(\H_n(F))$, define $\Theta_{\psi,V_n,W_m}(\pi)$ 
Changing the role of $V_n$ and $W_m$, for $\tau\in\Irr(\J_m(F))$, 
we obtain a smooth finite length representation 
$\Theta_{\psi,W_m,V_n}(\tau)$ of $\G_n(F)$.
The maximal semi-simple quotient of $\Theta_{\psi,W_m,V_n}(\tau)$ (resp. $\Theta_{\psi,V_n,W_m}(\wt{\pi})$)
is denoted by $\theta_{\psi,W_m,V_n}(\tau)$ 
(resp. $\theta_{\psi,V_n,W_m}(\wt{\pi})$) and is called the small theta lift of $\wt{\pi}$.

%\begin{rem}For $\pi \in\Irr(\H_n(F))$, let $\wt{\pi}$ be an irreducible constituent of $\Ind_{\H_n(F)}^{\G_n(F)}(\pi)$. From Proposition~\ref{Clifford}, we infer that
%\[\Theta_{\psi,V_n,W_m}(\pi)=\begin{cases} \Theta_{\psi,V_n,W_n}(\wt{\pi})\oplus \Theta_{\psi,V_n,W_n}(\wt{\pi}\otimes \det), \quad \text{ if } \pi \cong \pi^{\e}\\
%\Theta_{\psi,V_n,W_n}(\wt{\pi}), \quad \quad \quad \quad \quad \quad \quad \quad \quad \quad \ 
%\text{ if } \pi \ncong \pi^{\e}.\end{cases}\]
%\end{rem}

The following theorem and proposition are independent of the local Langlands correspondence for $\G_n$.

\begin{thm}[Howe duality, \cite{GT1, GT2},\cite{Wa90}]\label{Howe duality}
Let $\wt{\pi}\in\Irr(\G_n(F))$ and $\tau \in \Irr(\J_m(F))$. If $\Theta_{\psi,V_n,W_n}(\wt{\pi})$ and $\Theta_{\psi,W_n,V_n}(\tau)$ are nonzero, then $\theta_{\psi,V_n,W_m}(\wt{\pi})$ and $\theta_{\psi,W_m,V_n}(\tau)$ are irreducible. Moreover, for $\wt{\pi_1},\wt{\pi_2} \in \Irr(\G_n(F))$ which occur as quotients of $\omega_{\psi,V_n,W_m}$, if $\theta_{\psi,V_n,W_m}(\wt{\pi_1})\simeq \theta_{\psi,V_n,W_m}(\wt{\pi_2})$, then $\wt{\pi_1} \simeq \wt{\pi_2}$.
\end{thm}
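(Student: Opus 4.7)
The plan is to follow the standard reduction of Howe duality to the supercuspidal case, and then to handle that case via a doubling see-saw argument in the spirit of Gan--Takeda.

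First, I would reduce to supercuspidal $\pi$ and $\tau$. Suppose $\pi$ occurs as an irreducible subquotient of a parabolic induction $\Ind_{\P(F)}^{\G_n(F)}(\tau_1\otimes\cdots\otimes\tau_r\otimes\pi_0)$, where each $\tau_i$ is supercuspidal on $\GL_{k_i}(F)$ and $\pi_0$ is supercuspidal on a smaller $\G_{n'}(F)$. Kudla's filtration of the Jacquet module of $\omega_{\psi,V_n,W_m}$ along the unipotent radical of the parallel parabolic on the $\J_m$-side has successive quotients expressible in terms of $\tau_i,\tau_i^\vee$ and smaller Weil representations $\omega_{\psi,V_{n'},W_{m'}}$. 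Frobenius reciprocity then forces every irreducible quotient of $\Theta_{\psi,V_n,W_m}(\pi)$ to be the Langlands quotient of a standard module whose cuspidal support is determined by the $\tau_i$'s and by an irreducible quotient of $\Theta_{\psi,V_{n'},W_{m'}}(\pi_0)$. Hence irreducibility and injectivity for $\pi_0$ propagate to $\pi$, and symmetrically on the $\J_m$-side for $\tau$.

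Second, for supercuspidal $\pi$, I would work with the doubled space $V_n\oplus V_n^-$ (where $V_n^-$ denotes $V_n$ equipped with the negated form) and analyze
\[
\Hom_{\G_n(F)\times\G_n(F)}\!\bigl(\omega_{\psi,V_n\oplus V_n^-,W_m}\otimes(\pi\boxtimes\pi^\vee),\,\CC\bigr)
\]
via the see-saw pair $(\G_n\times\G_n\subset\O(V_n\oplus V_n^-),\ \J_m\subset\J_m\times\J_m)$. Supercuspidality of $\pi$ annihilates all but the top layer of Kudla's filtration on the doubled side, so the remaining Hom space is controlled by the doubling zeta integral of Piatetski-Shapiro--Rallis. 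A careful pole analysis yields $\dim\Hom_{\G_n(F)}(\Theta_{\psi,V_n,W_m}(\pi),\pi')\le 1$ for every irreducible $\pi'$ occurring as a quotient, and in particular forces $\Theta_{\psi,V_n,W_m}(\pi)$ to admit a unique irreducible quotient $\theta_{\psi,V_n,W_m}(\pi)$. For injectivity, if $\theta(\pi_1)\simeq\theta(\pi_2)$ for supercuspidal $\pi_1,\pi_2$, the same see-saw identity produces a nonzero $\G_n(F)$-equivariant map $\pi_1\to\pi_2$, which is an isomorphism by Schur's lemma.

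The main obstacle is the supercuspidal case: pinning down precisely which filtration layers contribute requires an induction on $\min(n,m)$ together with the conservation relation for first occurrence indices in the theta tower. Gan--Takeda carry this out uniformly in all residual characteristics, while Waldspurger's earlier argument (in odd residue characteristic) proceeds via an orbit analysis on an associated symmetric algebra. Since the present paper only uses Howe duality as an input for the converse theorem, I would invoke it as a black box rather than reproduce the full argument here.
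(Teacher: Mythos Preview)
The paper does not give its own proof of this theorem: it is stated with citations to \cite{GT1, GT2} and \cite{Wa90} and then used as a black box. Your final paragraph is exactly right and matches what the paper does; the preceding sketch of the Kudla-filtration reduction and the doubling see-saw argument is a reasonable outline of the Gan--Takeda strategy, but it goes well beyond anything the paper attempts, and in this context the appropriate ``proof'' is simply the citation.
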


\begin{prop}\label{tem0} Let $\wt{\pi} \in \Irr_{\temp}(\G_n(F))$. If $\Theta_{\psi,V_n,W_n}(\wt{\pi})$ is non-zero, then $\Theta_{\psi,V_n,W_n}(\wt{\pi})$ is an irreducible tempered representation of $\J_n(F)$.
\end{prop}
\begin{proof} In $\cha(F)=0$, it is a part of \cite[Proposition  C.4]{GI1}. The key ingredients of the proof therein are the Howe duality and the Kudla's filtrartion on the normalized Jacquet module of the Weil representation. However, Howe duality does hold for $\cha(F)\ne 2$ case and the Kudla's computation on the Jacquet module of the Weil representation also holds for $\cha(F) \ne 2$ (see \cite[III.8]{Ku}). Except for these, other arguments in \cite[Theorem C.4]{GI1} equally apply to $\cha(F) =p$ case too.
\end{proof}

%\begin{rem}\cite[Theorem C.5]{GI1} is stated not only for tempered representations but also for general irreducible representations of $\H_n(F)$. The proof of \cite[Theorem C.5]{GI1} begins with tempered cases first and extends to general cases using \cite[Theorem C.4 (ii)]{GI1}. However, it is not clear that \cite[Theorem C.4 (ii)]{GI1} guarantee the non-vanishing of the big theta lift $\Theta_{\psi,V_n,W_n}(\wt{\pi})$ when $\wt{\pi}$ is not tempered. Therefore, for safety, we state the result by restricting it to tempered cases. Fortunately, the above proposition turns out enough to achieve our goal. Indeed, this accounts for dividing our proof of the main theorem into two steps.
%\end{rem}

%\begin{prop}[{\cite[Section 5.2]{GI1}}] Let $\wt{\pi} \in \Irr_{\temp}(\G_n(F))$. If $\Theta_{\psi,V_n,W_m}(\wt{\pi})$ is non-zero, put $\tau=\theta_{\psi,V_n,W_m}(\wt{\pi})$. Then the central characters $w_{\wt{\pi}}$ and $w_{\tau}$ of $\wt{\pi}$ and $\tau$, respectively, are related as $w_{\tau}=w_{\wt{\pi}}\cdot \chi_{V_n}^{m}$. 
%\end{prop}

Now we can prove the following proposition. In the case of a split $\J_n(F)$, it has been proved in \cite[Corollary~2.5]{GRS} that when $\tau\in\textrm{Irr}(\J_n(F))$ is $\mu_\lambda'$-generic and $\Theta_{\psi,W_n,V_n}(\tau)$ is nonzero, then $\theta_{\psi,W_n,V_n}(\tau)$ is $(\mu_{-\lambda})^{-1}$-generic.

%The following theorem is a consequence of some results in \cite{GI1} and \cite{MS}.
\begin{prop}\label{Theta1} Let $\wt{\pi} \in \Irr_{\temp}(\G_n(F))$.
\begin{enumerate}
\item If $\wt{\pi}$ is $\mu_{c'}^{+}$-generic, then $\Theta_{\psi,V_n,W_n}(\wt{\pi})$ is nonzero and $\theta_{\psi,V_n,W_n}(\wt{\pi})$ is $(\mu_{-c'}')^{-1}$-generic. 
\item If $\wt{\pi}$ is $\mu_{c'}^{-}$-generic but not $\mu_{c'}^{+}$-generic, then $\Theta_{\psi,V_n,W_n}(\wt{\pi})$ is zero or $\theta_{\psi,V_n,W_n}(\wt{\pi})$ is not $(\mu_{-c'}')^{-1}$-generic. 
\end{enumerate}
\end{prop}

\begin{proof} 
%For an algebraic group $G$, we simply write its $F$-point $G(F)$ by $G$ to ease the notation. 
Write $(\omega_{\psi,V_n,W_n})_{\U'(F),(\mu_{-c'}')^{-1}}$ for the twisted Jacquet module of $\omega_{\psi,V_n,W_n}$ with respect to $\U'(F)$ and $(\mu_{-c'}')^{-1}$ (i.e. the quotient space $\omega_{\psi,V_n,W_n} / \mc{V}$, where $\mc{V}$ is a subspace  spanned by $\{\omega_{\psi,V_n,W_n}(u)\cdot \phi-(\mu_{-c'}')^{-1}(u)\cdot \phi\}_{u\in \U', \phi \in \omega_{\psi,V_n,W_n}}$.) By Theorem~\ref{thm:tjw1}, we have
\begin{align*}\Hom_{\G_n(F)\times \U'(F)}\big(\omega_{\psi,V_n,W_n}, \ \wt{\pi} \otimes (\mu_{-c'}')^{-1}\big) \cong \Hom_{\G_n(F)}\big((\omega_{\psi,V_n,W_n})_{\U'(F),(\mu_{-c'}')^{-1}}, \ \wt{\pi}\big)\cong \Hom_{\G_n(F)}\big(\ind_{\wt{\U}(F)}^{\G_n(F)} (\mu_{c'}^{+}), \ \wt{\pi} \big)\\
\cong \Hom_{\wt{\U}(F)}\big(\mu_{c'}^{+}, \ (\wt{\pi}^{\vee}\mid_{\wt{\U}(F)})^{\vee} \big) \cong \Hom_{\wt{\U}(F)}\big(\wt{\pi}^{\vee}, \ (\mu_{c'}^{+})^{\vee} \big)\cong \Hom_{\wt{\U}(F)}\big(\wt{\pi}, \ \mu_{c'}^{+} \big),
\end{align*}
where the last equality follows from the facts that $\wt{\pi}$ and $\mu_{c'}^{+}$ are unitary.\par 
On the other hand, 
\[\Hom_{\G_n(F) \times \U'(F)}\big(\omega_{\psi,V_n,W_n}, \ \wt{\pi} \otimes (\mu_{c-'}')^{-1}\big)\cong \Hom_{\U'(F)}\big(\Theta_{\psi,V_n,W_n}(\wt{\pi}), \  (\mu_{-c'}')^{-1}\big).
\]
If $\wt{\pi}$ is $\mu_{c'}^{+}$-generic, then $\Hom_{\wt{\U}(F)}\big(\wt{\pi}, \ \mu_{c'}^{+} \big) \ne 0$, and henceforth, %\brown{Since any non-zero homomorphism between $\Theta_{\psi,V_n,W_n}(\wt{\pi})$ and $(\mu_{-c'}')^{-1}$ would induce a non-zero homomorphism between $\wt{\pi}$ and $\mu_{c'}^{+}$, it follows that} 
$\Hom_{\U'(F)}\big(\Theta_{\psi,V_n,W_n}(\wt{\pi}), \  (\mu_{-c'}')^{-1}\big) \ne 0$. Therefore, we have $\Theta_{\psi,V_n,W_n}(\wt{\pi})\ne 0$, and by Proposition~\ref{tem0},  $\Hom_{\U'(F)}\big(\theta_{\psi,V_n,W_n}(\wt{\pi}), \  (\mu_{-c'}')^{-1}\big) \ne 0$. This proves (i).\par
If $\wt{\pi}$ is not $\mu_{c'}^{+}$-generic, then $\Hom_{\wt{\U}(F)}\big(\wt{\pi}, \ \mu_{c'}^{+} \big)=0$, and from the above argument we have \[\Hom_{\U'(F)}\big(\Theta_{\psi,V_n,W_n}(\wt{\pi}), (\mu_{-c'}')^{-1}\big)=0\]
which proves (ii).
\end{proof}

\begin{rem} Proposition~\ref{Theta1} can be obtained using the local Langlands correspondence for $\G_n$ (\cite{AG17}) and $\J_n$ (\cite{Ar}). However, since our other goal is to prove our main theorems without using Arthur's results, we prove it by computing the twisted Jacquet module of Weil representation, which is independent of Arthur's results.
\end{rem}

%For instance, the first statement is a special case of \cite[Proposition C.4]{GI1}. The proof of the second statement involves the local Langlands correspondence for $\O(V)$ proved by Arthur (\cite{Ar}),  Atobe--Gan (\cite{AG}), and the Prasad conjecture for $(\O(V_{2n}),\Sp(W_{2n}))$ proved by Atobe--Gan \cite{AG}. Since the precise statements of these two require many preliminaries, we prove the second statement by citing relevant results in the references.\\ If $\wt{\pi}'$ is $\mu_{c'}^{-}$-generic and not $\mu_{c'}^{+}$-generic, this contradicts to \cite[Desideratum 3.9 (4),(5)]{AG}, \cite[Desideratum 3.12 (3)]{AG} and \cite[Conjecture 4.2 (1)]{AG}.
%Therefore, $\wt{\pi}'$ is $\mu_{c'}^{+}$-generic and hence by \cite[Conjecture 4.2]{AG}, $\tau$ is $\mu_{c'}'$-generic. (Note that since $c' \in c N_{E/F}(E^{\times})$, Desideratum 3.9, Desideratum 3.12 and Conjecture 4.2 in \cite{AG} are completely proved in \cite[Theorem 3.17]{AG}, \cite[Corollary 3.18]{AG} and \cite[Theorem 4.4]{AG}, respectively.
%\end{rem}
The following is an easy consequence of Lemma~\ref{gen} and Proposition~\ref{Theta1}.
\begin{cor}[{\cite[Cor~9.3]{MS20}}] Let $\pi \in \Irr_{\temp}(\H_n(F))$. If $\pi$ is $\mu_{c'}$-generic, then there is a unique $\mu_{c'}^{+}$-generic irreducible constituent $\wt{\pi}$ of $\Ind_{\H_n(F)}^{\G_n(F)}(\pi)$ such that $\Theta_{\psi,V_n,W_n}(\wt{\pi})$ is nonzero and $\theta_{\psi,V_n,W_n}(\wt{\pi})$ is $(\mu_{-c'}')^{-1}$-generic. 
\end{cor}

%\begin{rem}\label{pm}From the proof of Theorem~\ref{Theta1} (ii), if $\wt{\pi}$ is $\mu_{c'}^{\pm}$-generic, we can take \[\wt{\pi}'=\begin{cases} \wt{\pi}\quad \quad \quad  \text{if  $\wt{\pi}$ is $\mu_{c'}^{+}$-generic},\\\wt{\pi}\otimes \det \text{ if  $\wt{\pi}$ is $\mu_{c'}^{-}$-generic and not $\mu_{c'}^{+}$-generic}.\end{cases\]so that $\wt{\pi}'$ is $\mu_{c'}^{+}$-generic.\end{rem}

\section{$\gamma$-factors and the local theta correspondence}
\label{gammacf}

In this section, we examine the relationship of $\gamma$-factors under the local theta correspondence for $(\O_{2n},\Sp_{2m})$.

\noindent Let $\wt{\pi} \in \Irr(\G_n(F))$, $\tau \in \Irr(\J_m(F))$. Suppose that they correspond to each other under the local theta correspondence for $(\G_n,\J_m)$. %Recall that we have the definition of twisted $\gamma$-factors for $\wt{\pi}$ and $\tau$, respectively. 
In this section, we establish a precise relation between the two twisted $\gamma$-factors of $\wt{\pi}$ and $\tau$.
Consider a unitary induced representation 
\begin{equation}\label{eq4_1}
\Ind_{\P(F)}^{\G_n(F)}
(\rho_1|\det|_F^{s_1}\otimes\cdots\otimes \rho_r|\det|_F^{s_r}\otimes \wt{\pi_0}),
\end{equation}
where
\begin{itemize}
\item
$V=V_{n}$ and $V_0 = V_{n_0}$ are symmetric spaces of dimension $2n$ and $2n_0$, 
respectively;
\item
$\P$ is a parabolic subgroup of $\G_n$ with the Levi subgroup isomorphic to
$\GL_{n_1}\times\cdots\times\GL_{n_r}
\times \G_{n_0}$; 
\item
$\rho_i$ is an irreducible unitary supercuspidal representation of $\GL_{n_i}(F)$;
\item
$s_i$ is a real number such that $s_1 \geq \dots \geq s_r \ge 0$; 
\item
$\wt{\pi_0}$ is an irreducible $\mu_{c'}^{\pm}$-generic tempered representation of $\G_{n_0}(F)$.
\end{itemize}

It is well-known and can be easily verified from \cite[Theorem 4.2]{BJ01} and \cite[Lemma 4.11]{Jo} that an irreducible $\mu_{c'}^{\pm}$-generic representation $\wt{\pi}$ of $\G_n(F)$ is a subquotient of the form (\ref{eq4_1}) (See \cite{BJ01} and \cite[Lemma 4.11]{Jo}).
%\blue{We need to be a bit careful but seems to be okay. First of all, Lemma 4.11 of \cite{Jo} is so-called Jacquet subquotient theorem + Weyl group action on the induced representation and it is true for connected reductive groups without the condition on the exponents and true for all connected (maybe quasi) classical groups and $GSpin$ groups. We don't know whether this is true for orthogonal groups. But one good thing about Jo's results is that he explicitly wrote everything detail so that we can check whether all those are true for orthogonal groups as well. I feel that (if I can find) it is good to find the relation between the representation of orthogonal groups and special orthogonal groups so that we can get those results from the results for special orthogonal groups.} 
Then we say that $\wt{\pi}$ has a tempered support $(P, \rho_1,\cdots,\rho_r;\wt{\pi_0})$ with exponents $(s_1,\cdots,s_r)$. Especially, when $\wt{\pi_0}$ is supercuspidal, we say that $\wt{\pi}$ has supercuspidal support $(P, \rho_1,\cdots,\rho_r;\wt{\pi_0})$ with exponents $(s_1,\cdots,s_r)$. Similarly, we can define the supercuspidal support with exponents of an irreducible $\mu_{c'}'$-generic representation of $\J_m(F)$. The notion of supercuspidal support can also be defined without exponents by allowing the inducing representations $\rho_i$ to be supercuspidal (not necessarily unitary). It is also well-known that supercuspidal supports are uniquely determined up to conjugacy class of Weyl group elements. We note that the central characters and $\mu_{c'}^{\pm}$-genericity of $\wt{\pi}$ and $\wt{\pi_0}$ are the same.

The following is a summary of Kudla's supercuspidal support theorem.

\begin{prop}[{\cite[Theorem 7.1]{Ku}} or {\cite[Proposition 5.2]{GI1}}]\label{ksc} 
Let $\wt{\pi} \in \Irr(\G_n(F))$ be such that $\Theta_{\psi,V_n,W_m}(\wt{\pi})$ is nonzero. Put $\tau=\theta_{\psi,V_n,W_m}(\wt{\pi})$. We assume that $\wt{\pi} \in \Irr(\G_n(F))$ and $\tau \in \Irr(\J_m(F))$ have supercuspidal supports $(\rho_1,\cdots,\rho_r;\wt{\pi_0})$ with $\wt{\pi_0} \in \Irr(\G_{n_0}(F))$ and $(\rho_1',\cdots,\rho_s';\tau_0)$ with $\tau_0 \in \Irr(\J_{m_0}(F))$, respectively. Put $l=2n-2m-1$ and $l_0=2n_0-2m_0-1$. Then the following holds:
\begin{enumerate}
\item $\tau_0=\theta_{\psi,V_0,W_0}(\wt{\pi_0})$.
\item If $ m-m_0 \le n-n_0$, then
\[\{\rho_1,\cdots,\rho_r\}=\{ |\cdot|^{\frac{l-1}{2}},|\cdot|^{\frac{l-3}{2}},\cdots, |\cdot|^{\frac{l_0+1}{2}},\rho_1' \chi_{V_n}^{-1},\cdots,\rho_s' \chi_{V_n}^{-1}  \}.\]
\item If $ m-m_0 \ge n-n_0$, then
\[ \{\rho_1',\cdots,\rho_s'\}=\{\chi_{V_n}|\cdot|^{\frac{-(l+1)}{2}},\chi_{V_n}|\cdot|^{\frac{-(l+3)}{2}},\cdots,  \chi_{V_n}|\cdot|^{\frac{-(l_0-1)}{2}},\rho_1\chi_{V_n},\cdots,\rho_r\chi_{V_n}  \}.\]
\end{enumerate}
\end{prop}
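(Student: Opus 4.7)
The plan is to derive the result from Kudla's original analysis of the Jacquet modules of the Weil representation. The argument proceeds by induction on $\max(n-n_0,\,m-m_0)$, with the base case a one-step reduction, and the inductive step realized by comparing Kudla's filtration on the Jacquet module of $\omega_{\psi,V_n,W_m}$ with Frobenius reciprocity applied to $\wt{\pi}$ and $\tau$.

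First I would reduce to a single-character reduction on the $\GL$ side. If $\wt{\pi}$ sits as a subquotient of $\Ind_{Q_{n_1}}^{\G_n}(v_1 \otimes \wt{\pi}')$ for $\wt{\pi}'$ an irreducible representation of the Levi quotient with supercuspidal support $(v_2,\ldots,v_r;\wt{\pi_0})$, then by the second adjointness $\wt{\pi}'$ (up to Weyl conjugation) appears as a quotient of the Jacquet module $R_{Q_{n_1}}(\wt{\pi})$ along the parabolic $Q_{n_1}$ stabilizing an $n_1$-dimensional isotropic subspace. Dually, if $\tau$ has supercuspidal support $(\nu_1,\ldots,\nu_s;\tau_0)$, we realize $\tau_0$ through a similar Jacquet module on the symplectic side. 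The theta correspondence forces $\tau$ to be a quotient of the twisted Jacquet module of $\omega_{\psi,V_n,W_m}$ against $\wt{\pi}$, so it suffices to analyze how the $\GL_{n_1}$-component of $R_{Q_{n_1}}(\omega_{\psi,V_n,W_m})$ can produce $v_1$.

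Second, I would invoke Kudla's filtration: the Jacquet module $R_{Q_{n_1}}(\omega_{\psi,V_n,W_m})$ admits a $\GL_{n_1} \times \G_{n-n_1} \times \J_m$-equivariant filtration whose $a$-th successive quotient $(0 \le a \le \min(n_1,m))$ is of the form
\[
J^{(a)} \;\cong\; \Ind\bigl(\,\Sigma^{(a)} \otimes \omega_{\psi,V_{n-n_1},W_{m-a}}\,\bigr),
\]
where $\Sigma^{(a)}$ is built from $\chi_{V_n}$-twisted characters of a Levi of $\GL_{n_1}$ shifted by explicit powers of $|\cdot|^{1/2}$ determined by the defect of the dual pair. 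Matching $v_1$ against the $\GL_{n_1}$-factor of each layer yields a dichotomy: either the layer with $a=0$ contributes, in which case $v_1 = \nu_j \chi_{V_n}^{-1}$ for some $j$ (this populates the second family in part (ii)); or a layer with $a>0$ contributes, in which case $v_1$ equals one of the exponent characters $\chi_{V_n}^{\pm 1}|\cdot|^{\bullet}$ in the stated ranges. Symmetrically, the same filtration read from the $\J_m$ side gives part (iii). Iterating strips off the $v_i$ and $\nu_j$ one by one, and what survives couples $\wt{\pi_0}$ with $\tau_0$ through $\omega_{\psi,V_0,W_0}$, forcing $\tau_0 = \theta_{\psi,V_0,W_0}(\wt{\pi_0})$ and establishing (i).

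The main obstacle will be the careful bookkeeping of exponents in Kudla's filtration: one must verify that the shifts $\chi_{V_n}|\cdot|^{s}$ appearing in the successive quotients interlock exactly with the claimed ranges $\tfrac{l-1}{2},\tfrac{l-3}{2},\ldots,\tfrac{l'+1}{2}$ (and likewise on the symplectic side), and that the conjugacy ambiguity in the supercuspidal support absorbs the choice of which layer produces which $v_i$. Since the statement is cited from \cite{Ku}, most of this tracking is already done there; the task is essentially to match conventions (parametrizations of parabolics, normalization of $\chi_V$, and the defect $l_0 = n_0 - m_0 - 1$) to Kudla's setup and specialize.
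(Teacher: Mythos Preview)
The paper does not supply its own proof of this proposition: it is quoted verbatim as a summary of Kudla's supercuspidal support theorem, with only the citation \cite{Ku}. Your outline---computing the Jacquet module of the Weil representation via Kudla's filtration, matching the $\GL$-factor of each layer against the supercuspidal support of $\wt{\pi}$ (resp.\ $\tau$), and iterating to peel off the $v_i$, $\nu_j$ until the supercuspidal pieces $\wt{\pi_0}$, $\tau_0$ are coupled through $\omega_{\psi,V_0,W_0}$---is exactly Kudla's original argument, so your approach coincides with that of the cited reference.

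One small caution on the bookkeeping you flag at the end: in Kudla's filtration it is the \emph{top} layer ($a=n_1$), not $a=0$, that links $v_1$ to some $\nu_j\chi_{V_n}^{-1}$ via the smaller Weil representation, while the lower layers contribute the extra $|\cdot|^{\bullet}$ characters; and it is the supercuspidality of $v_1$ that kills all but one layer. This does not affect the overall strategy, only the labeling in your dichotomy.
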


For an irreducible smooth representation $\rho$ of $\GL_r(F)$ and a $2n$-dimensional symmetric space $V$ over $F$, set 
\[\gamma(\sigma,V)\coloneqq \begin{cases}\frac{\gamma(s,\sigma,\psi)}{\gamma(s,\sigma\chi_{V},\psi)},&\quad \text{ if } \dim(V_{\an})=0 \\ 1, &\quad \text{ if } \dim(V_{\an})=2. \end{cases}\]

The following lemma is an application of Proposition~\ref{ksc}, which illustrates the precise relationship between the $\GL_1$-twisted $\gamma$-factors of $\wt{\pi}$ and $\theta_{\psi,V_n,W_m}(\wt{\pi})$ when $\wt{\pi}$ is an unramified representation. When $\cha(F)=0$, it is a part of \cite[Lemma 11.8]{GI1}. However, the formula therein is only valid when $\G_n$ is non-split but quasi-split. For the case when $\G_n$ is split, some adjustments are required.

\begin{lem}\label{Theta0} Let $\wt{\pi} \in \Irr(\G_n(F))$. Suppose that $\wt{\pi}$ is unramified and $\Theta_{\psi,V_n,W_m}(\wt{\pi})$ is nonzero. Put $\tau=\theta_{\psi,V_n,W_m}(\wt{\pi})$. Write $l=2n-2m-1$. Then for any character $\sigma$ of $\GL_1(F)$, the following hold:
\begin{enumerate}
\item if $l \ge 1$, then
\[\gamma(s,\tau \times \sigma ,\psi)=\gamma(\sigma,V_n) \cdot \gamma(s,\wt{\pi}\times  \sigma \chi_{V_n},\psi) \cdot \prod_{i=1}^{l}  \gamma(s+\frac{l+1}{2}-i,\sigma \chi_{V_n},\psi)^{-1}.\]

\item if $l \le -1$, then
\[\gamma(s,\tau \times \sigma,\psi)=\gamma(\sigma,V_n) \cdot \gamma(s,\wt{\pi} \times \sigma \chi_{V_n},\psi)\cdot \prod_{i=1}^{-l}\gamma(s+\frac{-l+1}{2}-i,\sigma \chi_{V_n},\psi).\]
\end{enumerate}
%Here, $\gamma^{Tate}$ is the Tate's $\gamma$-function by the local Langlands correspondence.
\end{lem}

\begin{proof}
We keep the notation in Proposition~\ref{ksc}, i.e., $\wt{\pi}$ and $\tau$ have supercuspidal supports $(\rho_1,\cdots,\rho_r;\wt{\pi_0})$ with $\wt{\pi_0} \in \Irr(\G_{n_0}(F))$ and $(\rho_1',\cdots,\rho_s';\tau_0)$ with $\tau_0 \in \Irr(\J_{m_0}(F))$, respectively. For convenience in notation, set $\chi=\chi_{V_n}$. Note that $\chi=\chi^{-1}$. Since $V_{n_0}$ (resp. $W_{m_0}$) is the anisotropic kernel of $V_n$ (resp. $W_m$), $n_0=0 \text{ or }1$ (resp. $m_0=0$). Furthermore, $\rho_i,\rho_j'$ are 1-dimensional character of $\GL_1(F)$ and $\wt{\pi_0}=\mathbb{I}_{V_{n_0}}$ (resp. $\tau_0=\mathbb{I}_{W_{m_0}}$) the trivial representation of $\G_{n_0}(F)$ (resp. $\J_{m_0}(F)$).  

We first prove (i). By Proposition~{\ref{ksc}} (ii),
\begin{flalign*}
\gamma(s,\wt{\pi} \times \sigma\chi,\psi)
&
=\gamma(s,\mathbb{I}_{V_{n_0}} \times \sigma\chi,\psi) \cdot \prod_{j=1}^{s} \big(\gamma(s,\rho_i' \chi^{-1} \times \sigma\chi,\psi) \cdot \gamma(s,(\rho_i' \chi^{-1})^{-1} \times \sigma\chi,\psi)\big)\\
&
\times \prod_{i=1}^{\frac{l-l_0}{2}} \gamma(s+\frac{l+1}{2}-i,\sigma\chi,\psi) \cdot \gamma(s+i-\frac{l+1}{2},\sigma\chi,\psi) \\
&
=\frac{\gamma(s,\mathbb{I}_{V_{n_0}} \times \sigma\chi,\psi)}{\gamma(s,\mathbb{I}_{W_{m_0}}\times \sigma,\psi)}
\cdot 
\gamma(s,\mathbb{I}_{W_{m_0}}\times \sigma ,\psi)
\prod_{j=1}^{s}\gamma(s,\rho_i' \times \sigma,\psi)\cdot \gamma(s,(\rho_i' )^{-1} \times \sigma,\psi)\\
&
\times \frac{\prod_{i=1}^{l}  \gamma(s+\frac{l+1}{2}-i,\sigma\chi,\psi)}{\prod_{i=1}^{l_0}  \gamma(s+\frac{l_0+1}{2}-i,\sigma\chi,\psi) }\\ 
&
= \gamma(s,\tau \times \sigma,\psi) \cdot \frac{\gamma(s,\mathbb{I}_{V_{n_0}} \times \sigma\chi,\psi)}{\gamma(s,\mathbb{I}_{W_{m_0}}\times \sigma,\psi)} \cdot \frac{\prod_{i=1}^{l}  \gamma(s+\frac{l+1}{2}-i,\sigma\chi,\psi) }{\prod_{i=1}^{l_0}  \gamma(s+\frac{l_0+1}{2}-i,\sigma\chi,\psi)}.\end{flalign*}
Here, $\prod_{i=1}^{l_0}  \gamma(s+\frac{l_0+1}{2}-i,\sigma\chi,\psi)$ is defined as  $\gamma(s,\sigma\chi,\psi)^{-1}$ if $l_0=-1$.

\noindent If $n_0=0$, then $l_0=-1$. By Remark~{\ref{min}}, we have
\[
\gamma(s,\wt{\pi} \times \sigma\chi,\psi) = \gamma(s,\tau \times \sigma,\psi) \cdot \frac{\gamma(s,\sigma\chi,\psi)}{\gamma(s,\sigma,\psi)} \cdot \prod_{i=1}^{l}  \gamma(s+\frac{l+1}{2}-i,\sigma\chi,\psi).
\]
If $n_0=1$, then $l_0=1$. By Property~{\ref{Property_gamma}} (viii) and Remark~{\ref{min}}, we have
\begin{flalign*}
&
\gamma(s,\wt{\pi} \times \sigma\chi,\psi) = \gamma(s,\tau \times \sigma,\psi) \cdot \frac{\gamma(s,\sigma\chi,\psi)\cdot \gamma(s,\sigma,\psi)}{\gamma(s,\sigma,\psi)\cdot \gamma(s, \sigma\chi,\psi)} \cdot \prod_{i=1}^{l}  \gamma(s+\frac{l+1}{2}-i,\sigma\chi,\psi) \\ 
&
=\gamma(s,\tau \times \sigma,\psi) \cdot \prod_{i=1}^{l} \gamma(s+\frac{l+1}{2}-i,\sigma\chi,\psi).\end{flalign*}
%If we replace $\sigma$ with $\sigma  \chi$ in the two formulas above, we obtain (i).

Next, we prove (ii). By Proposition~{\ref{ksc}} (iii), 
\begin{flalign*}
\gamma(s,\tau \times \sigma,\psi)
&
=\gamma(s,\mathbb{I}_{W_{m_0}} \times \sigma,\psi)\cdot \prod_{j=1}^{r} \gamma(s,\rho_j \chi \times \sigma,\psi)\cdot  \gamma(s,(\rho_j \chi)^{-1} \times \sigma,\psi)\\
&
\times \prod_{i=1}^{\frac{l_0-l}{2}} \gamma(s-\frac{(l-1)}{2}-i,\sigma\chi,\psi) \cdot \gamma(s+i+\frac{l-1}{2},\sigma\chi^{-1},\psi)\\
&
=\frac{\gamma(s,\mathbb{I}_{W_{m_0}} \times \sigma,\psi)}{\gamma(s,\mathbb{I}_{V_{n_0}}\times \sigma\chi ,\psi)}
\cdot
\gamma(s,\mathbb{I}_{V_{n_0}}\times \sigma \chi,\psi) \cdot
\prod_{j=1}^{r} \gamma(s,\rho_j \times \sigma\chi ,\psi)\cdot  \gamma(s,(\rho_j )^{-1} \times \sigma\chi ,\psi)\\
&
\times \frac{ \prod_{i=1}^{-l} \gamma(s-\frac{(l-1)}{2}-i,\sigma\chi,\psi)}{\prod_{i=1}^{-l_0} \gamma(s-\frac{(l_0-1)}{2}-i,\sigma\chi,\psi)}\\ 
&
=\gamma(s,\wt{\pi} \times \sigma\chi,\psi) \cdot \frac{\gamma(s,\mathbb{I}_{W_{m_0}} \times \sigma,\psi)}{\gamma(s,\mathbb{I}_{V_{n_0}}\times \sigma\chi,\psi)} \cdot \frac{\prod_{i=1}^{-l} \gamma(s-\frac{(l-1)}{2}-i,\sigma\chi,\psi) }{\prod_{i=1}^{-l_0} \gamma(s-\frac{(l_0-1)}{2}-i,\sigma\chi,\psi)}.\end{flalign*}
Here, $\prod_{i=1}^{-l_0} \gamma(s-\frac{(l_0-1)}{2}-i,\sigma\chi,\psi)$ is defined as  $\gamma(s,\sigma\chi,\psi)^{-1}$ if $l_0=1$.

\noindent If $n_0=0$, then $l_0=-1$. By Remark~{\ref{min}},
\[
\gamma(s,\tau \times \sigma,\psi)=\gamma(s,\wt{\pi} \times \sigma\chi,\psi)\cdot\frac{\gamma(s,\sigma,\psi)}{\gamma(s, \sigma\chi,\psi)} \cdot \prod_{i=1}^{-l} \gamma(s-\frac{(l-1)}{2}-i,\sigma\chi,\psi).\]
If $n_0=1$, then $l_0=1$.  By Property~{\ref{Property_gamma}} (viii) and Remark~{\ref{min}},  
\begin{flalign*}
&
\gamma(s,\tau \times \sigma,\psi)
=\gamma(s,\wt{\pi} \times \sigma\chi,\psi)\cdot \frac{\gamma(s,\sigma,\psi)}{\gamma(s,\sigma,\psi) \cdot \gamma(s,\sigma\chi,\psi) \cdot \gamma(s,\sigma\chi,\psi)^{-1}} \cdot \prod_{i=1}^{-l} \gamma(s-\frac{(l-1)}{2}-i,\sigma\chi,\psi)\\ 
&
=\gamma(s,\wt{\pi} \times \sigma\chi,\psi) \cdot  \prod_{i=1}^{-l} \gamma(s-\frac{(l-1)}{2}-i,\sigma\chi,\psi).
\end{flalign*} 
This completes the proof.
\end{proof}

The following theorem is the generalization of Lemma~{\ref{Theta0}}.

\begin{thm}\label{Theta2} Let $\wt{\pi} \in \Irr(\G_n(F))$. Suppose that $\wt{\pi}$ is $\mu_{c'}^{\pm}$-generic and $\Theta_{\psi,V_n,W_m}(\wt{\pi})$ is nonzero. Put $\tau=\theta_{\psi,V_n,W_m}(\wt{\pi})$. Write $l=2n-2m-1$. Then for any irreducible generic representation $\sigma$ of $\GL_r(F)$, the following hold:
\begin{enumerate}
\item if $l \ge 1$, then
\[\gamma(s,\tau \times \sigma ,\psi)=\gamma(\sigma,V_n) \cdot \gamma(s,\wt{\pi}\times  \sigma \chi_{V_n},\psi) \cdot \prod_{i=1}^{l}  \gamma(s+\frac{l+1}{2}-i,\sigma \chi_{V_n},\psi)^{-1}.\]

\item if $l \le -1$, then
\[\gamma(s,\tau \times \sigma,\psi)=\gamma(\sigma,V_n) \cdot \gamma(s,\wt{\pi} \times \sigma \chi_{V_n},\psi)\cdot \prod_{i=1}^{-l}\gamma(s+\frac{-l+1}{2}-i,\sigma \chi_{V_n},\psi).\]
\end{enumerate}
%Here, $\gamma^{Tate}$ is the Tate's $\gamma$-function by the local Langlands correspondence.
\end{thm}

\begin{rem}
Theorem \ref{Theta2} can be considered as a generalization of \cite[Theorem 11.5]{GI1}. It is important to note that \cite[Theorem ~11.5]{GI1} utilizes Lapid-Rallis's $\gamma$-factors for $\G_n \times \GL_1$ and $\J_m \times \GL_1$ defined in  \cite{LR05}. Lapid-Rallis's $\gamma$-factors are defined for all smooth representations, not necessarily limited to generic ones. In Proposition~\ref{Theta1}, we observed that $\tau=\theta_{\psi,V_n,W_m}(\widetilde{\pi})$ is generic if $n=m$. However, when $n\neq m$, $\tau$ need not necessarily be a generic representation. Hence, for $\gamma(s,\tau \times \sigma, \psi)$ that appears in Theorem~\ref{Theta2}, we adopt the definition of $\gamma$-factors for $\J_m \times \GL_k$ in \cite{CFK} for $\cha(F)=0$ case and in \textbf{Working hypothesis} on $\cha(F)=p$ case. See Remark~\ref{3gamma}.
\end{rem}

%\brown{We observe that $\tau=\theta_{\psi,V_n,W_m}(\widetilde{\pi})$ is generic if $n=m$. However, in general, $\tau$ is not need to be a generic representation. The authors in \cite{CFK} introduced the $\gamma$-factors for non-generic representations in the case of $\textrm{char}(F)=0$. We conjecture that this argument can be extended to arbitrary non-archimedean local fields of odd characteristic as well. Although this assumption is not yet proven, we will proceed under the working hypothesis that it holds true.}

To begin the proof of Theorem~\ref{Theta2}, we state the following lemma which will be used in our argument. It directly follows from the Kudla’s supercuspidal support theorem, i.e., Proposition~\ref{ksc}.
\begin{lem} \label{con}Let $\{\wt{W_r}\}$ be the Witt tower containing $W_m$, and let $\wt{\pi} \in \Irr(\G_n(F))$. Suppose that Proposition~\ref{Theta2} holds for some $W' \in \{\wt{W_r}\}$ such that $\Theta_{\psi,V_n,W'}(\wt{\pi}) \ne 0$. Then Theorem~\ref{Theta2} holds for all $W$ in $\{\wt{W_r}\}$ such that  $\Theta_{\psi,V_n,W}(\tau) \ne 0$.
\end{lem}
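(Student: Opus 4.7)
The strategy is to propagate the formula of Theorem~\ref{Theta2} through the Witt tower by combining Kudla's supercuspidal support theorem (Proposition~\ref{ksc}) with the multiplicative property of $\gamma$-factors.

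\textbf{Setup.} Fix $W'=W_{m'}\in\{\wt{W_r}\}$ for which Theorem~\ref{Theta2} is assumed to hold, and any other $W=W_m\in\{\wt{W_r}\}$ with $\theta_{\psi,V_n,W}(\wt{\pi})\neq 0$. Set $\tau=\theta_{\psi,V_n,W}(\wt{\pi})$ and $\tau'=\theta_{\psi,V_n,W'}(\wt{\pi})$. Write $l=n-m-1$, $l'=n-m'-1$. Let $\wt{\pi}$ have supercuspidal support $(v_1,\ldots,v_r;\wt{\pi_0})$ with $\wt{\pi_0}$ an irreducible supercuspidal representation of $\G_{n_0}(F)$.

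\textbf{Comparison of supercuspidal supports.} Applying Proposition~\ref{ksc} to both pairs $(V_n,W')$ and $(V_n,W)$, the supercuspidal supports of $\tau'$ and $\tau$ share the same supercuspidal base $\tau_0:=\theta_{\psi,V_0,W_0}(\wt{\pi_0})$, where $W_0$ is the first nonvanishing level in the Witt tower (a datum depending only on the tower, not on $W$ or $W'$). The remaining components of the supercuspidal support are explicit twists of the $v_i$ by $\chi_{V_n}^{\pm 1}$, together with a finite collection of unramified characters of the form $\chi_{V_n}^{\pm 1}|\cdot|^{?}$ whose exponents are determined linearly by the dimension gaps $n-n_0$ and $m-m_0$ (resp.\ $m'-m_0$).

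\textbf{Gamma-factor arithmetic.} By the multiplicative property (Section~\ref{gamma}, property (2)), decompose
\begin{equation*}
\gamma(s,\tau\times\sigma,\psi)=\gamma(s,\tau_0\times\sigma,\psi)\cdot A_W(s,\sigma),\qquad
\gamma(s,\tau'\times\sigma,\psi)=\gamma(s,\tau_0\times\sigma,\psi)\cdot A_{W'}(s,\sigma),
\end{equation*}
where $A_W$ and $A_{W'}$ are explicit products of $\gamma$-factors of characters against $\sigma$, read off from Proposition~\ref{ksc}. Similarly,
\begin{equation*}
\gamma(s,\wt{\pi}\times\sigma\chi_{V_n},\psi)=\gamma(s,\wt{\pi_0}\times\sigma\chi_{V_n},\psi)\cdot\prod_{i=1}^{r}\gamma(s,v_i\times\sigma\chi_{V_n},\psi)\gamma(s,v_i^{\vee}\times\sigma\chi_{V_n},\psi).
\end{equation*}
Inserting these decompositions into the formula of Theorem~\ref{Theta2} for $W'$ (assumed to hold), one obtains an identity of the form $\gamma(s,\tau_0\times\sigma,\psi)=\gamma(s,\wt{\pi_0}\times\sigma\chi_{V_n},\psi)\cdot B(s,\sigma)$, where $B(s,\sigma)$ is a product of character $\gamma$-factors whose exponents involve only $n_0$ and $m_0$. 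Substituting this identity back into the decomposition of $\gamma(s,\tau\times\sigma,\psi)$ yields the desired expression in terms of $\gamma(s,\wt{\pi}\times\sigma\chi_{V_n},\psi)$, up to verification that the remaining character $\gamma$-factors collapse, via $\gamma(s,\chi|\cdot|^{s_0},\psi)=\gamma(s+s_0,\chi,\psi)$, to exactly the correction factors predicted in Theorem~\ref{Theta2} for $W$.

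\textbf{Main obstacle.} The content of the argument is bookkeeping: the statement of Proposition~\ref{ksc} splits into two cases according as $m-m_0\le n-n_0$ or $m-m_0\ge n-n_0$, and Theorem~\ref{Theta2} itself splits according to the sign of $l$. One must consider all combinations ($W$ and $W'$ may fall on the same or opposite sides of $W_0$, and each may produce either sign of $l$), and in each combination verify that the telescoping collapse of character $\gamma$-factors reproduces the correct correction factor. The symmetry $v\mapsto v^{\vee}$ in both the $\chi_{V_n}$-twisted list and the multiplicative formula for $\gamma(s,\wt{\pi}\times\sigma\chi_{V_n},\psi)$ is what makes the dependence on the $v_i$ cancel, so that only the dimension-dependent tail survives.
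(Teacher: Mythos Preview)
Your proposal is correct and follows exactly the approach the paper intends: the paper does not actually write out a proof but simply declares the lemma ``an easy consequence of Proposition~\ref{ksc}'', i.e., Kudla's supercuspidal support theorem combined with the multiplicative property of $\gamma$-factors, which is precisely the mechanism you describe. Your outline in fact supplies more detail than the paper does, correctly isolating the one nontrivial point (the case split on the sign of $m-m_0-(n-n_0)$ and the telescoping of the character $\gamma$-factors).
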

Now we are ready to prove Theorem~\ref{Theta2}.

\begin{proof}[Proof of Theorem~\ref{Theta2}] 
The proof follows a similar line of reasoning in \cite[Theorem 11.5]{GI1}. Write $V=V_n$.

%However, when considering the general case of $r\neq1$, there is a significant difference. While \cite[Theorem 11.5]{GI1} is formulated using the Lapid-Rallis's $\gamma$-factors, which are defined for all irreducible admissible representations of $\G_n(F) \times \GL_1(F)$ and $\J_n(F) \times \GL_1(F)$, including non-generic ones, the $\gamma$-factors appearing in the theorem are the Rankin--Selberg or Langlands-Shahidi's $\gamma$-factors, which are defined only for generic representations. \red{Q. You mean the one in Theorem 4.2? $\tau$ may not be generic. You can say $\gamma$-factors on the right-hand side of the equations.} This distinction requires careful consideration when employing a global-to-local argument as presented below.} \red{I feel that this sentence is already mentioned well in Remark 4.3. Therefore we don't need to repeat in the proof what we mentioned. Maybe one sentence with reference Remark 4.3 is enough.}

By the multiplicative property of $\gamma$-factors, it is enough to consider the case when $\wt{\pi}$ and $\sigma$ are supercuspidal. This can be proved using a global-to-local argument. Let $\pi$ be an irreducible supercuspidal constituent of $\wt{\pi}\vert_{\SO(V)}$. (See \cite[Section 2]{BJ01} for supecuspidality of $\pi$.)
Suppose we have the following data:
\begin{itemize}
\item $L$, a totally imaginary number field (or a global function field) with $\A$ its adele ring such that $L_{v_0}=F$ for some finite place $v_0$ of $L$
\item $\Psi$, a nontrivial additive character of $L \bs \A$ such that $\Psi_{v_0}=\psi$
\item $\mf{U}$, a generic automorphic character of $U(L) \bs U(\A)$ associated to $\Psi$ such that $\mf{U}_{v_0}=\mu_{c'}$
\item $\mathbb{\WW}$, a symplectic space over $L$ of dimension $2m$, with associated isometry group $\Sp(\WW)$
\item $\mathbb{V}$, a symmetric space over $L$ of dimension $2n$ such that $\mathbb{V}_{v_0}=V$, with associated isometry group $\O(\VV)$ and Hecke character $\chi$ of $\AA^{\times}$ such that $\chi_{v_0}=\chi_{V}$
\item $\{\mathbb{W}_k\}$, the tower of symplectic spaces over $L$ containing $\WW$ 
\item $S$, a finite set consisting all archimedean places of $L$ and $v_0$ (if $L$ is a global function field, $S=\{v_0\}$) 
\item $\Pi$, a globally $\mf{U}$-generic cuspidal representation of $\SO(\VV)(\AA)$ such that $\Pi_{v_0}=\pi$ and for all places $v \notin S$ of $L$, $\Pi_v$ is unramified
%an irreducible quotient of an principal series representation induced from a minimal parabolic subgroup of $\SO(\VV_v)(L_v)$
\item $\Sigma$, a globally generic cuspidal representation of $\GL_r(\A)$ such that $\Sigma_{v_0}=\sigma$ and for all places $v \notin S$ of $L$, $\Sigma_{v}$ is unramified
%an irreducible quotient of an principal series representation induced from a minimal parabolic subgroup of $\GL_r(L_v)$
\item $k_0$, the first index of the global theta lift tower 
$\{\Theta_{\VV,\WW_k}(\Pi)\}$ with respect to $\Psi$ and $\chi$
such that $\Theta_{\VV,\WW_{k_0}}(\Pi)$ is nonzero
\item $\Xi$, an irreducible constituent of $\Theta_{\VV,\WW_{k_0}}(\Pi)$
\end{itemize}
The existence of such $L$ follows from a similar argument in \cite[Lemma~{5.2}]{MS20}. We can always construct such $\Pi$ and $\Sigma$. (For the $\cha(F)=0$ case, see \cite[Proposition 5.1]{Sha90} and for the $\cha(F)=p$ case, see \cite[Theorem~1.1]{GL}.) Furthermore, Theorem \ref{a1} tells that such $k_0$ exists and $\Xi$ is cuspidal. By Proposition~\ref{exgen}, there is an automorphic generic character $\wt{\mf{U}}$ of $\wt{\U}(\A)$ such that $\wt{\mf{U}}\vert_{\U(\A)}=\mf{U}$ and a globally $\wt{\mf{U}}$-generic cuspidal representation $\wt{\Pi}$ of $\O(\VV)(\AA)$ such that $\Res(\wt{\Pi})=\Pi$. By Proposition~\ref{Clifford}, we have $\wt{\Pi}_{v_0}=\wt{\pi} \text{ or } \wt{\pi}\otimes \det$.

Write $l_0=2n-2k_0-1$. For simplicity, we assume $l_0\ge1$. (The case $l_0 \le -1$ can be proved similarly.) For unramified places $v \notin S$, an unramified representation $\Sigma_{v}$ is considered the unique unramified quotient of a principal series $\Ind_{\mathbb{B}(L_v)}^{\GL_r(L_v)}
(\chi_1\otimes\cdots\otimes \chi_r)$, where $\chi_i$ are unramified character of $F^{\times}$ and $\mathbb{B}$ is a Borel subgroup of $\GL_r$. Therefore, multiplicativity property of $\gamma$-factors together with Lemma~{\ref{Theta0}} imply the following:
\begin{equation}\label{gamma:iden}
\gamma(s,\Xi_{v} \times \Sigma_{v},\Psi_v)=\gamma(\Sigma_v,\mathbb{V}_{v})\cdot\gamma(s,\wt{\Pi}_v \times \Sigma_{v} \chi_{v},\Psi_v)\cdot \prod_{i=1}^{l_0}\gamma(s+\frac{l_0+1}{2}-i,\Sigma_{v} \chi_{v},\Psi_v)^{-1}.
\end{equation}
Note that for all places $v \notin S$, $\Xi_v, \Pi_v$ and $\Sigma_v$ are unramified. Therefore, Property~\ref{Property_gamma} (iv) of $\gamma$-factors implies that by taking products of (\ref{gamma:iden}) over all places $v \notin S$ we have
\[\frac{L^S(1-s,\Xi^{\vee} \times \Sigma^{\vee})}{L^S(s,\Xi \times \Sigma)}=\gamma^S(\Sigma_v,\mathbb{V}_{v}) \cdot \frac{L^S(1-s,\wt{\Pi}^{\vee} \times \Sigma^{\vee} \chi^{\vee})}{L^S(s,\wt{\Pi} \times \Sigma \chi)} \times \prod_{i=1}^{l_0} \frac{L^S(1-s-\frac{l_0+1}{2}+i,\Sigma^{\vee} \chi^{-1})^{-1}}{L^S(s+\frac{l_0+1}{2}-i,\Sigma \chi)^{-1}}.\]
For any archimedean place $v$ of $L$, $L_v = \CC$. The theta correspondence for complex groups is well-understood and can be described in terms of the local Langlands correspondence (see \cite{AB}). Applying the archimedean property of $\gamma$-factors, we have the identity (\ref{gamma:iden}) at the archimedean places $v$ of $L$ as well.

By the global functional equation of $\gamma$-factors,
\[
\frac{L^S(1-s,\Xi^{\vee} \times \Sigma^{\vee})}{L^S(s,\Xi \times \Sigma)} \times \prod_{v \in S}\gamma(s,\Xi_{v} \times \Sigma_{v},\Psi_{v})
=1
\]
\[
=\gamma^S(\Sigma_v,\mathbb{V}_{v}) \cdot \frac{L^S(1-s,\wt{\Pi}^{\vee} \times \Sigma^{\vee} \chi^{\vee})}{L^S(s,\wt{\Pi} \times \Sigma \chi)} \times \prod_{i=1}^{l_0} \frac{L^S(1-s-\frac{l_0+1}{2}+i,\Sigma^{\vee} \chi^{-1})^{-1}}{L^S(s+\frac{l_0+1}{2}-i,\Sigma \chi)^{-1}} 
\]
\[ \times \prod_{v \in S} \left( \gamma(\Sigma_v,\mathbb{V}_{v}) \cdot \gamma(s,\wt{\Pi}_{v}\times \Sigma_{v} \chi_{v},\Psi_{v}) 
\times  \prod_{i=1}^{l_0}\gamma(s+\frac{l_0+1}{2}-i,\Sigma_{v} \chi_{v},\Psi_{v})^{-1} \right)
.\]
By canceling the identities at places outside $v_0$, we have
\[
\gamma(s,\Xi_{v_0} \times \Sigma_{v_0},\Psi_{v_0}) 
=\gamma(\Sigma_{v_0},\mathbb{V}_{v_0}) \cdot \gamma(s,\wt{\Pi}_{v_0}\times \Sigma_{v_0} \chi_{v_0},\Psi_{v_0})\cdot   \prod_{i=1}^{l_0}\gamma(s+\frac{l_0+1}{2}-i,\Sigma_{v_0} \chi_{v_0},\Psi_{v_0})^{-1}.
\]
Since $\wt{\Pi}_{v_0}=\wt{\pi} \text{ or } \wt{\pi} \otimes \det$, $\Sigma_{v_0}=\sigma$, $\mathbb{V}_{v_0}=V$ and $\Xi_{v_0}=\theta_{\psi,\VV_{v_0},(\WW_{k})_{v_0}}(\wt{\pi})$, (\ref{det}) implies the desired identity for this specific theta lift $\Xi_{v_0}$. Moreover, Lemma~\ref{con} establishes the desired identity for any arbitrary non-zero theta lift.
%\[
%\gamma(s,\wt{\pi}\times \sigma\chi_{V_n},\Psi)=\gamma(s,(\wt{\pi}\otimes \det)\times \sigma\chi_{V_n},\Psi)=\gamma(s,\pi\times \sigma\chi_{V_n},\Psi).
%\]
\end{proof}

The following corollary plays a key role in proving our main theorem.
\begin{cor}\label{Theta3} Let $\wt{\pi} \in \Irr_{\rm{temp}}(\G_n(F))$ and suppose that $\wt{\pi}$ is $\mu_{c'}^{+}$-generic. Put $\tau=\theta_{\psi,V_n,W_n}(\wt{\pi}) \in \Irr_{\temp}(\J_n(F))$. Then for any irreducible supercuspidal representation $\sigma$ of $\GL_r(F)$,
   \[\gamma(s,\tau \times \sigma,\psi)=\gamma(\sigma,V_n)\cdot \gamma(s,\wt{\pi} \times \sigma \chi_{V_n},\psi)\cdot\gamma(s,\sigma \chi_{V_n},\psi).\]
\end{cor}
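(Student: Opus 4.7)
The plan is to obtain this corollary as an immediate specialization of Theorem \ref{Theta2} to the equal-rank case $W=W_n$ (i.e.\ $m=n$), which is the setting implicit in the surrounding discussion and is precisely the case in which nonvanishing of the big theta lift is guaranteed for tempered $\wt{\pi}$ by Proposition \ref{tem}. With $m=n$ one computes $l = n - m - 1 = -1$, so we are in branch (ii) of Theorem \ref{Theta2}.

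Concretely, I would substitute $l=-1$ into the formula of Theorem \ref{Theta2}(ii):
\[
\gamma(s,\tau\times\sigma,\psi) = \gamma(s,\wt{\pi}\times\sigma\chi_V,\psi)\cdot\prod_{i=1}^{1}\gamma\!\left(s+\tfrac{-l+1}{2}-i,\,\sigma\chi_V,\psi\right).
\]
The product contains a single factor corresponding to $i=1$, whose shift simplifies to $\tfrac{2}{2}-1 = 0$, so it equals $\gamma(s,\sigma\chi_V,\psi)$. To match the statement of the corollary verbatim, I would then invoke Definition \ref{Ogamma} (well-posed thanks to Lemma \ref{conj}), which identifies $\gamma(s,\wt{\pi}\times\sigma\chi_V,\psi)$ with $\gamma(s,\pi\times\sigma\chi_V,\psi)$ for any irreducible constituent $\pi$ of $\wt{\pi}|_{\H_n(F)}$.

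There is essentially no substantive obstacle; the corollary is a purely arithmetic specialization of the preceding theorem. The reason it deserves to be recorded separately is its role in Section \ref{The proof}: having the $\gamma$-factor comparison in exactly this single-factor form is what allows the twist by $\chi_V$ and the auxiliary $\gamma(s,\sigma\chi_V,\psi)$ to be transparently cancelled when one transfers Jo's local converse theorem for $\Sp_{2n}$ (Theorem \ref{Sp}) across the theta correspondence to deduce the LCT for $\O_{2n}$.
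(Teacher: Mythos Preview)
Your proposal is correct and matches the paper's own proof, which simply says ``This is a special case of Theorem~\ref{Theta2} when $n=m$ and $l=-1$.'' You have filled in the arithmetic of the shift and, via Definition~\ref{Ogamma}, clarified the passage from $\gamma(s,\wt{\pi}\times\sigma\chi_V,\psi)$ to $\gamma(s,\pi\times\sigma\chi_V,\psi)$, which the paper leaves implicit as a notational convention.
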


\begin{proof} Note that $\tau$ is nonzero and tempered by Proposition~\ref{tem0} and Proposition~\ref{Theta1}. Then this is a special case of Theorem~\ref{Theta2} when $n=m$, $l=-1$, and $\sigma$ is a supercuspidal representation.
\end{proof}

\section{The proof}\label{The proof}
In this section, we prove our main theorem. The proof consists of two steps; the first step is to prove the following tempered case.

\begin{thm}\label{tem}Let $\wt{\pi_1}$ and $\wt{\pi_2}$ be generic tempered irreducible representations of $\G_n(F)$ with the same central characters and the same $\mu_{c'}^{\pm}$-genericity. Suppose that
\[\gamma(s,\wt{\pi_1} \times \rho,\psi)=\gamma(s,\wt{\pi_2} \times \rho,\psi)
\] holds for any irreducible supercuspidal representation $\rho$ of $\GL_t(F)$ with $1\le t \le n$. Then 
\[ \wt{\pi_1} \simeq \wt{\pi_2}.
\]

\end{thm}

\begin{proof}
Let $W_n$ be a $2n$-dimesional symplectic space. Put 
\[(\wt{\pi_1}',\wt{\pi_2}')=\begin{cases} (\wt{\pi_1},\wt{\pi_2}) \quad \quad \quad \quad \quad \quad \ \text{if $\wt{\pi_1},\wt{\pi_2}$ are $\mu_{c'}^{+}$-generic }\\
 (\wt{\pi_1}\otimes \det,\wt{\pi_2}\otimes \det) \quad \text{if $\wt{\pi_1},\wt{\pi_2}$ are $\mu_{c'}^{-}$-generic but not $\mu_{c'}^{+}$-generic}.\end{cases}\]

Then by Proposition~\ref{Theta1}, $\Theta_{\psi,V_n,W_n}(\wt{\pi_1}')$ and $\Theta_{\psi,V_n,W_n}(\wt{\pi_2}')$ are nonzero and $(\mu_{-c'}')^{-1}$-generic. Put $\tau_1:=\theta_{\psi,V_n,W_n}(\wt{\pi_1}')$ and $\tau_2:=\theta_{\psi,V_n,W_n}(\wt{\pi_2}')$. Since $\wt{\pi_1}$ and $\wt{\pi_2}$ have the same central characters, $\tau_1,\tau_2$ also have the same central characters by Proposition~\ref{cen}. Furthermore, by Corollary~\ref{Theta3}, 
\begin{align*}
    \gamma(s,\tau_1 \times \rho,\psi)=\gamma(s,\wt{\pi_1}' \times \rho \chi_{V_n},\psi)\cdot\gamma(s,\rho \chi_{V_n},\psi), \\
\gamma(s,\tau_2 \times \rho,\psi)=\gamma(s,\wt{\pi_2}' \times \rho \chi_{V_n},\psi)\cdot \gamma(s,\rho \chi_{V_n},\psi)
\end{align*} 
for any irreducible supercuspidal representation $\rho$ of $\GL_i(F)$ where $1 \le i \le n$.

Therefore, we obtain 
\[\gamma(s,\tau_1 \times \rho,\psi)=\gamma(s,\tau_2 \times \rho,\psi)\]
and due to property of the dependence on $\psi$ of $\gamma$-factors, i.e., Property \ref{Property_gamma}(iii), we have
\[\gamma(s,\tau_1 \times \rho,\psi^{-1})=\gamma(s,\tau_2 \times \rho,\psi^{-1}).\]
Then by Theorem~\ref{esp}, we have $\tau_1 \simeq \tau_2$ and by Howe duality, $\wt{\pi_1}'\simeq \wt{\pi_2}'$. Henceforth, we have $\wt{\pi_1} \simeq \wt{\pi_2}$.
\end{proof}

The following proposition is analogous to \cite[Proposition 3.2]{JS03} and \cite[Corollary 5.9, \ Proposition 5.11]{Jo}, representing their extension from the supercuspidal case to the tempered case.

\begin{prop}\label{temp}
Let $\G$ be either $\G_n$ or $\J_n$. For an irreducible generic tempered representation $\tau$ of $\G(F)$ and  irreducible unitary supercuspidal representations $\rho$ and $\rho_i$ of $\GL_k(F)$ and $\GL_{k_i}(F)$, respectively, we have the following:
\begin{enumerate}
\item The product $\prod_{i=1}^r \gamma(s+s_i,\rho_i \times \rho,\psi)$ has a real pole $($respectively, a real zero$)$ at $s=s_0$ if and only if $\rho \cong \rho_i^{\vee}$
and $s_0=1-s_i$ (respectively, $s_0=-s_i)$ for some $1 \leq i \leq r$.
\item The product $\prod_{i=1}^r \gamma(s-s_i,\rho_i^{\vee} \times \rho,\psi)$ has a real pole $($respectively, a real zero$)$ at $s=s_0$ if and only if $\rho \cong \rho_i$
and $s_0=1+s_i$ (respectively, $s_0=s_i)$ for some $1 \leq i \leq r$.
\item $\gamma(s,\tau \times \rho,\psi)$ has no zero for $\Re(s) >0$.
\item If $\gamma(s,\tau \times \rho,\psi)$ has a real pole at $s=s_0$, then the pole must be a simple pole at $s_0=1$ and $\rho\simeq \rho^{\vee}$. 
\end{enumerate}
\end{prop}

\begin{proof}
$(\rm i)$ and $(\rm{ii})$ follows a similar line reasoning in \cite[Section 3.2]{JS03}. Note that both $\tau$ and $\rho$ are tempered and generic. From the definition of local $L$-function (\ref{L-ftn}), the zero of $\gamma(s,\tau \times \rho,\psi)$ comes from the poles of $L(s, \tau \times \rho)$. However, Property \ref{Property_gamma}~(vii) implies that $L(s,\tau \times \rho)$ has no pole for $\Re(s)>0$. Therefore $(\rm{iii})$ of the theorem follows. 

When $\G=\J_n$, (iv) follows from \cite[Corollary 5.9]{Jo} and in the proof of \cite[Corollary 5.9]{Jo}, the restriction  that $\tau$ is supercuspidal is given to apply `Casselman--Shahidi Lemma' (\cite[Lemma~{5.8}]{Jo}). However, `Casselman--Shahidi Lemma' also holds for tempered representations by \cite[Proposition 12.3]{Kim}. Except that, other arguments of \cite[Corollary 5.9]{Jo} equally work for tempered representations. This completes the proof of  $(\rm{iv})$ when $\G=\J_n$.

When $\G=\G_n$, suppose that $\tau$ is $\mu_{c'}^{\pm}$-generic and let $\tau' \in \Irr_{\temp}(\G_{n}(F))$ be an element of, which is either $\tau$ or $(\tau \otimes \det)$, such that $\tau'$ be $\mu_{c'}^{\pm}$-generic. Put $\pi=\theta_{\psi,V_{n},W_{n}}(\tau')$. Then by Proposition~{\ref{tem0}} and Proposition~\ref{Theta1}, $\pi$ is nonzero and $(\mu_{-c'}')^{-1}$-generic and tempered. By Corollary~\ref{Theta3}, 
  \beq \nonumber 
  \gamma(s,\tau' \times \rho,\psi)=\begin{cases}\gamma(s,\pi \times \rho\chi_{V_n}^{-1},\psi)\cdot \gamma(s,\rho\chi_{V_n}^{-1},\psi)^{-1}, \quad &\text{if } \G_n \text{ is split}  \\ \gamma(s,\pi \times \rho\chi_{V_n}^{-1},\psi)\cdot \gamma(s,\rho,\psi)^{-1}, \quad &\text{if } \G_n \text{ is non-split}  \end{cases}.\eeq
Proposition~\ref{temp} (i) says that $\gamma(s,\rho,\psi)^{-1}$ and $\gamma(s,\rho\chi_{V_n}^{-1},\psi)^{-1}$ and $\gamma(s,\rho,\psi)^{-1}$ have no pole at $s=s_0>0$. Therefore, Proposition~\ref{temp} (iv) in case $\G=\G_n$ readily follows from the above equality and Proposition~\ref{temp} (iv) in case $\G=\J_n$.
\end{proof}

The following proposition is a direct consequence of Proposition~\ref{temp}, and its proof is essentially the same as  that of \cite[Theorem 5.1]{JS03}.

\begin{prop}
\label{equal}
Let $\wt{\pi_1}$ (resp. $\wt{\pi_2}$) be an irreducible $\mu_{c'}^{\pm}$-generic representations of $\G_n(F)$, which has tempered support $(\rho_1,\cdots,\rho_r;\wt{\pi_{01}})$ (resp. $(\rho_1',\cdots,\rho_{r'}';\wt{\pi_{02}})$) with exponents $(s_1,\cdots,s_r)$ (resp. $(s_1',\cdots,s_{r'}')$).
 Suppose that
\begin{multline}
\label{GeneralDec}
\left[\prod_{i=1}^r \gamma(s+s_i,\rho_i \times \rho,\psi)\cdot \gamma(s-s_i,\rho_i^{\vee} \times \rho,\psi) \right] \cdot\gamma(s,\wt{\pi_{01}} \times \rho,\psi)\\
 =\left[\prod_{i=1}^{r'} \gamma(s+s_i',\rho_i' \times \rho,\psi)\gamma(s-s_i',(\rho_i')^{\vee} \times \rho,\psi) \right] \cdot\gamma(s,\wt{\pi_{02}} \times \rho,\psi)
\end{multline}
for any irreducible unitary supercuspidal representation $\rho$ of ${\GL}_t(F)$ with $1 \leq t \leq n$. Then, $r=r'$ and there exists a permutation $\mathfrak{s}$
of $\{ 1,2,\dotsm,r\}$ such that
\begin{enumerate}[label=$(\mathrm{\roman*})$]
\item $s_i=s'_{\mathfrak{s}(i)}$ and $\rho_i \simeq \rho'_{\mathfrak{s}(i)}$ for all $i=1,2,\dotsm,r$;
\item $\gamma(s,\wt{\pi_{01}} \times \rho,\psi)=\gamma(s,\wt{\pi_{02}} \times \rho,\psi)$ for any irreducible unitary supercuspidal representation $\rho$ of ${\GL}_t(F)$ with $1 \leq t \leq n$. 
\end{enumerate}
\end{prop}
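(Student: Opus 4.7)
The plan is to argue by strong induction on $r+r'$, following the strategy of \cite[Theorem 5.1]{JS03} (see also \cite[Proposition 4.14]{Jo}) with Proposition~\ref{Pole} as the principal analytic tool. At each step I would locate the rightmost real pole of each side of (\ref{GeneralDec}) for a judiciously chosen test representation $\sigma$; both the location and the order of that pole are pinned down by a single class of $\gamma$-factors on each side, and this forces one piece of inducing data on the left to be matched with one on the right. Canceling the matched factors reduces (\ref{GeneralDec}) to an instance with strictly smaller $r+r'$, and the recursion terminates at $r=r'=0$, which is precisely assertion (iii).

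After relabeling one may assume $s_1 \ge \cdots \ge s_r \ge 0$ and $s_1' \ge \cdots \ge s_{r'}' \ge 0$, and, by the symmetry of (\ref{GeneralDec}), that $s_1 \ge s_1'$. Suppose first that $s_1 > 0$ and test (\ref{GeneralDec}) at $\sigma = v_1$. By Proposition~\ref{Pole}(ii) the factor $\gamma(s-s_1, v_1^{\vee} \times v_1, \psi)$ has a pole at $s=1+s_1$; by Proposition~\ref{Pole}(i) every pole of the factors $\gamma(s+s_i, v_i \times v_1, \psi)$ sits at some $s=1-s_i \le 1$; and by Proposition~\ref{Pole}(iii) the only possible real pole of $\gamma(s, \wt{\pi_{01}} \times v_1, \psi)$ in $\mathrm{Re}(s)>0$ is at $s=1$. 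Since $1+s_1>1$, the rightmost real pole of the LHS is at $s=1+s_1$ with order $a := \#\{i : v_i \cong v_1,\ s_i = s_1\}$. Applying the same pole analysis to the RHS, this pole can arise only from factors $\gamma(s-s_j', (v_j')^{\vee} \times v_1, \psi)$ with $v_j' \cong v_1$ and $s_j' = s_1$; this forces $s_1' \ge s_1$, hence $s_1 = s_1'$, and matching orders gives $a = \#\{j : v_j' \cong v_1,\ s_j' = s_1\}$.

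One then equates these matched blocks on each side of (\ref{GeneralDec}) and cancels them. Because the cancellation removes identical finite products of $\gamma$-factors valid uniformly in $\sigma$, the remainder is again an identity of the form (\ref{GeneralDec}) with $r+r'$ decreased by $2a$. The inductive hypothesis then produces the permutation $\rho$ verifying (i), (ii) and the residual identity (iii). The main delicacy I expect is ensuring strict separation of pole loci so that contributions of different types do not interfere; this is exactly what the hypothesis $s_1>0$ secures through Proposition~\ref{Pole}, together with careful bookkeeping of pole orders to justify the cancellation. The marginal case $s_1 = 0$ forces every exponent to vanish, so both $\wt{\pi_1}$ and $\wt{\pi_2}$ are tempered (one may take $r = r' = 0$ after absorbing the zero-exponent factors into the tempered part), and the conclusion collapses at once to (iii).
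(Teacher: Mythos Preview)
Your proposal is correct and follows exactly the approach the paper indicates: the paper itself omits the proof, stating only that it is ``almost identical to that of \cite[Theorem 5.1]{JS03} (cf.\ \cite[Proposition 4.14]{Jo})'' with Proposition~\ref{Pole} as input, which is precisely the rightmost-real-pole induction on $r+r'$ that you have sketched. Your handling of the boundary case $s_1=0$ implicitly invokes the uniqueness of the Langlands data (so that vanishing exponents force $r=0$), which is the standard convention the paper is using when it asserts that tempered supports are uniquely determined.
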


\begin{proof} By putting $\rho=\rho_1$ in the equation (\ref{GeneralDec}), we obtain \begin{multline}
\label{GD1}
\left[\prod_{i=1}^r \gamma(s+s_i,\rho_i \times \rho_1,\psi)\cdot \gamma(s-s_i,\rho_i^{\vee} \times \rho_1,\psi) \right] \cdot\gamma(s,\wt{\pi_{01}} \times \rho_1,\psi)\\
 =\left[\prod_{i=1}^{r'} \gamma(s+s_i',\rho_i' \times \rho_1,\psi)\cdot \gamma(s-s_i',(\rho_i')^{\vee} \times \rho_1,\psi) \right] \cdot\gamma(s,\wt{\pi_{02}} \times \rho_1,\psi).
\end{multline}
By Proposition~\ref{temp}, $\gamma(s-s_1,\rho_1^{\vee}\times \rho_1
,\psi)$ has a pole at $s=s_1+1$ and the left-hand side (LHS) of the equation (\ref{GD1}) has no zero at $s=s_1+1$. Therefore, it has a pole at $s=s_1+1$. The poles on the right-hand side (RHS) of the equation (\ref{GD1}) can arise from one of the following terms:
\begin{enumerate}
    \item $\prod_{1\le i \le r'}\gamma(s+s_i',\rho_i' \times \rho_1,\psi)$,
    \item $\prod_{1\le i \le r'}\gamma(s-s_i',(\rho_i')^{\vee} \times \rho_1,\psi)$; or 
    \item $\gamma(s,\wt{\pi_{02}} \times \rho_1,\psi)$. 
\end{enumerate}

If the pole $s=s_1+1$ on the RHS originates from $\prod_{1\le i \le r'}\gamma(s-s_i',(\rho_i')^{\vee} \times \rho_1,\psi)$, then by Proposition~\ref{temp} (ii), we must have $s_1+1=s_i'+1$ and $\rho_1=\rho_i'$ for some $1\le i \le r'$. Consequently, we can cancel the term
\[
\gamma(s+s_1,\rho_1 \times \rho,\psi)\cdot \gamma(s-s_1,\rho_1^{\vee} \times \rho,\psi)
=
\gamma(s+s_i',\rho_i' \times \rho,\psi)\cdot \gamma(s-s_i',(\rho_i')^{\vee} \times \rho,\psi)
\]
on both sides of the equation (\ref{GeneralDec}). 

For later use in the proof, we call the above argument that cancels gamma factors as Argument A. 

By applying the Argument A iteratively for $s=s_i+1$, there exist some $1\le t \le r$ and a permutation $\mathfrak{p}$ of $\{1,2,\cdots,r'\}$ such that $s=s_j +1$ is a pole of $\prod_{i=1}^{r'-(t-1)}\gamma(s-s_{\mathfrak{p}(i)}',(\rho_{\mathfrak{p}(i)}')^{\vee} \times \rho_t,\psi)$ for $j=1, \cdots, t-1$ and $s=s_t+1$ is not a pole of $\prod_{i=1}^{r'-(t-1)}\gamma(s-s_{\mathfrak{p}(i)}',(\rho_{\mathfrak{p}(i)}')^{\vee} \times \rho_t,\psi)$. Thus, we obtain the following refined equality:
\begin{multline}
\label{GD2}
\left[\prod_{i=t}^r \gamma(s+s_i,\rho_i \times \rho,\psi)\cdot \gamma(s-s_i,\rho_i^{\vee} \times \rho,\psi) \right] \cdot\gamma(s,\wt{\pi_{01}} \times \rho,\psi)\\
 =\left[\prod_{i=1}^{r'-(t-1)} \gamma(s+s_{\mathfrak{p}(i)}',\rho_{\mathfrak{p}(i)}' \times \rho,\psi)\cdot \gamma(s-s_{\mathfrak{p}(i)}',(\rho_{\mathfrak{p}(i)}')^{\vee} \times \rho,\psi) \right] \cdot\gamma(s,\wt{\pi_{02}} \times \rho,\psi)
\end{multline}
Note that $s=s_t+1$ is a pole of either $\prod_{i=1}^{r'-(t-1)} \gamma(s+s_{\mathfrak{p}(i)}',\rho_{\mathfrak{p}(i)}' \times \rho_t,\psi)$  or $\gamma(s,\wt{\pi_{02}}\times \rho_t,\psi)$.

If $s=1+s_t$ is a pole of $\gamma(s,\wt{\pi_{02}}\times \rho_t,\psi)$, then Proposition~{\ref{temp}} (iv) implies that $s_t=0$ and $\rho_t=\rho_t^{\vee}$. %Then $s_t =s_{t+1} \cdots = s_r=0$ and thereby (\ref{GD2}) turns into 
%\begin{multline}
%\label{GD3}
%\left[\gamma(s,\rho_t \times \rho_t,\psi)^2\cdot \prod_{i=t+1}^r \gamma(s,\rho_i \times \rho_t,\psi)\cdot \gamma(s,\rho_i^{\vee} \times \rho_t,\psi) \right] \cdot\gamma(s,\wt{\pi_{01}} \times \rho_t,\psi)\\
 %=\left[\prod_{i=1}^{r'-(t-1)} \gamma(s+s_{\mathfrak{p}(i)}',\rho_{\mathfrak{p}(i)}' \times \rho_t,\psi)\cdot \gamma(s-s_{\mathfrak{p}(i)}',(\rho_{\mathfrak{p}(i)}')^{\vee} \times \rho_t,\psi) \right] \cdot\gamma(s,\wt{\pi_{02}} \times \rho_t,\psi).
%\end{multline}
Thus, $\gamma(s,\rho_t \times \rho_t,\psi)^2$ has a \textit{double pole} at $s=1$, while $\gamma(s,\wt{\pi_{02}}\times \rho_t,\psi)$ has a simple pole at $s=1$. Again, by putting $\rho=\rho_t$ in the equation (\ref{GD2}), we see that there exists a $1\le i \le r'-(t-1)$ such that $\gamma(s+s_{\mathfrak{p}(i)}',\rho_{\mathfrak{p}(i)}' \times \rho_t,\psi)$ has a pole at $s=1$, which implies that $s_{\mathfrak{p}(i)}'=0$ and $\rho_t=(\rho_{\mathfrak{p}(i)}')^{\vee}$. Therefore, if $s=1+s_t$ is a pole of $\gamma(s,\wt{\pi_{02}}\times \rho_t,\psi)$, then we can cancel out $\gamma(s+s_t,\rho_t \times \rho,\psi)\cdot \gamma(s-s_t,\rho_t^{\vee} \times \rho,\psi)=\gamma(s+s_{\mathfrak{p}(i)}',\rho_{\mathfrak{p}(i)}' \times \rho,\psi)\cdot \gamma(s-s_{\mathfrak{p}(i)}',(\rho_{\mathfrak{p}(i)}')^{\vee} \times \rho,\psi)$ on both sides of the equation (\ref{GD2}).

Similarly, for later use in the proof, in case $s$ is a pole of $\gamma(s,\wt{\pi_{02}}\times \rho_t,\psi)$, we call the above argument that cancels gamma factors as Argument B. 

By applying Arguments A and B iteratively, we eventually obtain the following equality:
\begin{multline}
\label{GD4}
\left[\prod_{i=t_1}^r \gamma(s+s_i,\rho_i \times \rho,\psi)\cdot \gamma(s-s_i,\rho_i^{\vee} \times \rho,\psi) \right] \cdot\gamma(s,\wt{\pi_{01}} \times \rho,\psi)\\
 =\left[\prod_{i=1}^{r'-(t_1-1)} \gamma(s+s_{\mathfrak{p}_1(i)}',\rho_{\mathfrak{p}_1(i)}' \times \rho,\psi)\cdot \gamma(s-s_{\mathfrak{p}_1(i)}',(\rho_{\mathfrak{p}_1(i)}')^{\vee} \times \rho,\psi) \right] \cdot\gamma(s,\wt{\pi_{02}} \times \rho,\psi)
\end{multline} for some $t \le t_1 \le r$ and a permutation $\mathfrak{p}_1$ of $\{1,2,\cdots,r'\}$ such that $s=1+s_{t_1}$ is not a pole of $\prod_{i=1}^{r'-(t_1-1)}  \gamma(s-s_{\mathfrak{p}_1(i)}',(\rho_{\mathfrak{p}_1(i)}')^{\vee} \times \rho_{t_1},\psi)$ and $\gamma(s,\wt{\pi_{02}}\times \rho_{t_1},\psi)$.

Since $s=1+s_{t_1}$ is a pole of 
$\gamma(s-s_{t_1},\rho_{t_1}^{\vee} \times \rho_{t_1},\psi)$, it follows that the RHS of the equation (\ref{GD4}) with $\rho=\rho_{t_1}$ has a pole at $s=1+s_{t_1}$. By our choice of $t_1$, it must be a pole of $\prod_{i=1}^{r'-(t_1-1)} \gamma(s+s_{\mathfrak{p}_1(i)}',\rho_{\mathfrak{p}_1(i)}' \times \rho_{t_1},\psi)$. By Proposition~\ref{temp}, we have $1+s_{t_1}=1-s_{\mathfrak{p}_1(i_1)}'$ and $\rho_{t_1}=(\rho_{\mathfrak{p}_1(i_1)}')^{\vee}$ for some $1\le i_1 \le r'-(t_1-1)$. Therefore, $s_{t_1}=s_{\mathfrak{p}_1(i_1)}'=0$. If $s_{\mathfrak{p}_1(k)}'>0$ for some $1\le k \le r'-(t_1-1)$, then the LHS of the equation $(\ref{GD4})$ with $\rho=\rho_{\mathfrak{p}_1(k)}'$ has a pole at $s=1+s_{\mathfrak{p}_1(k)}'$. However, this is impossible because $s_{t_1}=s_{t_1+1}=\cdots=s_r=0$. Therefore, $s_{\mathfrak{p}_1(i)}'=0$ for all $1\le i \le r'-(t_1-1)$ and we have:
\begin{multline}
\label{GD5}
\left[\prod_{i=t_1}^r \gamma(s,\rho_i \times \rho,\psi)\cdot \gamma(s,\rho_i^{\vee} \times \rho,\psi) \right] \cdot\gamma(s,\wt{\pi_{01}} \times \rho,\psi)\\
 =\left[\prod_{i=1}^{r'-(t_1-1)} \gamma(s,\rho_{\mathfrak{p}_1(i)}' \times \rho,\psi)\cdot \gamma(s,(\rho_{\mathfrak{p}_1(i)}')^{\vee} \times \rho,\psi) \right] \cdot\gamma(s,\wt{\pi_{02}} \times \rho,\psi).
\end{multline} 
Since $\rho_{t_1}=(\rho_{\mathfrak{p}_1(i_1)}')^{\vee}$, we can remove $\gamma(s,\rho_{t_1} \times \rho,\psi)\cdot \gamma(s,\rho_{t_1}^{\vee} \times \rho,\psi)=\gamma(s,\rho_{\mathfrak{p}_1(i)}' \times \rho,\psi)\cdot \gamma(s,(\rho_{\mathfrak{p}_1(i)}')^{\vee} \times \rho,\psi)$ on both sides of the equation (\ref{GD5}). In this way, applying Arguments A, B, and the above argument, we can cancel out $\left[\prod_{i=t_1}^r \gamma(s,\rho_i \times \rho,\psi)\cdot \gamma(s,\rho_i^{\vee} \times \rho,\psi) \right]$ in $\left[\prod_{i=1}^{r'-(t_1-1)} \gamma(s,\rho_{\mathfrak{p}_1(i)}' \times \rho,\psi)\cdot \gamma(s,(\rho_{\mathfrak{p}_1(i)}')^{\vee} \times \rho,\psi) \right]$ and we obtain
\begin{equation}
\label{GD6}
 \gamma(s,\wt{\pi_{01}} \times \rho,\psi)=\left[\prod_{i=1}^{r'-r} \gamma(s,\rho_{\mathfrak{p}_2(i)}' \times \rho,\psi)\cdot \gamma(s,(\rho_{\mathfrak{p}_2(i)}')^{\vee} \times \rho,\psi) \right]\cdot\gamma(s,\wt{\pi_{02}} \times \rho,\psi)
\end{equation} for some permutation $\mathfrak{p}_2$ of $\{1,2,\cdots,r'\}$. If $r'>r$, then we put $\rho=\rho_{\mathfrak{p}_2(1)}'$ in the equation (\ref{GD6}). It follows that $\gamma(s,(\rho_{\mathfrak{p}_2(1)}')^{\vee} \times \rho_{\mathfrak{p}_2(1)}',\psi)$ has a pole at $s=1$, and hence $ \gamma(s,\wt{\pi_{01}} \times \rho_{\mathfrak{p}_2(1)}',\psi)$ must also have a pole at $s=1$, which implies $\rho_{\mathfrak{p}_2(1)}'=(\rho_{\mathfrak{p}_2(1)}')^{\vee}$. However, while the RHS of the equation (\ref{GD6}) with $\rho=\rho_{\mathfrak{p}_2(1)}'$ has at least double pole at $s=1$, $ \gamma(s,\wt{\pi_{01}} \times \rho_{\mathfrak{p}_2(1)}',\psi)$ has at most a simple pole at $s=1$. This is a contradiction, and thus we conclude that $r'=r$, completing the proof.
\end{proof}

Now we are ready to prove the following main theorem.
\begin{thm}\label{m}Let $\wt{\pi_1}$ and $\wt{\pi_2}$ be generic irreducible representations of $\G_n(F)$ with the same central character and the same $\mu_{c'}^{\pm}$-genericity. Suppose that
\[\gamma(s,\wt{\pi_1} \times \rho,\psi)=\gamma(s,\wt{\pi_2} \times \rho,\psi)
\] holds for any irreducible supercuspidal representation $\rho$ of $\GL_t(F)$ with $1\le t \le n$. Then 
\[ \wt{\pi_1} \simeq \wt{\pi_2}.
\]
\end{thm}
\begin{proof}Suppose that tempered supports of $\wt{\pi_1}$ and $\wt{\pi_2}$ are  as in Proposition \ref{equal}. Then the central characters of $\wt{\pi_{01}}$ and $\wt{\pi_{02}}$ are same. By Theorem~\ref{tem} and Proposition~\ref{equal}, we have $\wt{\pi_{01}} \simeq \wt{\pi_{02}}$ 
and both $\wt{\pi_1}$ and $\wt{\pi_2}$, therefore, are $\mu_{c'}^{\pm}$-generic irreducible constituents of the induced representation 
\[
\Ind_{\Q(F)}^{\G_n(F)}
(\rho_1|\det|_F^{s_1}\otimes\cdots\otimes \rho_r|\det|_F^{s_r}\otimes \wt{\pi_{01}}).
\] 
Since such induced representation has a unique $\mu_{c'}^{\pm}$-generic constituent, it follows that $\wt{\pi_1} \simeq \wt{\pi_2}$.
\end{proof}

The following theorem is an easy consequence of Theorem~\ref{m}.
\begin{thm}\label{m2}Let $\pi_1$ and $\pi_2$ be $\mu_{c'}$-generic irreducible representations of $\H_n(F)$ with the same central character. Suppose that
\[\gamma(s,\pi_1 \times \rho,\psi)=\gamma(s,\pi_2 \times \rho,\psi)
\] holds for any irreducible supercuspidal representation $\rho$ of $\GL_t(F)$ with $1\le t \le n$. Then 
$[\pi_1]=[\pi_2]$.
\end{thm}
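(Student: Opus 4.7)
The plan is to lift $\pi_1$ and $\pi_2$ to irreducible representations $\wt{\pi_1}, \wt{\pi_2}$ of $\G_n(F)$, apply Theorem~\ref{m}, and then descend the resulting equivalence to $\H_n(F)$.

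First I would invoke Proposition~\ref{Clifford}(i) to split each $\pi_i$ according to whether $\pi_i^\e \simeq \pi_i$. If $\pi_i^\e \simeq \pi_i$, Lemma~\ref{gen}(i) guarantees that exactly one of the two extensions of $\pi_i$ to $\G_n(F)$ is $\mu_{c'}^+$-generic; I take this as $\wt{\pi_i}$. If $\pi_i^\e \not\simeq \pi_i$, I set $\wt{\pi_i} := \Ind_{\H_n(F)}^{\G_n(F)}(\pi_i)$, which by Lemma~\ref{gen}(ii) is irreducible and simultaneously $\mu_{c'}^+$- and $\mu_{c'}^-$-generic. In either case $\wt{\pi_i}$ is a $\mu_{c'}^+$-generic irreducible representation of $\G_n(F)$.

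Next I would verify the hypotheses of Theorem~\ref{m} for $(\wt{\pi_1},\wt{\pi_2})$. The centers of $\G_n$ and $\H_n$ coincide (both equal $\{\pm I\}$, using that $\dim V = 2n$ is even), and $-I$ acts by the same scalar on $\pi_i$ as on any of its extensions or inductions to $\G_n(F)$; so the central characters of $\wt{\pi_1}$ and $\wt{\pi_2}$ match. Moreover, the $\gamma$-factor equality transfers immediately via Definition~\ref{Ogamma}, using Lemma~\ref{conj} to ensure the $\gamma$-factor is well-defined even when $\wt{\pi_i}|_{\H_n(F)}$ is reducible. Applying Theorem~\ref{m} then yields $\wt{\pi_1} \simeq \wt{\pi_2}$.

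The final step is to read off the equivalence class. If both $\pi_i$ are $\e$-self-conjugate, restricting the isomorphism $\wt{\pi_1} \simeq \wt{\pi_2}$ to $\H_n(F)$ gives $\pi_1 = \wt{\pi_1}|_{\H_n(F)} \simeq \wt{\pi_2}|_{\H_n(F)} = \pi_2$. If neither is, Proposition~\ref{Clifford}(ii) gives $\pi_1 \oplus \pi_1^\e \simeq \wt{\pi_1}|_{\H_n(F)} \simeq \wt{\pi_2}|_{\H_n(F)} \simeq \pi_2 \oplus \pi_2^\e$, whence $\pi_2 \in \{\pi_1,\pi_1^\e\} = [\pi_1]$. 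The mixed case is automatically ruled out, since $\wt{\pi_1} \simeq \wt{\pi_2}$ cannot equate an irreducible $\H_n(F)$-representation with a sum of two non-isomorphic irreducibles. Thus $[\pi_1]=[\pi_2]$ in all cases. The only mild subtlety I anticipate is making the genericity labels of the two lifts compatible so that Theorem~\ref{m} genuinely applies; everything else is restriction/induction bookkeeping.
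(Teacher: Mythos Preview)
Your proof is correct and follows essentially the same route as the paper: lift each $\pi_i$ to a $\mu_{c'}^+$-generic irreducible constituent $\wt{\pi_i}$ of $\Ind_{\H_n(F)}^{\G_n(F)}(\pi_i)$, transfer the $\gamma$-factor identity via Definition~\ref{Ogamma}, apply Theorem~\ref{m}, and descend via Proposition~\ref{Clifford}. Your version is simply more explicit about the case split ($\pi_i^\e \simeq \pi_i$ or not), the central-character check, and the restriction bookkeeping, all of which the paper leaves implicit.
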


\begin{proof}By Lemma~\ref{gen}, we can take $\wt{\pi_1}$ and $\wt{\pi_2}$ an irreducible $\mu_{c'}^{+}$-generic constituent of $\Ind_{\H_n(F)}^{\G_n(F)}(\pi_1)$ and $\Ind_{\H_n(F)}^{\G_n(F)}(\pi_2)$, respectively. Then by the definition of the $\gamma$-factors for $\G_n \times \GL_t$, we have
\[\gamma(s,\wt{\pi_1} \times \rho,\psi)=\gamma(s,\pi_1 \times \rho,\psi)=\gamma(s,\pi_2 \times \rho,\psi)=\gamma(s,\wt{\pi_2} \times \rho,\psi).
\]
Due to Theorem~\ref{m}, $\wt{\pi_1} \simeq \wt{\pi_2}$ and hence by Proposition~\ref{Clifford}, we have
$[\pi_1]=[\pi_2]$.
\end{proof}

\section{Applications}\label{sec:applications}

By applying our theorem directly, we obtain the weak rigidity theorems for $\O(V)(\A)$ and $\SO(V)(\A)$, respectively. The discussion in this section closely parallels that of \cite[Theorem 5.3]{JS03}. In this section, let $L$ be a number field and $\A$ its ad\'{e}le ring. Let $V$ be a $2n$-dimensional symmetric space over $L$ such that $\O(V)$ is quasi-split. As before, we use the notation $\G_n\coloneqq\O(V)$ and $\H_n\coloneqq \SO(V)$.\par
Let $\pi=\otimes_v \pi_v$ and $\Pi=\otimes_v \Pi_v$ be irreducible automorphic representations of $\SO(V)(\A)$ and $\GL_{2n}(\A)$, respectively. Then we say that $\Pi$ is a weak functorial lift of $\pi$ if $\Pi_v$ is the local Langlands functorial lift of $\pi_v$ for all archimedean places and for almost all places $v$ of $L$, where $\pi_v$ and $\Pi_v$ are unramified.

We are now ready to prove the weak rigidity theorem for $\G_n$.

\begin{thm}[Weak rigidity theorem for $\O(V)$]\label{rigidity_O}
Let $\wt{\chi}_1,\wt{\chi_2}$ be generic characters of $\wt{\U}(L) \bs \wt{\U}(\A)$ such that $\wt{\chi}_1\vert_{\U(\A)}=\wt{\chi}_2\vert_{\U(\A)}$.
Let $\wt{\pi}=\otimes_v \wt{\pi}_v$  (resp. $\wt{\pi}'=\otimes_v \wt{\pi}_v'$) be an irreducible cuspidal globally $\wt{\chi}_1$-generic (resp. $\wt{\chi}_2$-generic) automorphic representation of $\G_n(\A)$. If $\wt{\pi}_v \simeq \wt{\pi}_v'$ or $\wt{\pi}_v \simeq \wt{\pi}_v' \otimes \operatorname{det}$ for almost all $v$, then $\wt{\pi}_v \simeq \wt{\pi}_v'$ or $\wt{\pi}_v \simeq \wt{\pi}_v' \otimes \operatorname{det}$ for all places $v$ of $L$.
\end{thm}
\begin{proof}
Put $\chi=\wt{\chi}_1\vert_{\U(\A)}=\wt{\chi}_2\vert_{\U(\A)}$. Let $\pi$ and $\pi'$ be irreducible cuspidal globally $\chi$-generic automorphic representations of $\H_n(\A)$ that appears in $\wt{\pi} |_{\H_n(\A)}$ and $\wt{\pi}' |_{\H_n(\A)}$, respectively (such $\pi$, $\pi'$ exist by Lemma~\ref{regen}.) The main result in \cite{CPSS11} implies that there exists the weak functorial lift $\Pi$ (resp. $\Pi'$) of $\pi$ (resp. $\pi'$) to $\GL_{2n}(\A)$. The assumption and Definition \ref{Ogamma} imply that for almost all places $v$ we have
\[
\gamma(s, \Pi_v \times \rho, \psi) = \gamma(s, \pi_v \times \rho, \psi) = \gamma(s, \wt{\pi}_v \times \rho, \psi) = \gamma(s, \wt{\pi}_v' \times \rho, \psi) = \gamma(s, \pi_v' \times \rho, \psi) = \gamma(s, \Pi_v' \times \rho, \psi)
\]
for any irreducible supercuspidal representation $\rho$ of $\GL_t(L_v)$ with $1\le t \le n$.
This implies $\Pi_v \simeq \Pi_v'$ for almost all places $v$ due to the local converse theorem for $\GL_{2n}$ (\cite{Cha19}, \cite{JL18}).

Then by the strong multiplicity one theorem for general linear groups, it follows that $\Pi \simeq \Pi'$ and thus $\Pi_v \simeq \Pi_v'$ for all places $v$ of $L$. Then applying the local-to-global argument again exactly as in the proof of \cite[Corollary 4]{CKPSS} together with \cite[Propositions 4.2 and 4.3]{CKPSS}, we have for arbitrary place $v$ of $L$,
\[
\gamma(s, \wt{\pi}_v \times \rho, \psi) = \gamma(s,\pi_v \times \rho ,\psi) = \gamma(s,\Pi_v \times \rho, \psi) = \gamma(s,\Pi_v' \times \rho, \psi) = \gamma(s,\pi_v' \times \rho, \psi) = \gamma(s, \wt{\pi}_v' \times \rho, \psi) 
\]
for any irreducible supercuspidal representation $\rho$ of $\GL_t(L_v)$ with $1\le t \le n$. By the assumption, $\wt{\pi}_v$ and $\wt{\pi}_v'$ are $\mu_{c'}^+$-generic or $\mu_{c'}^-$-generic for some $c'\in L_v^{\times}$. Theorem~\ref{m2} implies that $\wt{\pi}_v \simeq \wt{\pi}_v'$ or $\wt{\pi}_v \simeq \wt{\pi}_v' \otimes \operatorname{det}$. (Note that local functorial lifting at archimedean place is injective for generic representations.)
\end{proof}

The weak rigidity theorem for $\SO(V)$ is a consequence of the weak rigidity theorem for $\O(V)$.
\begin{cor}[Weak rigidity theorem for $\SO(V)$]\label{rSO}
Let $\chi$ be a generic character of $\U(L) \bs \U(\A)$. Let $\pi=\otimes_v \pi_v$ and $\pi'=\otimes_v \pi_v'$ be irreducible cuspidal globally $\chi$-generic automorphic representations of $\H_n(\A)$. If $[\pi_v]= [\pi_v']$ for almost all $v$, then $[\pi_v] = [\pi_v']$ for all places $v$ of $L$.
\end{cor}

\begin{proof}\label{rigidity_SO}
There are generic characters $\wt{\chi}_1,\wt{\chi}_2$ of $\wt{\U}(L) \bs \wt{\U}(\A)$ such that $\wt{\chi}_1\vert_{\U(\A)}=\wt{\chi}_2\vert_{\U(\A)}=\chi$ and globally $\wt{\chi}_1$-generic and $\wt{\chi}_2$-generic cuspidal representations $\wt{\pi},\wt{\pi}'$ of $\G_n(\A)$ such that $\pi$ and $\pi'$ appear in $\wt{\pi}\vert_{\H_n(\A)}$ and $\wt{\pi}'\vert_{\H_n(\A)}$, respectively (such $\wt{\pi}$, $\wt{\pi'}$ exist by Lemma~\ref{exgen}.)  For any finite place $v$ of $L$ such that $[\pi_v]= [\pi_v']$, we have $\Ind_{\H_n(L_v)}^{\G_n(L_v)}\pi_v\simeq \Ind_{\H_n(L_v)}^{\G_n(L_v)}\pi_v'$ by Proposition~\ref{Clifford}. Since $\wt{\pi}_v$ and $\wt{\pi}_v'$ are irreducbile constituent of $\Ind_{\H_n(L_v)}^{\G_n(L_v)}\pi_v$ and  $\Ind_{\H_n(L_v)}^{\G_n(L_v)}\pi_v'$, respectively, we see that \[\wt{\pi}_v\simeq  \wt{\pi}_v' \text{ or } \wt{\pi}_v\simeq \wt{\pi}_v'\otimes \det.\]
Therefore, since $[\pi_v]= [\pi_v']$ for almost all places of $L$, we have $\wt{\pi}_v\simeq \wt{\pi}_v' \text{ or } \wt{\pi}_v\simeq \wt{\pi}_v'\otimes \det$ for almost all places $v$ of $L$, and hence by the weak rigidity theorem for $\O(V)$, $\wt{\pi}_v\simeq \wt{\pi}_v' \text{ or } \wt{\pi}_v\simeq \wt{\pi}_v'\otimes \det$ for all places $v$ of $L$. Since $\pi_v$ and $\pi_v'$ are an irreducible constituent of the restriction of $\wt{\pi_v}$ and  $\wt{\pi_v}'$ to $\H_n(L_v)$, respectively, by Proposition~\ref{Clifford}, we conclude that $[\pi_v]= [\pi_v']$ for all places $v$ of $L$. This completes the proof.
\end{proof}

\begin{rem}
Note that there is direct proof of Theorem \ref{rigidity_O} without using the restriction method. We can directly apply the arguments of the proof of Theorem \ref{rigidity_O} to the case of $\H_n(F)$. Namely, using the existence of the weak functorial lift for $\H_n(\A)$, the multiplicity one theorem for $\GL_{2n}$, and the local-to-global argument, we can prove the weak rigidity for $\SO(V)$.
\end{rem}

\begin{appendix}
\section{Computation of the twisted Jacquet module of the Weil representation}\label{sec:tjm}
In this section, we compute the twisted Jacquet module of the Weil representation which is needed in the proof of Proposition~\ref{Theta1}.
%For an algebraic group $G$, we simply write its $F$-point $G(F)$ by $G$. 
Write $V=V_n$ and define a symplectic form $(\ , \ )$ on $V \otimes W_m$ as follows:
\[(v_1 \otimes w_1, v_2 \otimes w_2)=\la v_1,v_2 \ra_{V}\cdot \la w_1,w_2 \ra_{W_m}.\]
There is a natural embedding of $\G_n \times \J_m$ into $\Sp(V \otimes W_m)$. By pulling back the action of $\wt{\Sp}(V \otimes W_n)$ on the Schr\"odinger model of the Weil representation to $\G_n 
\times \J_m$, we have an action $\omega_{\psi,V,W_m}$ of $\G_n(F) \times \J_m(F)$ on the Schwartz-Bruhat function space $S(V \otimes Y_m^*)(F)$ on $(V \otimes Y_m^*)(F)$. To describe it, let $\P'=\M' \N'$ be a parabolic subgroup of $\J_m$ stabilizing $Y_m$ with Levi subgroup $\M'$. Then 
\[\M'\simeq \GL(Y_m) \text{ and } \N'\simeq \{\alpha \in \Hom(Y_m^*,Y_m)\ \vert \ \alpha^*=-\alpha\},\]
where $\alpha^*$ is the element in $\Hom(Y_m^*,Y_m)$ satisfying 
\[\la \alpha y_1,y_2\ra=\la y_1,\alpha^*y_2\ra, \quad \text{ for all } y_1,y_2 \in Y_m^*.\]
Let $m':\GL(Y_m) \to \M'$ be the isomorphism between $\GL(Y_m)$ and $\M'$. For $a \in \GL(Y_m)$, write $a^*$ the element in $\GL(Y_m^*)$ satisfying \[\la ay,  y^* \ra_{W_m}=\la y,a^*y^*  \ra_{W_m}, \quad  \text{ for all } y \in Y_m, y^* \in Y_m^*.\]
Then the action of $\G_n(F) \times \P'(F)$ on $\omega_{\psi,V_n,W_m}$ is described as follows:
\begin{itemize}
\item $\omega_{\psi,V,W_m}(g,1)\phi(v)=\phi(g^{-1}\cdot v)$ for $g \in \G_n(F)$,
\item $\omega_{\psi,V,W_m}\big(1,m'(a)\big)\phi(v)=\chi_V(\det(a))\cdot |\det(a)|^{n}\cdot \phi\big(a^* \cdot v\big)$ for $a \in \GL(Y_m)(F)$,
\item $\omega_{\psi,V,W_m}(1,n)\phi(v)=\psi\big(\frac{1}{2}( n \cdot v,v)\big)\phi(v)$ for $n \in \N'(F)$,
\end{itemize}
where $v \in (V \otimes Y_m^*)(F)$. \par
%\blue{need to introduce notation $M_{m \times m}$}\ \red{done!}\\

Using this action, we can prove the following proposition. 

\begin{prop}\label{cen}
Let $\wt{\pi} \in \Irr(\G_n(F))$. If $\Theta_{\psi,V,W_m}(\wt{\pi})$ is non-zero, then the central characters $w_{\wt{\pi}}$ and $w_{\theta_{\psi,V,W_m} (\wt{\pi})}$ of $\wt{\pi}$ and $\theta_{\psi,V,W_m}(\wt{\pi})$, respectively, are related as $w_{\theta_{\psi,V,W_m}(\wt{\pi})}=w_{\wt{\pi}}\cdot \chi_{V}^{m}$. 
\end{prop}
\begin{proof}
In case $\cha(F)=0$, it is stated in {\cite[Section 5.2]{GI1}}. However, since we were not able to find a reference for $\cha(F)=p\ne 2$ case, we provide the proof which works for $\cha(F)\ne 2$ cases. 

Since the small theta lift $\theta_{\psi,V,W_m}(\wt{\pi})$ of $\wt{\pi}$ is nonzero, we have $\Hom_{\G_n(F) \times \J_m(F)}(\omega_{\psi,V,W_m},\wt{\pi} \otimes \theta_{\psi,V,W_m}(\wt{\pi}) )\ne 0$. Choose a nonzero element $l \in \Hom_{\G_n(F) \times \J_m(F)}(\omega_{\psi,V,W_m},\wt{\pi} \otimes \theta_{\psi,V,W_m}(\wt{\pi}))$. Denote by $I_V$ and $I_{W_m}$ the identity element of $\G_n(F)$ and $\J_m(F)$, respectively. From the above action of $\G_n(F) \times \P'(F)$ on $\omega_{\psi,V,W_m}$, we have
\[(\omega_{\psi,V,W_m}(-I_V,-I_{W_m}) \cdot \phi)(v)=(\chi_V(-1))^{m} \cdot \phi(v), \quad \text{for } \phi \in \omega_{\psi,V,W_m}, \ v \in (V \otimes Y_m^*)(F).\]

On the other hand, for a pure tensor $f_1 \otimes f_2 \in \wt{\pi} \otimes \theta_{\psi,V,W_m}(\wt{\pi})$, $\big(\wt{\pi}\otimes \theta_{\psi,V,W_m}(\wt{\pi})\big)(-I_V,-I_{W_m})(f_1 \otimes f_2)=\omega_{\wt{\pi}}(-I_V) \cdot \omega_{\theta_{\psi,V,W_m} (\wt{\pi})}(-I_{W_m})\cdot (f_1 \otimes f_2).$ Since every element of $\wt{\pi} \otimes \theta_{\psi,V,W_m}(\wt{\pi})$ is a sum of pure tensors, for an arbitrary element $f \in \wt{\pi} \otimes \theta_{\psi,V,W_m}(\wt{\pi})$, we have \[\big(\wt{\pi}\otimes \theta_{\psi,V,W_m}(\wt{\pi})\big)(-I_V,-I_{W_m})\cdot f=\omega_{\wt{\pi}}(-I_V) \cdot \omega_{\theta_{\psi,V,W_m}(\wt{\pi})}(-I_{W_m})\cdot f.\]
Therefore, for any $\phi \in \omega_{\psi,V,W_m}$ such that $l(\phi)\ne 0$, we have
\[(\chi_V(-1))^m\cdot l(\phi)= l((-I_V,-I_{W_m})\cdot \phi)=(-I_V,-I_{W_m})\cdot l(\phi)= \omega_{\wt{\pi}}(-I_V) \cdot \omega_{\theta_{\psi,V,W_m}(\wt{\pi})}(-I_{W_m}) \cdot l(\phi).\]
Since the center of $\G_n$ and $\J_m$ are generated by $-I_V$ and $-I_{W_m}$, respectively, it completes the proof.

\end{proof}

Let $\Z'$ be the maximal unipotent subgroup of $\M'$. Using a fixed basis $\{w_1^*,\cdots,w_m^*\}$ of $Y_m^*$, we may regard $V \otimes Y_m^*$ as $V^m$.
Using the basis $\{w_1,\cdots,w_m\}$ of $Y_m$, 
we can write $\Z'$ as upper triangular matrix group
\[\mf{Z}_m= \left\{\begin{pmatrix} 1  & * & \cdots & * \\ 0 & 1 & \cdots & *\\ \vdots  & 0 & \ddots & \vdots \\ 0 & \cdots & 0 & 1 \end{pmatrix} \in \GL_m\right\}
\]
through an isomorphism $m'$. We denote by $\M_{m \times m}$ the $(m \times m)$ matrix group. Similarly using the basis $\{w_1,\cdots,w_m\}$ of $Y_m$ and $\{w_m^*,\cdots,w_1^*\}$ of $Y_m^*$, we consider $\N'$ as a subgroup $\mf{S}_m$ of $\M_{m \times m}$ and let $n':\mf{S}_m \to \N'$ be the isomorphism between $\mf{S}_m$ and $\N'$. We can then describe the action of $\Z'$ and $\N'$ on $\omega_{\psi,V,W_m}$ in terms of $\mf{Z}_m$ and $\mf{S}_m$ as follows:
\begin{align}
    \label{w1} &\big(\omega_{\psi,V,W_m}\big(1,m'(z)\big)\phi\big)(v_1,\cdots,v_m)= \phi\big( (v_1,\cdots,v_m)\cdot z\big) \text{ for } z \in \mf{Z}_m\\
\label{w2}&\big(\omega_{\psi,V,W_m}(1,n'(s))\phi\big)(v_1,\cdots,v_m)=\psi\big(
\frac{1}{2}\tr\big(Gr(\mathbf{v})\cdot s\cdot \varpi_m \big)\phi(v_1,\cdots,v_m) \text{ for } s \in \mf{S}_m,
\end{align}
where $\mathbf{v}=(v_1,v_2,\ldots,v_m)$, $Gr(\mathbf{v})=\big(\la v_i,v_j \ra_{V_m}\big)$ and $\varpi_m=\begin{pmatrix}0 & &1 \\ & \iddots & \\1 & &0 \end{pmatrix}\in \GL_m$.

Inspired by the proof of \cite[Proposition~2.4, Corollary~2.5]{GRS}, we now prove the following theorem, which is crucial when we generalize the converse proposition of \cite[Corollary~2.5]{GRS} to the case of quasi-split orthogonal groups (Proposition \ref{Theta1}). We also note that \cite[Proposition~9.2]{GS} mentions an analogous statement for metaplectic and odd orthogonal groups when $\cha(F)=0$.

\begin{thm}\label{thm:tjw1} %(cf. \cite[Proposition~2.4, Corollary~2.5]{GRS})  
Let $(\omega_{\psi,V_n,W_n})_{\U',(\mu_{-c'}')^{-1}}$ be the twisted Jacquet module of $\omega_{\psi,V_n,W_n}$ with respect to $\U'$ and $(\mu_{-c'}')^{-1}$. Then, we have
\[
(\omega_{\psi,V_n,W_n})_{\U',(\mu_{-c'}')^{-1}}\cong \ind_{\wt{\U}}^{\G_n} (\mu_{c'}^{+}).
\]
\end{thm}
\begin{proof}Put 
\[\overline{V_{c'}}=\left\{\mathbf{v}=(v_1,\cdots,v_n) \in V^n \ \vert \ Gr(\mathbf{v})=-2c'\begin{pmatrix} 0 & \cdots & 0 \\ \vdots  & \vdots & \vdots \\ 0 & \cdots & 1\end{pmatrix}\right\},
\]
where $ Gr(\mathbf{v})=\big((v_i,v_j)\big)$. We first claim that 
\[(\omega_{\psi,V_n,W_n})_{\U',(\mu_{-c'}')^{-1}}\simeq S(\overline{V_{c'}}).\]
Note that $\overline{V_{c'}}$ is a closed subset of $V^n$. Therefore, by \cite{BZ}, we have the exact sequence
\[\xymatrix{0 \ar[r] & \mc{S}(V
^n \bs \overline{V_{c'}})  \ar[r]^-{\overline{\text{i}}} & \mc{S}(V^n)  \ar[r]^-{\overline{\text{res}}} & \mc{S}(\overline{V_{c'}}) \ar[r] & 0},\]
where $\overline{\text{i}}$ is induced from the open inclusion map $i:V^n \bs \overline{V_{c'}} \to V^n$ and $\overline{\text{res}}:\mc{S}(V^n) \to \mc{S}(\overline{V_{c'}})$ is the restriction map. Let $J_{\U',(\mu_{-c'}')^{-1}}$ be the twisted Jacquet functor with respect to $\U'$ and $(\mu_{-c'}')^{-1}$. Since the functor $J_{\U',(\mu_{-c'}')^{-1}}$ is exact, we have the exact sequence
\[\xymatrix{0 \ar[r] & J_{\U',(\mu_{-c'}')^{-1}}\big(\mc{S}(V
^n \bs \overline{V_{c'}})\big)  \ar[r] & J_{\U',(\mu_{-c'}')^{-1}}\big(\mc{S}(V^n)\big)  \ar[r] & J_{\U',(\mu_{-c'}')^{-1}}\big(\mc{S}(\overline{V_{c'}})\big) \ar[r] & 0}.\]
By the definition of $\overline{V_{c'}}$, $J_{\U',(\mu_{-c'}')^{-1}}\big(\mc{S}(V^n \bs \overline{V_{c'}})\big)=0$ and $J_{\U',(\mu_{-c'}')^{-1}}\big(\mc{S}(V^n)\big)=\mc{S}(\overline{V_{c'}})$. Therefore, our first claim is proved.\par Note that there is an action of $\G_n \times \mf{Z}_n$ on $\mc{S}(\overline{V_{c'}})$ inherited from $\mc{S}(V^n)$.\par

There is a $(\G_n \times \mf{Z}_n)$-action on $\overline{V_{c'}}$ inherited from the left action of $\G_n \times \J_n$ on $V \times W$ as follows:
\[
(v_1,,\cdots,v_n)\cdot (g,z)=(g^{-1}v_1,,\cdots,g^{-1}v_n)\cdot z, \quad (v_1,\cdots,v_n) \in \overline{V_{c'}}, \ (g,z) \in \G_n \times \mf{Z}_n.
\]
From (\ref{w1}), for $z \in \mf{Z}_n$, $(v_1,,\cdots,v_n)\cdot z=(v_1,z_{12}\cdot v_1+v_2,\cdots,\sum_{i=1}^{k-1} z_{i,k}\cdot v_i +v_k,\cdots,\sum_{i=1}^{n-1} z_{i,n}\cdot v_i +v_n)$.
Therefore, if $v_{k}$ is written as a linear combination of $v_1,v_2,\cdots,v_{k-1}$, then $(v_1,\cdots, v_{k-1},v_k,v_{k+1},\cdots,v_n)$ and
$(v_1,\cdots, v_{k-1},0,v_{k+1},\cdots,v_n)$ are in the same $\mf{Z}_n$-orbit. Therefore, every $\mf{Z}_n$-orbit in $\overline{V_{c'}}$ has the representative of the form 
\[(0,\cdots,0,x_1,0,\cdots,0,x_2,0,\cdots,0,x_j,0,\cdots,0;x)\in \overline{V_{c'}} \subset  V^n,
\]
for some $1 \leq j \leq n-1$ such that $x_i \ne 0$ for all $1\le i \le j$ and for each $2\le k \le j$, $x_k$ (resp. $x$) is not expressed as a linear combination of $\{x_i\}_{1\le i <k}$ (resp. $\{x_i\}_{1\le i \le j}$). 
Furthermore, we cannot take the last element $x$ to be zero since if $(v_1, v_2, \cdots, v_n) \in \overline{V_{c'}}$ is such that $v_n$ is linear combination of $v_1, v_2, \cdots v_{n-1}$ as $v_n = \sum_{i=1}^{n-1} c_iv_i$, then $\langle v_n, v_n\rangle = \sum_{i=1}^{n-1}c_i\langle v_n, v_i\rangle=0$ and it contradicts that $(v_1, v_2, \cdots, v_n) \in \overline{V_{c'}}$. From this, we see that $\{x_1,x_2,\cdots  ,x_{j-1},x_j,x\}$ should be a linearly independent set.
\par
By Witt extension theorem, we can choose more restrictive representatives of the $(\G_n \times \mf{Z}_n)$-orbits of $\overline{V_{c'}}$ as 
\[(0,\cdots,0,e_1,0,\cdots,0,e_2,0,\cdots,0,e_j,0,\cdots,0;e)\in \overline{V_{c'}} \subset  V^n.
\]
(Here, $0\le j \le n-1$ and we set $e_0=0$.)\par
Therefore, there are finite $(\G_n \times \mf{Z}_n)$-orbits in $\overline{V_{c'}}$ and index them by $\{V_{c'}(i)\}_{1\le i \le N}$ so that $\dim(V_{c'}(i)) \le \dim(V_{c'}(j))$ for $i \le j$.

\par Note that for each $j\ge 1$, $V_{c'}(j)$ is a closed subset of $\bigcup_{i \ge j}V_{c'}(i)$ and therefore, we have the exact sequence
\begin{align} \label{ex}\xymatrix{0 \ar[r] & \mc{S}\big(\bigcup_{i \ge j+1}V_{c'}(i)\big)  \ar[r] & \mc{S}\big(\bigcup_{i \ge j}V_{c'}(i)\big)  \ar[r] & \mc{S}(V_{c'}(j)) \ar[r] & 0}.\end{align}
We claim that the Schwartz space on each orbit $V_{c'}(j)$ whose representative is of the form \[(0,\cdots,0,e_1,0,\cdots,0,e_2,0,\cdots,0,e_k,0,\cdots,0;e), \quad \text{for $k<n-1$}\] is zero.\par
Let $V_{c'}(j)$ be an orbit in $\overline{V_{c'}}$ whose representative is $\bar{\mathbf{v}}=(0,\cdots,0,e_1,0,\cdots,0,e_2,0,\cdots,0,e_k,0,\cdots,0;e)$ for some $k<n-1$. Suppose that $V_{c'}(j)$ is non-zero and put $R_{\bar{\mathbf{v}}}$ the stabilizer of $\bar{\mathbf{v}}$ in $\G_n \times \mf{Z}_n$. 
Consider a map 
\[\Phi_{\bar{\textbf{v}}} :\mc{S}(V_{c'}(j)) \to \ind_{R_{\bar{\mathbf{v}}}}^{\G_n \times \mf{Z}_n}\mathbb{I}, \ \varphi \mapsto \Phi_{\bar{\textbf{v}}}(\varphi),
\]
where $\Phi_{\bar{\textbf{v}}}$ is defined by
\[\Phi_{\bar{\textbf{v}}}(\varphi)(g,z)\coloneqq (\omega_{\psi,V,W_n}(g,z)\varphi)(\bar{\textbf{v}}).
\]
It is easy to check that $\Phi_{\bar{\textbf{v}}}$ is a $(\G_n \times \mf{Z}_n)$-isomorphism. Since $k<n-1$, there is a simple root subgroup $J$ of $\mf{Z}_n$ such that $1 \times J$ is a subgroup of $R_{\bar{\mathbf{v}}}$. However, $\mu_{c'}'$ is non-trivial on $J$ and it leads to a contradiction.\par
Therefore, by applying the exact sequence (\ref{ex}) repeatedly, we have
\[\mc{S}(\overline{V_{c'}})\simeq \mc{S}(V_{c'}(j_0)),
\]
where $V_{c'}(j_0)$ has the representative $\bar{\mathbf{v}}'=(e_1,\cdots,e_{n-1};e)$. Let $R_{\bar{\mathbf{v}}'}$ be the stabilizer of $\bar{\mathbf{v}}'$ in $\G_n \times \mf{Z}_n$. Then,

\[R_{\bar{\mathbf{v}}'}=\Biggl\{ \Biggr( \biggr( \begin{pmatrix}z &a&*&* \\ 
&1&0&*\\
&&1&a'\\&&&(z^*)^{-1}\end{pmatrix}, \e \biggr), \begin{pmatrix}z &a&& \\ 
&1&&\\
&&1&a'\\&&&(z^*)^{-1}\end{pmatrix} \Biggl) \in \G_n \times \J_n \ \vert \ \Biggl\},
\]
where we described the elements of $\G_n$ amd $\J_n$ using the basis $\{e_1,\cdots,e_{n-1},e,e',e_{n-1}^*,\cdots,e_{1}^*\}$ and $\{w_1,\cdots,w_n,w_n^*,\cdots,w_1^*\}$, respectively.

\par
Then for $\wt{u}=\biggr( \begin{pmatrix}z &a&*&* \\ 
&1&0&*\\
&&1&a'\\&&&z'\end{pmatrix}, \e \biggr) \in \wt{\U}$ and $\phi \in \mc{S}(V_{c'}(j_0))\simeq \ind_{R_{\bar{\textbf{v}}'}}^{\G_n \times \mf{Z}_n}\mathbb{I}$, we have
\[\phi(\wt{u} \cdot g,\textbf{1})=\psi(z_{1,2}+\cdots+z_{n-2,n-1}+a_{n-1})\phi(g,\textbf{1})=\mu_{c'}^{+}\cdot(\wt{u})\cdot \phi(g,\textbf{1}) \quad \text{for all } g \in \G_n.\]
(Here, we view $\phi$ as a function on $\G_n \times \mf{Z}_n$ via $\Phi_{\bar{\textbf{v}}'}$.) This proves Theorem~\ref{thm:tjw1}.
\end{proof}

\section{Non-vanishing and cuspidality of global theta lifts}\label{sec:gtl}
Let $L$ be a global field of characteristic zero or $p$ not equal to 2 and $\A$ be its adele ring. Let $(\VV_n,\la \ ,\ \ra_{\VV_n})$ be a $2n$-dimensional orthogonal space over $L$.
Let $\HH$ be the hyperbolic plane over $L$, i.e. the split symplectic space of dimension 2,
and we consider the Witt tower $\WW_k=\HH^{\oplus k}$. Write $\WW_k=Y_k \oplus Y_k^{*}$, where $Y_k$ and $Y_k^*$ are maximal isotropic subspaces of $\WW_k$ which are dual with respect to the symplectic form $\la \ ,\ \ra_{\WW_k}$ of $\WW_k$ satisfying $0=Y_0\ss Y_1 \ss \cdots \ss Y_k$ and $0=Y_0^*\ss Y_1^* \ss \cdots \ss Y_k^*$. As in the local case, we use the same symbol $\G_n$ (resp. $\J_k$) to denote the isometric group of $\VV_n$ (resp. $\WW_k$.) 

In this section, we state theorem and lemmas concerning the non-vanishing and cuspidality of global theta lifts from $\G_n(\A)$ to $\J_m(\A)$.

We can define the global Weil representation $\omega_{\psi,\VV_n,\WW_m}:=\bigotimes_v \omega_{\psi_v,\VV_{n,v},\WW_{m,v}}$ of $\G_n(\A) \times \J_m(\A)$. Then it is realized in the Schwartz-Bruhat space $S(\VV_n \otimes Y_m^*)(\A)=\bigotimes _v S(\VV_{n,v} \otimes Y_{m,v}^*)(L_v)$.
Define a symplectic form $(\ , \ )$ on $\VV_n \otimes \WW_m$ as follows;
\[(v_1 \otimes w_1, v_2 \otimes w_2)=\la v_1,v_2 \ra_{\VV_n}\cdot \la w_1,w_2 \ra_{\WW_m}.\] Let $\P'=\N'\M'$ be a parabolic subgroup of $\J_m$ stabilizing $Y_m$ with Levi subgroup $\M'$. Then 
\[\M'\simeq \GL(Y_m) \text{ and } \N'\simeq \{\alpha \in \Hom(Y_m^*,Y_m)\ \vert \ \alpha \in \Hom(Y_m^*,Y_m)\ \vert \ \alpha^*=-\alpha \},\]
where $\alpha^*$ is the element in $\Hom(Y_m^*,Y_m)$ satisfying 
\[\la ay_1,y_2\ra=\la y_1,a^*y_2\ra, \quad \text{ for all } y_1,y_2 \in Y_m^*.\]
Then from the action of the (local) Weil representation, we have the action of $\G_n(\A)\times \N'(\A)$ on $\omega_{\psi,\VV_n,\WW_m}$ as follows:
\begin{itemize}
\item $\omega_{\psi,\VV_n,\WW_m}(g,1)\phi(v)=\phi(g^{-1}\cdot v)$ for $g \in \G_n(\A)$
\item $\omega_{\psi,\VV_n,\WW_m}(1,n)\phi(v)=\psi(\frac{1}{2}(n\cdot v, v))\phi(v)$ for $n \in \N'(\A)$,
\end{itemize}
where $v \in (\VV_n \otimes Y_m^*)(\A)$. 

There is an equivariant map $\theta_{\psi,\VV_n,\WW_m}:S((\VV_n \otimes Y_m^*)(\A)) \to \mc{A}(\G_n \times \J_m)$ given by the theta series
\[\theta_{\psi,\VV_n,\WW_m}(\phi)(g,h):=\sum_{y \in (\VV_n \otimes Y_m^*)(L)}\omega_{\psi,\VV_n,\WW_m}(\phi)(g,h)(y).
\]
For an automorphic form $f$ of $\G_n(\A)$, put \[\theta_{\psi,\VV_n,\WW_m}(\phi,f)(h)=\int_{\G_n(L) \bs \G_n(\A)} \theta_{\psi,\VV_n,\WW_m}(\phi;g,h)\overline{f(g)}dg\]
and for an automorphic representation $\pi$ of $\G_n(\A)$, write $\Theta_{\psi,\VV_n,\WW_m}(\pi)=\{\theta_{\psi,\VV_n,\WW_m}(\phi,f) \ \vert \ \phi \in \omega_{\psi,\VV_n,\WW_m}, \ f\in \pi \}$. Then $\Theta_{\psi,\VV_n,\WW_m}(\pi)$ is an automorphic representation of $\J_m(\A)$.

\begin{thm}\label{a1} Let $\pi$ be an irreducible cuspidal representation of $\G_n(\A)$. Then there is a positive integer $t$ such that $\Theta_{\psi,\VV_n,\WW_i}(\pi)=0$ for all $0 \le i <t$ and $\Theta_{\psi,\VV_n,\WW_t}(\pi)\ne 0$. Furthermore, $\Theta_{\psi,\VV_n,\WW_t}(\pi)$ is cuspidal. 
\end{thm}

We believe that the above theorem is almost certainly well known to experts. In particular, it
should be noted that \cite{Ra84} deals with a similar statement. However, we are not able to find a reference for positive characteristic cases and therefore provide a proof for completeness. We hope this proof will be a useful reference for readers interested in positive characteristic cases.

To prove this, we need two lemmas.
\begin{lem}\label{l1}Let $f \in \mc{A}_{cusp}(\G_n)$. Let $\P_k'=\N_k'\M_k'$ be a parabolic subgroup of $\J_m$ stabilizing $Y_k$ so that $\M_k' \simeq \GL(Y_k) \times \J_{m-k}$. Then for all $h \in \J_{m-k}(\A)$,
\[\int_{\N_k'(K) \bs \N_k'(\A)}\theta_{\psi,\VV_n,\WW_{m}}(\phi,f)(nh)dn=\theta_{\psi,\VV_n,\WW_{m-k}}(\wt{\phi},f)(h),
\]
where $\wt{\phi}$ is the restriction of $\phi$ via the natural inclusion map $\VV_n \otimes Y_{m-k}^* \hookrightarrow \VV_n \otimes Y_{m}^*$.

\end{lem}
\begin{proof} For the case of $\cha(L)=0$, it is proved in \cite[Theorem I.1.1]{Ra84}. Since the proof for the case of $\cha(L)=p$ is the same with that of $\cha(L)=0$, we omit it.
\end{proof}

\begin{lem}\label{l2}Let $f \in \mc{A}_{cusp}(\G_n)$. If \[\int_{\G_n(L) \bs \G_n(\A)} \theta_{\psi,\VV_{n},\WW_{2n}}(\phi;g,h)\overline{f(g)}dg=0\]
for all $\phi \in \omega$, then $f=0$.
\end{lem}

\begin{proof}The proof for the case $\mathrm{char}(L)=0$ is implicitly included in \cite[Theorem I.2.1]{Ra84}. To demonstrate that the argument also holds for $\mathrm{char}(L)=p$, we provide a detailed proof. We do not claim originality for this proof.

%Write $\WW_n=X \oplus Y$, where $X$ and $Y$ are maximal isotropic subspaces of $\WW_n$ which are dual with respect to $\la \ ,\ \ra_{\WW_n}$. Let $P=NM$ be a parabolic subgroup of $\J_m$ stabilizing $X$. Then $M\simeq \GL(X)$ and $N\simeq \{\alpha \in \Hom(Y,X)\ \vert \ \la y,\alpha(y)\ra_{\WW_n}=0 \text{ for all } y\in Y\}$. 

Put $\mf{S}:=\{A \in M_{2n \times 2n}\ \vert \ A^t=-A \}$. Using bases $\beta=\{f_1,\cdots ,f_{2n}\}$ of $Y_{2n}$ and $\beta^*=\{f_1^*,\cdots ,f_{2n}^*\}$ of $Y_{2n}^*$ satisfying $\la f_i , f_j^* \ra_{\WW_{2n}}=\delta_{ij}$, it is easy to see that
\[  \{\alpha \in \Hom(Y_{2n}^*,Y_{2n})\ \vert \ \alpha^*=-\alpha \}\simeq \mf{S}.\] Let $n':\mf{S} \rightarrow \N'$ be the isomorphism between $\mf{S}$ and $\N'$ and write $J=(\la e_i,e_j\ra_{\VV_n})$ for some basis $\alpha=\{e_1,\cdots,e_{2n}\}$ of $\VV_n$. %\brown{←check.}

For a function $\varphi$ of $\J_{2n}(\A)$, define its Fourier coefficient $\varphi^{\N',J}$ with respect to $(\N',J)$ by
\[
\int_{\mf{S}(L)\bs \mf{S}(\A)} \varphi(n'(s)h)\psi(-\frac{1}{2}\tr(Js))ds,
\]
where $ds$ is the Haar measure of $\mf{S}(\A)$ such that $\text{Vol}(\mf{S}(L)\bs \mf{S}(\A))=1$.
Note that 
\begin{align*}\theta_{\psi,\VV_n,\WW_{2n}}(\phi)^{\N',J}(g,h)&=\int_{\mf{S}(L)\bs \mf{S}(\A)}\theta_{\psi,\VV_n,\WW_{2n}}(\phi)(g,n'(s)h)\psi(-\frac{1}{2}\tr(Js))ds\\
&=\int_{\mf{S}(L)\bs \mf{S}(\A)}\sum_{y^* \in (\VV_n \otimes Y_{2n}^{*})(L)}\omega_{\psi,\VV_n,\WW_m}(g,n'(s)h)\phi(y^*)\psi(-\frac{1}{2}\tr(Js))ds\\
&=\int_{\mf{S}(L)\bs \mf{S}(\A)}\sum_{y^* \in (\VV_n \otimes Y_{2n}^{*})(L)}\omega_{\psi,\VV_n,\WW_m}(g,h)\phi(y^*)\psi(\frac{1}{2}(n'(s)y^*,y^*))\psi(-\frac{1}{2}\tr(Js))ds.\\
\end{align*}
Using the basis $\beta^*$ of $Y_{2n}^*$ and the basis $\alpha$ of $\VV_n$, we view $y^* \in (\VV_n \otimes Y_{2n}^*)$ as a $(2n \times 2n)$ matrix. Note that $(n'(s)y^*,y^*)=\tr((y^*)^t Jy^*s)$. Therefore, \[
\int_{\mf{S}(L)\bs \mf{S}(\A)}\psi(\frac{1}{2}(n'(s)y^*,y^*))\psi(-\frac{1}{2}\tr(Js))ds=\begin{cases}1 \quad \text{ if } (y^*)^tJy^*=J\\ 0 \quad \text{ otherwise}\end{cases}.
\]
Since $\G_n=\{y^* \in \VV_n \otimes Y_{2n}^* \ \vert \ (y^*)^tJy^*=J\}$, we have 
\[\theta_{\psi,\VV_n,\WW_{2n}}(\phi)^{\N',J}(g,h)=\sum_{y^* \in \G_n(L)}\omega(g,h)\phi(y^*).
\]

Denote by $\mc{S}(\G_n(\AA))$ (resp. $\mc{S}(\G_n(L) \bs \G_n(\A))$) the Schwartz-Bruhat space on $\G_n(\AA)$ (resp. $\G_n(L) \bs \G_n(\A)$.) Because the restriction map $S(\VV_n \otimes Y_{2n}^*) \mapsto \mc{S}(\G_n(\AA))$ is surjective and every function in $\mc{S}(\G_n(L) \bs \G_n(\A))$ is obtained by averaging a function in $\mc{S}(\G_n(\AA))$, we see that $\{\theta_{\psi,\VV_n,\WW_{2n}}(\phi)^{\N',J}(\cdot,1)\}_{\phi \in S(\VV_n \otimes Y_{2n}^*)}$ forms $\mc{S}(\G_n(L) \bs \G_n(\A))$.

By the assumption, we have $\la \theta_{\psi,\VV_n,\WW_{2n}}(\phi)^{\N',J}(\cdot,1) ,  f\ra_{L^2(\G_n(L)\bs \G_n(\A))}=0.$ Since $\mc{S}(\G_n(L) \bs \G_n(\A))$ is a dense subspace of $L
^2(\G_n(L) \bs \G_n(\A))$, it follows that $f=0$.
\end{proof}
Now we can prove Theorem \ref{a1}.

\begin{proof}Lemma \ref{l2} tells us that $\Theta_{\psi,\VV_n,\WW_{2n}}(\pi)\ne 0$ and this proves the first statement. The second statement follows from Lemma \ref{l1}.
\end{proof}
\end{appendix}

\subsection*{Acknowledgements}
This paper builds upon the work of Jo and Zhang, for which we are grateful. We also extend our appreciation to Atobe, Gan, Ichino, and Kaplan for providing the tools utilized in this paper. Special thanks to Kaplan for his assistance in the proof of Lemma~\ref{conj} and kind suggestion to add the case $\cha(F)=p$ in our earlier draft, which dealt only $\cha(F)=0$ case. We also thank to Atobe for his very helpful comments.
%Being evident to the reader, this paper owes much to Jo and Zhang's works. Taking this advantage, we thank them for having done such indispensable pre tasks for this work. We also thank Atobe, Gan, Ichino and Kaplan for providing all the necessary tools we use in this paper. In particular, we thank Kaplan for his help in the proof of Lemma~\ref{conj}. \par

The first author expresses deep gratitude to Dongho Byeon, Youn-Seo Choi, Youngju Choie, Wee Teck Gan, Haseo Ki, Sug Woo Shin and Shunsuke Yamana for their constant support and encouragement. He is also grateful to Yonsei university for providing a wonderful place to conduct research and its generous support. The first author has been supported by the National Research Foundation of Korea (NRF) grant funded by the Korea government (MSIT) (No. 2020R1F1A1A01048645). The second author is grateful to Purdue University for providing excellent working conditions during a one-year research visit (July 2022 - July 2023). The second author has been supported by the National Research Foundation of Korea (NRF) grant funded by the Korea government (MSIP) (No. RS-2022-0016551 and  No. RS-2024-00415601 (G-BRL)) and by Chonnam National University (Grant number: 2022-0123). The first and third authors have been supported by NRF grant (No. RS-2023-00237811).

Lastly, we are very grateful to the anonymous referee for his/her thorough review and invaluable comments, which undoubtedly improved the clarity of the paper's presentation.

\providecommand{\bysame}{\leavevmode\hbox to3em{\hrulefill}\thinspace}

\end{document}